\documentclass[11pt,a4paper]{amsart}

\usepackage[margin=1in]{geometry}
\usepackage{amsmath,amssymb,amsthm,mathtools}
\usepackage{enumitem}
\usepackage{hyperref}
\usepackage{xcolor}
\usepackage{microtype}
\usepackage{comment}
\usepackage{mathrsfs}
\usepackage{soul}
\usepackage{graphicx}
\newcommand{\Schw}{\mathcal{S}}

\numberwithin{equation}{section}

\newcommand{\la}{\ensuremath{\langle}}
\newcommand{\ra}{\ensuremath{\rangle}}

\hypersetup{
    colorlinks=true,
    linkcolor=black,
    citecolor=black,
    urlcolor=blue
}

\setlist[itemize]{leftmargin=2em}
\setlist[enumerate]{leftmargin=2em}

\newcommand{\R}{\mathbb{R}}

\newcommand{\eps}{\varepsilon}

\newcommand{\norm}[2][]{\left\lVert #2\right\rVert_{#1}}

\newcommand\res{\mathrm{res}}
\newcommand\nr{\mathrm{nr}}
\newcommand\Id{\operatorname{Id}}
\newcommand\rcomp{\overline{\mathbb{R}^{3,1}}}
\newcommand\ang[1]{\langle #1 \rangle}
\newcommand\bang[1]{\big\langle #1 \big\rangle}
\newcommand\xib{{\overline{\xi}}}

\newcommand\mc[1]{\mathcal{#1}}

\DeclareMathOperator{\supp}{supp}

\theoremstyle{plain}
\newtheorem{theorem}{Theorem}[section]
\newtheorem{proposition}[theorem]{Proposition}
\newtheorem{lemma}[theorem]{Lemma}

\newtheorem{corollary}[theorem]{Corollary}
\newtheorem{conjecture}[theorem]{Conjecture}

\theoremstyle{definition}
\newtheorem{remark}[theorem]{Remark}
\newtheorem{definition}[theorem]{Definition}

\title[Scattering in spacetime cones]{Scattering in spacetime cones for the nonlinear Schr\"odinger equation}

\author{Andrew Hassell}
\email{Andrew.Hassell@anu.edu.au}
\author{Qiuye Jia}
\email{Qiuye.Jia@anu.edu.au}
\author{Xiaoxu Wu}
\email{Xiaoxu.Wu@anu.edu.au}
\address{Mathematical Sciences Institute, Australian National University, Canberra, ACT 2600 AUSTRALIA}
\thanks{The authors gratefully acknowledge support from the Australian Research Council via Laureate Fellowship grant FL220100072. The authors also acknowledge the use of AI to assist with language editing and phrasing refinement during the initial stages (Claude, Codex) and checking accuracy during the final stages (ChatGPT)}
\keywords{Nonlinear Schr\"odinger equation, soliton resolution conjecture, scattering in spacetime cones}

\begin{document}

\begin{abstract}
    We consider the nonlinear Schr\"odinger equation on $\R^{3,1}$, with a class of `mixed' nonlinearities that admit soliton solutions. We are interested in the long-time behaviour of solutions with finite, but possibly large, mass and/or energy. By a spacetime cone, we mean an open set in $\R^{3,1}_{x,t}$ that is invariant under dilation $(x, t) \mapsto (\alpha x, \alpha t)$ for all $\alpha > 0$.
    Our main result is as follows: let $u(x, t)$ be the solution with initial condition in $H^2(\R^3)$. Let $\mathscr{M}, \mathscr{E}$ be the mass and energy of $u$, and let $\eta = \eta(\mathscr{M}, \mathscr{E}) > 0$ be sufficiently small. Let $C_\epsilon$ be the complement of $N \geq 1$ of sufficiently thin spacetime cones $\{ (x, t) \mid |x-v_jt| \leq \epsilon t \}$, where $v_1, \dots, v_N$ are fixed velocities and $\epsilon$ is sufficiently small. If the $L^\infty$ norm of $u$ in $C_\epsilon$ is $\leq \eta$, we prove that $u$ scatters in $C_{2\epsilon} \subset C_\epsilon$. 
\end{abstract}

\maketitle

\tableofcontents

\section{Introduction}

Consider the nonlinear Schr\"odinger equation on $\R^{3,1}$:
\begin{equation}\label{eq:nls}
(D_t + \Delta) u + \mc{N}(|u|) u = 0, 
\end{equation}
where $D_t = -i \partial_t$ and $\Delta = -\sum_{j=1}^3 \partial^2_{x_j}$ is the positive Laplacian in $\R^3$. We shall specify the properties of the nonlinearity $\mc{N}(|u|)$ shortly, but for now we mention that it is energy-subcritical and of `mixed' type: negative (focusing) for small $|u|$, and positive (defocusing) for large $|u|$. In particular, the equation \eqref{eq:nls} will satisfy global well-posedness in $H^1(\R^3)$, and admit soliton solutions. See \eqref{eq:Ncondition 1} --- \eqref{eq:Ncondition 3} for the precise assumptions on the nonlinearity. 

We consider solutions with finite mass
\begin{equation}\label{eq:mass}
\mathscr{M} := \int |u(x,t)|^2 \, dx
\end{equation}
and energy 
\begin{equation}\label{eq:energy}
\mathscr{E} := \int \Big( \frac{|\nabla u(x,t)|^2}{2} + \mc{E}(|u(x,t)|) \Big) \, dx, \qquad \mc{E}(b) = \int_0^b a \, \mc{N}(a) \, da;
\end{equation}
these are both conserved quantities, i.e. are constant in time for all $t \in \R$. 

\subsection{Scattering in spacetime cones}

We consider the scattering properties of such solutions \emph{locally in spacetime cones}. By a spacetime cone we mean a measurable subset of $\R^{3,1}$ invariant under dilation by positive numbers $\alpha$, namely under the map $(x,t) \mapsto (\alpha x, \alpha t)$. It is convenient to view this on the \emph{radial compactification of spacetime}, which consists of the union of $\R^{3,1}$ with a three-sphere `at infinity', one point for each ray in $\R^{3,1}$ through the origin. We denote this radial compactification by $\rcomp$. It is diffeomorphic to a 4-dimensional ball. It is customary to represent it with the time axis vertical. Then the boundary of $\rcomp$, which we will often call `spacetime infinity', consists of two open hemispheres, the `northern hemisphere', where $t = +\infty$, and the `southern hemisphere', where $t = -\infty$. Between them lies the `equator', which is the intersection between the boundary and any time-slice $\{ t = t_0 \}$, $t_0 \in \R$. See Figure~\ref{fig:radial compactification}. In this picture, there is a one-to-one correspondence between spacetime cones and measurable subsets of spacetime infinity. 

\begin{figure}[htbp]
    \centering 
    \includegraphics[width=0.7\textwidth]{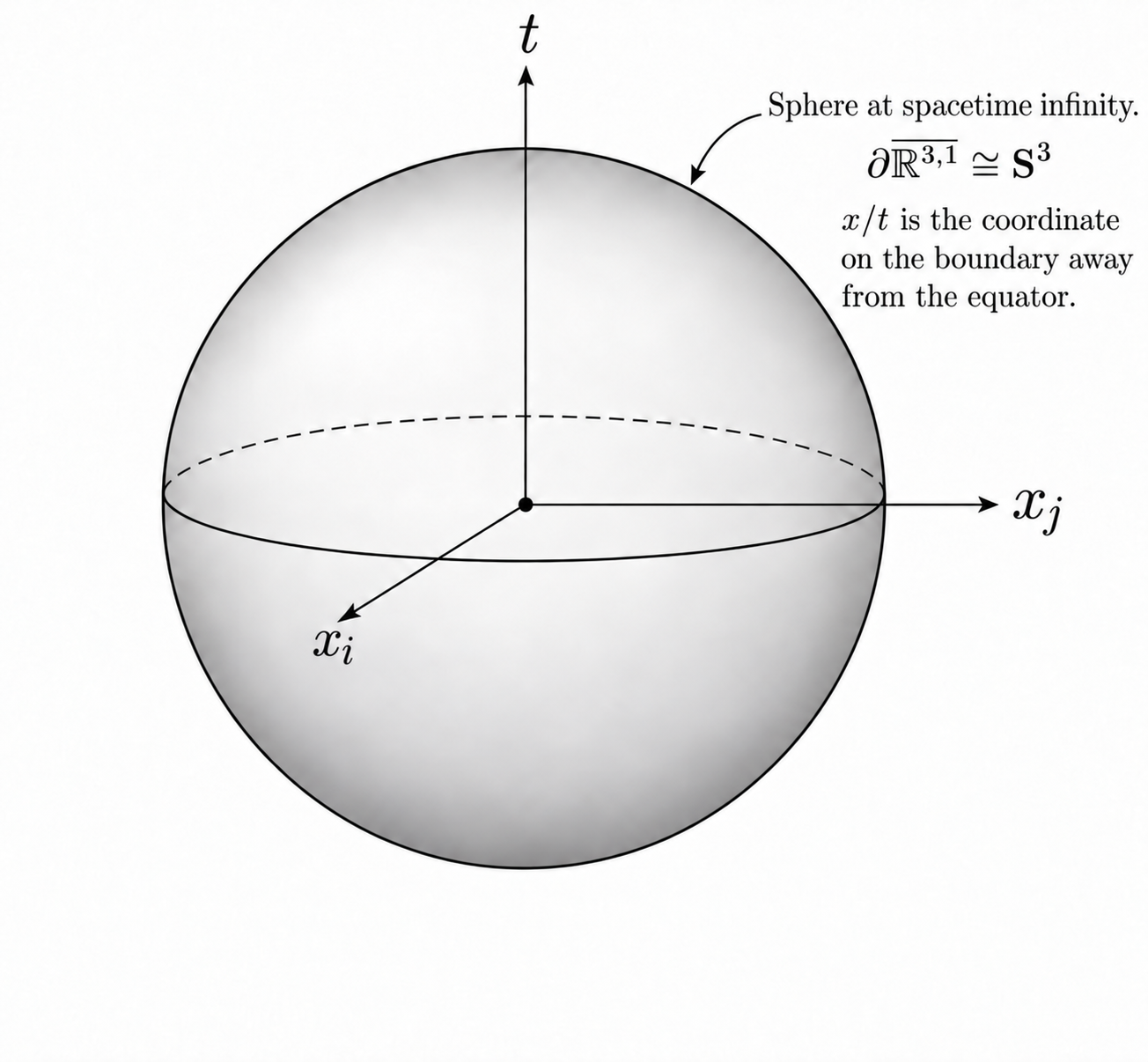} 
    \caption{The radial compactification of spacetime.}
    \label{fig:radial compactification} 
\end{figure}

The ratio $x/t$ is a smooth coordinate on both the northern and southern hemispheres, tending to infinity at the equator which is the boundary of each hemisphere. We think of this coordinate as an asymptotic velocity: any particle travelling with an asymptotic velocity $v$ will end up at position $v$ on the northern hemisphere, as $t \to +\infty$. Alternatively we can think of this as a `pseudoconformal' coordinate: the pseudoconformal transform of a \emph{scattering} solution $u(x,t)$ will have a continuous extension (in a suitable sense) to the boundary of spacetime. 

We now define the notion of scattering in a spacetime cone. 
\begin{definition}[Scattering in a spacetime cone]
Let $C$ be a spacetime cone, contained in $\{ t \geq 0 \}$.  
We say that a function $u(x, t)$ \emph{scatters} in $C$ (forward in time) if there exists $w \in L^2(\R^3)$ (possibly depending on $C$) such that 
\begin{equation}\label{eq:cone scattering defn} 
\lim_{t \to \infty} \| u(t) - e^{-it\Delta} w \|_{L^2(C_t)} = 0 
\end{equation}
where $C_{t} = \{ x \mid (x, t) \in C \} \subset \R^3$ is the time slice of the cone $C$ at time $t$. 
\end{definition}\label{def:cone scattering}

\begin{remark}\label{rem:cone scattering}
Although not immediately apparent from this definition (since $w$ is allowed to depend on $C$), if $u$ scatters in two open cones $C_1, C_2$, then it scatters in the union $C_1 \cup C_2$ of these cones --- see Lemma~\ref{lem:union of cones}.
\end{remark}

We will define the `thin cone of width $\epsilon$ in direction $v$' to be the spacetime cone
 \begin{equation}\label{eq:thin cone}
C_{v, \epsilon} =  \{ (x, t) \mid t > 0, \ |x-vt| \leq \epsilon t \}.
 \end{equation}
Our main result, Theorem~\ref{thm:main}, concerns scattering in a spacetime cone $C^\circ$ that consists of the complement of finitely many thin spacetime cones in the sense of \eqref{eq:thin cone}. It states that if we assume \emph{a priori} that $u$ is small in $L^\infty(C^\circ)$, then $u$ scatters in a slightly smaller $C \subset C^\circ$. This can be viewed as a localization of a well-known result of Rodnianski and Schlag to certain spacetime cones. Let us recall this result:

\begin{theorem}[A modified version of \cite{Rodnianski-Schalg-Strichartz}, Lemma 5.2]\label{thm:RS} Consider the linear Schr\"odinger equation with time-dependent potential:
\begin{equation}
    (D_t + \Delta + V(x,t))u(x,t) = 0, \quad u(0) = \phi \in L^2. 
\end{equation}
If $\| V \|_{L^\infty_t L^{3/2}_x}$ is sufficiently small, then $u$ satisfies the endpoint Strichartz estimate $\| u \|_{L^2_t L^6_x} < C \| \phi \|_{L^2_x}$, and hence $u$ scatters, in the sense that it is asymptotic to a free solution $e^{-it\Delta} \phi_+$, as $t \to \infty$. 
\end{theorem}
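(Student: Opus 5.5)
The proof is a perturbative argument built on the endpoint Strichartz estimate for the free Schr\"odinger group (Keel--Tao): for $F\in L^2_tL^{6/5}_x$,
\[
\|e^{-it\Delta}\phi\|_{L^2_tL^6_x}\lesssim \|\phi\|_{L^2},\qquad
\Big\|\int_0^t e^{-i(t-s)\Delta}F(s)\,ds\Big\|_{L^\infty_tL^2_x\cap L^2_tL^6_x}\lesssim \|F\|_{L^2_tL^{6/5}_x}.
\]
We treat $Vu$ as a source term. By H\"older in $x$ with $\tfrac{5}{6}=\tfrac{2}{3}+\tfrac16$, we have $\|V(t)u(t)\|_{L^{6/5}}\le \|V(t)\|_{L^{3/2}}\|u(t)\|_{L^6}$. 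Squaring and integrating in $t$ gives
\[
\|Vu\|_{L^2_tL^{6/5}_x}\le \|V\|_{L^\infty_tL^{3/2}_x}\|u\|_{L^2_tL^6_x}.
\]

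First, work on a finite interval $[0,T]$ and write Duhamel's formula $u(t)=e^{-it\Delta}\phi - i\int_0^t e^{-i(t-s)\Delta}V(s)u(s)\,ds$, with signs fixed by $D_t=-i\partial_t$. The Strichartz estimates then give
\[
\|u\|_{X_T}\le C_0\|\phi\|_{L^2}+C_0\|V\|_{L^\infty_tL^{3/2}_x}\|u\|_{X_T},\qquad X_T=L^\infty_{[0,T]}L^2\cap L^2_{[0,T]}L^6.
\]
If $C_0\|V\|_{L^\infty_tL^{3/2}_x}\le \tfrac12$, the last term is absorbed and $\|u\|_{X_T}\le 2C_0\|\phi\|_{L^2}$ uniformly in $T$. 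Letting $T\to\infty$ yields the global bound $\|u\|_{L^2_tL^6_x}<C\|\phi\|_{L^2}$. Absorption requires $\|u\|_{X_T}<\infty$ a priori. This holds if we first prove existence in $X_T$ by a contraction on $[0,T]$, where the same estimate makes the Duhamel map a contraction for small $V$, and identify $u$ with that solution by uniqueness. Alternatively, we regularise $\phi$.

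Scattering follows by a Cauchy argument. Set $\phi_+ := \phi - i\int_0^\infty e^{is\Delta}V(s)u(s)\,ds$. This integral converges in $L^2$: by the dual homogeneous estimate,
\[
\Big\|\int_{t_1}^{t_2}e^{is\Delta}Vu\,ds\Big\|_{L^2}\lesssim \|Vu\|_{L^2_{[t_1,t_2]}L^{6/5}_x}\le \|V\|_{L^\infty_tL^{3/2}_x}\|u\|_{L^2_{[t_1,t_2]}L^6_x},
\]
and the right side tends to $0$ as $t_1,t_2\to\infty$ because $u\in L^2_tL^6_x$. Unitarity of $e^{-it\Delta}$ then gives
\[
\|u(t)-e^{-it\Delta}\phi_+\|_{L^2}\lesssim \|u\|_{L^2_{[t,\infty)}L^6_x}\to0 .
\]

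The main obstacle is the endpoint case: the inhomogeneous endpoint $L^2_tL^{6/5}\to L^2_tL^6$ estimate is exactly Keel--Tao's, and we cite it rather than reprove it. The fragile points are the a priori finiteness in the absorption step and the justification of the Duhamel formula for $L^\infty_tL^{3/2}_x$ potentials. For the latter we work with the integral equation as the definition of the solution.
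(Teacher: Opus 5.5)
Your proposal is correct and follows the paper's proof essentially step for step: Duhamel, the inhomogeneous endpoint Strichartz estimate and H\"older to absorb the $Vu$ term on $[0,T]$, then convergence of $\int_0^t e^{is\Delta}V(s)u(s)\,ds$ via the dual homogeneous estimate. Your point that absorption needs $\|u\|_{L^2_{[0,T]}L^6_x}<\infty$ a priori, which you settle by the contraction argument and by taking the integral equation as the definition of solution, is something the paper passes over silently, and it is a worthwhile addition.
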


In our setting, the nonlinear potential $\mc{N}(|u|)$ will play the role of $V$, and smallness of $|u|$ will easily imply smallness of $V$ in $L^\infty_t L^{3/2}_x$ given our assumptions \eqref{eq:Ncondition 1} --- \eqref{eq:Ncondition 3} on $\mc{N}$. 
We also remark that in \cite{Rodnianski-Schalg-Strichartz}, additional assumptions on the potential are made. However, for Theorem~\ref{thm:RS}, only the smallness of $V$ in $L^\infty_t L^{3/2}_x$ is needed. In fact, the proof is so straightforward that we recall it here. 

\begin{proof}
By Duhamel's principle,
\begin{equation}\label{eq:Duhamel}
    \begin{aligned}
    u(t)=& e^{-it\Delta}u(0)+(-i)\int_0^t e^{-i(t-s)\Delta}V(s)u(s) ds.
    \end{aligned}
\end{equation}
Using the inhomogeneous Strichartz estimate, we find 
\begin{equation}
    \begin{aligned}
       \|\int_0^t e^{-i(t-s)\Delta}V(s)u(s) ds\|_{L^2_t([0,T))L^6_x} \leq &  C \| V(x,s) u(x, s) \|_{L^2_s([0,T)) L^{6/5}_x} \\ 
       \leq & C \| V(x,s)\|_{L^\infty_sL^{3/2}_x}\|u(x,s)\|_{L^2_s((0,T])L^6_x}.
    \end{aligned}
\end{equation}
This yields 
\begin{equation}
\|u(x,t)\|_{L^2_t((0,T])L^6_x}\leq C_1 \|u(0)\|_{L^2_x}+ C\| V(x,t)\|_{L^\infty_tL^{3/2}_x}\|u(x,t)\|_{L^2_t((0,T])L^6_x}.
\end{equation}
Provided $C \| V(x,t)\|_{L^\infty_tL^{3/2}_x} < 1$, we obtain  
\begin{equation}
\|u(x,t)\|_{L^2_t((0,T])L^6_x}\leq C_0\|u(0)\|_{L^2_x}
\end{equation}
for some fixed constant $C_0>0$. We then send $T\to \infty$ and yield 
\begin{equation}
\|u(x,t)\|_{L^2_t(\mathbb R^+)L^6_x}\leq C_0\|u(0)\|_{L^2_x},
\end{equation}
and similarly, 
\begin{equation}
\|u(x,t)\|_{L^2_t(\mathbb R^-)L^6_x}\leq C_0\|u(0)\|_{L^2_x}.
\end{equation}

To show that $u$ scatters, we need to show that $e^{it\Delta} u(x,t)$ converges as $t \to \infty$. Using \eqref{eq:Duhamel} it is enough to show that 
$$
\int_0^t e^{is\Delta}V(s)u(s) ds
$$
converges as $t \to \infty$. This follows from the adjoint Strichartz estimate and $Vu \in L^2_s L^{6/5}_x$, since then the tail 
$$
\int_{t_1}^{t_2} e^{is\Delta}V(s)u(s) ds
$$
converges to zero in $L^2$ as $t_1, t_2 \to \infty$. 

\end{proof}

\subsection{Main result}
Before stating the main result, we must specify the class of nonlinearities $\mc{N}(|u|)$. We assume the following:
\begin{equation}\label{eq:Ncondition 1}\begin{gathered}
    \text{$\mc{N}(|u|)$ is a $C^1$ function of $|u|^2$, which vanishes when $u=0$. In particular,} \\
    \text{$\mc{N}(|u|) = O(|u|^2)$ as $|u| \to 0$.}
\end{gathered}\end{equation}
\begin{equation}\begin{gathered}
    \text{There exists $b > 0$ such that $\mc{N}$ is focusing (that is, nonpositive) for $|u| \leq b$, } \\
    \text{and there exists $a_0\in(0,b)$ such that $\mathcal{N}(a_0)<0$,}\\
    \text { while $\mc{N}$ is defocusing (that is, nonnegative) for $|u| \geq b$. }
\end{gathered}\label{eq:Ncondition 2}\end{equation}
\begin{equation}\label{eq:Ncondition 3}
    \text{There exists $\kappa \in (0,1)$ such that $\mc{N}(|u|)$ is $O(|u|^{4-2\kappa})$ and $\mc{N}'(|u|)$ is $O(|u|^{3-2\kappa})$ as $|u| \to \infty$.}
\end{equation}

A good example to keep in mind is 
\begin{equation}\label{eq:exN}
 \mc{N}(|u|) = \frac{-|u|^2 + |u|^4}{(1 + |u|^2)^\kappa},      \end{equation}
for $0 < \kappa < 1$. The positivity of $\kappa$ ensures that this equation is energy-subcritical. When $\kappa = 0$ one has the cubic-quintic NLS studied by Killip, Oh, Pocovnicu and Visan \cite{Killip-Oh-Pocovnicu-Visan-Cubic-quintic}. Unfortunately, when $\kappa = 0$ in \eqref{eq:exN} we were not able to prove the uniform $L^\infty$ estimate in Proposition~\ref{prop:Linfty}. If one could prove such a uniform estimate, then the rest of our analysis would go through for the cubic-quintic equation. 

Our main result states that, for suitable spacetime cones, a small $L^\infty$ estimate implies scattering:

\begin{theorem}\label{thm:main}
 Let $\mc{N} = \mc{N}(|u|)$ be a nonlinearity satisfying \eqref{eq:Ncondition 1} -- \eqref{eq:Ncondition 3}. Let $u$ be a solution to the nonlinear Schr\"odinger equation \eqref{eq:nls} with $H^2(\R^3)$ initial data. Fix $N$ directions $v_1, \dots, v_N$, i.e. $N$ points at spacetime infinity, let $\epsilon > 0$ be sufficiently small, and define $C^\circ_\epsilon$ to be the complement of $N$ thin spacetime cones with direction $v_j$ and width $\epsilon$, as in \eqref{eq:thin cone}. 
Assume that $u$ is sufficiently small in $L^\infty$ in $C^\circ_\epsilon$, for all sufficiently large times:
 \begin{equation}\label{eq:cone condition 2}
 \| u \|_{L^\infty(C^\circ_\epsilon \cap \{ t \geq T \})} \leq \eta,
\end{equation}
where $\eta$ is sufficiently small depending on $u(0)$.

Then,  $u$ scatters in the (slightly smaller) spacetime cone $C^\circ_{2\epsilon}$, in the sense of Definition~\ref{def:cone scattering}. 
\end{theorem}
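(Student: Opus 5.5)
Following the template of Theorem~\ref{thm:RS}, I plan to localize to the good region. Fix a smooth function $\chi(x,t)=\chi_0(x/t)$, homogeneous of degree zero for $t \geq 1$, equal to $1$ on $C^\circ_{2\epsilon}$ and supported in $C^\circ_{3\epsilon/2}$ (for $t\geq T$). Define $w=\chi u$. Then
\[
(D_t+\Delta)w = -\chi\,\mc{N}(|u|)u + [D_t+\Delta,\chi]u .
\]
The first term is $V w$ with $V=\mc{N}(|u|)\tilde\chi$, where $\tilde\chi$ is a slightly larger cutoff in $C^\circ_\epsilon$. The commutator $[D_t+\Delta,\chi]u$ consists of terms like $t^{-1}(\nabla\chi_0)(x/t)\cdot\nabla u$, $t^{-2}(\Delta\chi_0)u$, and $-i\partial_t\chi = i t^{-1}(x/t)\cdot\nabla\chi_0\, u$. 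Since $\nabla\chi_0$ is supported in the annular transition region $\epsilon\leq |x/t-v_j|\leq 2\epsilon$ (and near the equator), and $x/t$ is bounded there, the key combination is
\[
t^{-1}\nabla\chi_0\cdot\big(\nabla u - i\tfrac{x}{2t}u\big)\quad\text{(up to conventions)},
\]
i.e. the pseudoconformal derivative $J u = (x+2it\nabla)u$ divided by $t^2$. The equator part can be handled by choosing $\chi_0$ to be $1$ near $|x/t|=\infty$, so it contributes nothing there.

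The steps, in order, are as follows.

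\textbf{(1) Smallness of $V$.} By \eqref{eq:Ncondition 1}, $|V|\lesssim |u|^2$ wherever $|u|\leq\eta$. So $\|V(t)\|_{L^{3/2}}\lesssim \eta^{2/3}\|u\|_{L^\infty}^{?}\ldots$; more concretely, $\|V\|_{L^{3/2}}\le \| |u|^2\|_{L^{3/2}}\le \eta^{2/3}\|u\|_{L^2}^{4/3}\lesssim \eta^{2/3}\mathscr{M}^{2/3}$. This is small provided $\eta$ is small depending on $\mathscr{M}$, so the Rodnianski–Schlag absorption argument applies to $w$ on $[T,\infty)$.

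\textbf{(2) Forcing term.} I need the commutator term $F$ to lie in a dual Strichartz space, e.g. $L^1_tL^2_x$ or $L^2_tL^{6/5}_x$ on $[T,\infty)$. Its size is roughly $t^{-2}\|Ju\|_{L^2(\text{transition})}+t^{-2}\|u\|_{L^2}$. The second piece is integrable in $t$. For the first, I need a growth bound $\|Ju(t)\|_{L^2}\lesssim t^{1-\delta}$ on the transition region. I would obtain this via the pseudoconformal/virial identity: $\frac{d}{dt}\|Ju\|^2$ is controlled by $t\int(\text{nonlinear term})$, with the energy-subcritical growth \eqref{eq:Ncondition 3} and the uniform $L^\infty$ bound (Proposition~\ref{prop:Linfty}, for $H^2$ data) controlling the nonlinear part. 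Localized in the cone where $|u|\leq\eta$, I would aim to use a localized Morawetz/pseudoconformal estimate with weight $\tilde\chi$, whose boundary terms are again of the same commutator type, closing with a bootstrap.

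\textbf{(3) Conclusion.} With $w\in L^2_tL^6_x$, Duhamel plus the adjoint Strichartz estimate, as in the proof of Theorem~\ref{thm:RS}, gives convergence of $e^{it\Delta}w(t)$ in $L^2$ to some $\phi_+$. Since $w=u$ on $C^\circ_{2\epsilon}$, this yields \eqref{eq:cone scattering defn}.

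\textbf{Main obstacle.} The hard step is (2): getting decay of $\nabla u - \frac{ix}{2t}u$ (equivalently $t^{-1}Ju$) in the transition annuli, in a time-integrable sense. If the global pseudoconformal estimate fails because of solitons inside the thin cones, I would first try a local-in-cone version, weighting by $\chi_0(x/t)$. The flux through the cone boundaries is then again a lower-order commutator, and can be estimated using the smallness $|u|\le \eta$ there together with mass conservation.
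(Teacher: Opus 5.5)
Your commutator algebra is right: $[D_t+\Delta,\chi]u=i t^{-2}(\nabla\chi_0)(x/t)\cdot Ju+O(t^{-2})u$ with $J=x+2it\nabla$. Steps~(1) and~(3) are also fine, the latter conditionally on~(2). The gap is Step~(2), and it cannot be closed as posed. The bound $\|Ju(t)\|_{L^2(\text{transition})}\lesssim t^{1-\delta}$ is false for general $H^2$ data already for the free flow, and so is any bound making $t^{-2}\|(\nabla\chi_0)(x/t)\cdot Ju(t)\|_{L^2}$ integrable in $t$.

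For $u=e^{-it\Delta}\phi$ one has $Ju(t)=e^{-it\Delta}(x\phi)$. By the asymptotic-velocity principle, the quantity therefore behaves like $\|(\nabla\chi_0)(2D)\cdot x\phi\|_{L^2}$. It is controlled only if $\hat\phi$ is locally $H^1$ on $\{2\xi\in\supp\nabla\chi_0\}$, which is a weight on the data that $H^2$ does not provide. Concretely, take suitable $\psi\in\Schw$ and a unit vector $e$, and set $\phi=\sum_{k\ge1}k^{-1}\psi(\cdot+2^ke)$. This $\phi$ lies in $H^2$, by almost orthogonality of far-apart translates. However, $\|(\nabla\chi_0)(x/t)\cdot Ju(t)\|_{L^2}\gtrsim t/\log t$. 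So your forcing term has $L^2_x$ norm $\gtrsim (t\log t)^{-1}$ and is not in $L^1_tL^2_x$, although $\chi u$ certainly scatters in this case.

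At the linear level your scheme is just Cook's method for $e^{it\Delta}\chi(\cdot/t)e^{-it\Delta}\phi$. That family converges with no rate, so its time derivative need not be absolutely integrable. The pseudoconformal identity does not rescue this. It presupposes $xu(0)\in L^2$, which is not assumed. Moreover, the target estimate already fails for $\mc{N}\equiv0$, and nothing in the nonlinear structure creates weights, so no localized version of it can deliver the bound you need.

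The missing idea is to avoid commuting a cutoff through the equation and paying for the commutator in absolute value. The paper keeps the full Duhamel formula for $u$ and takes the scattering state to be the weak limit $u_+$ of $e^{it\Delta}u(t)$ (Lemma~\ref{lem:weak-limit-u-plus}). It applies the longitudinal cutoff $\Xi(t)$ only at the observation time $t$, after splitting $\xi^\parallel$ into two parts:
\begin{itemize}
\item a resonant part, treated by Duhamel from $t$ to $\infty$ (Lemma~\ref{lem:tail estimate});
\item a non-resonant part, treated by Duhamel from $0$ to $t$ together with Lemma~\ref{lem:asymptotic-velocity}.
\end{itemize}
The source is split as $\chi\mc{N}(|u|)u+(1-\chi)\mc{N}(|u|)u$. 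The first piece lies in $L^2_tL^{6/5}_x$ by the cone Strichartz estimate (Proposition~\ref{prop:u exterior decay}). That estimate rests on the decay $\|\chi_1u(t)\|_{L^2+L^\infty}\lesssim t^{-1/2}$, which comes from the improved dispersive estimate between disjoint cones (Lemma~\ref{lemma: UL-non-stationary}). The bubble piece is disposed of by non-stationary phase in $\xi^\parallel$, which needs only $\sup_s\|\mc{N}(|u(s)|)u(s)\|_{L^2}<\infty$ and no weights.

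In the multi-bubble case a time-dependent cutoff genuinely has to be differentiated, namely the sub-conic $B_j$. There its Heisenberg derivative is controlled quadratically, through the positive-commutator estimate of Proposition~\ref{prop:3d-propagation} paired against free test functions. The strong limit then follows from a weak/strong-limit argument that requires no rate. Finally, Lemma~\ref{lem:union of cones} applied over several directions $e$ covers $C^\circ_{2\epsilon}$.
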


\begin{remark}
 The factor 2 in $C^\circ_{2\epsilon}$ compared to the cone $C^\circ_\epsilon$ is chosen for simplicity, and could be pushed closer to $1$. In our conjectural application of this result, discussed above, we expect the $L^\infty$ estimate could be proved outside of arbitrarily thin cones by taking $T$ large enough; then Theorem~\ref{thm:main} would show scattering outside of arbitrarily thin cones. For this reason, we have not pursued the question of reducing this factor of 2. 
\end{remark}

\subsection{Context: the soliton resolution conjecture}
We now discuss the context of Theorem~\ref{thm:main}, which is an approach to the soliton resolution conjecture that seems promising to the authors. 
Suppose that we have a solution $u$ of \eqref{eq:nls} with $u(x, 0) \in \mc{S}$.
Let $\mathscr M$ and $\mathscr E$ be the conserved mass and energy of this solution. We shall show that $\| u(t) \|_{H^1_x}$ is bounded uniformly in time, so by Sobolev embedding, the $L^6$ norm of $u(t)$ is also uniformly bounded. It follows that $|u(t)|^2 + |u(t)|^4$ is uniformly bounded in $L^{3/2}_x$, and therefore --- thanks to \eqref{eq:Ncondition 3} --- the nonlinear potential $\mc{N}$ is in $L^\infty_t L^{3/2}_x$. If it happens that $\mc{N}$ is \emph{small} in this space, then the Rodnianski-Schlag result applies and tells us that $u$ scatters. This smallness, in turn, would be implied by the smallness of $\| u \|_{L^\infty_{t, x}}$ since, for example, $\| |u|^2\|_{L^{3/2}} \leq \| u \|_{L^2}^{4/3} \| u \|_{L^\infty}^{2/3}$. 
One can adopt the point of view that the obstruction to scattering is $u$ remaining large (that is, larger than some threshold $\eps$, below which Theorem~\ref{thm:RS} applies) in $L^\infty([T, \infty)_t) L^{\infty}_x$ for all $T > 0$. The main new idea of this article is to pursue this point of view \emph{locally in spacetime cones}  instead of globally in spacetime. This approach requires, for example, Strichartz estimates localized to spacetime cones; see Proposition~\ref{prop:u exterior decay}. 

Another property that we shall prove for our class of nonlinearities is that $\| u \|_{L^\infty_x}$ is uniformly bounded in time; moreover, $u$ is H\"older continuous with any exponent $\beta < 1/2$, uniformly in the sense that the H\"older seminorm
\begin{equation}\label{eq:Holder seminorm}
    \sup_{x \neq y} \frac{|u(x, t) - u(y, t)|}{|x-y|^\beta}
\end{equation}
is bounded uniformly in time $t$. So suppose that the solution $u$ is such that 
$$
\| u \|_{L^\infty([T, \infty)_t; L^{\infty}_x)} \geq \eps 
$$
for all $T > 0$. Then, for at least some times $t \geq T$, there exists $x_0$ with $|u(x_0, t)| \geq \eps'$ for any $\eps' < \eps$. Using the H\"older continuity, there is a radius $\rho = \rho(\eps')$ such that $|u(x,t)| \geq \eps'/2$ on $B(x_0, \rho)$. On this ball, the mass of $u(t)$ is at least $m(\eps')$. It follows, using a standard covering argument, that the set 
$$
B(t) := \{ x \in \R^3 \mid |u(x, t)| \geq \eps' \}
$$
is contained in a finite number (in fact at most $O(\mathscr{M})$-many) of balls of radius $2\rho$. Thus, we have control not only on the measure of $B(t)$, but also its `shape'. 

The following heuristic discussion motivates what we call the `weak soliton resolution conjecture'. Let us call a ball $B(x_0, 2\rho)$ where $|u(x_0, t)| \geq \eps'$ a \emph{bubble}, or more precisely a bubble\footnote{We shall also use the term `bubble' imprecisely in Sections~\ref{sec:onebubble} and \ref{sec:multibubble} to denote a set, such as a cone, containing points where $|u|$ is not assumed to be small.} at time $t$. As we have just seen, the set $B(t)$ is contained in a union of $O(\mathscr{M})$ bubbles at time $t$. Assume, for a moment, that these bubbles behave somewhat like free particles, at least when widely separated (and close-together bubbles can be treated as a unit for the purposes of this discussion): that is, they have a trajectory that, asymptotically, has constant velocity, say $v$. Then, their position could be tracked in spacetime, and would tend to the point at spacetime infinity given by $x/t = v$. At all points of spacetime infinity other than the finite number of points to which bubbles converge, we could expect that we have scattering behaviour. Thus, we make the following conjecture which is a slight reformulation of the soliton resolution conjecture: 

\begin{conjecture}[Weak soliton resolution conjecture] Given a solution to \eqref{eq:nls} with suitable (e.g. $H^2$) initial data, there are finitely many points on the spacetime boundary such that, outside any neighbourhood of these points, the solution scatters. 
\label{conj:weak-soliton-resolution}
\end{conjecture}

\begin{remark}
 A neighborhood of a point on the spacetime boundary, with $x/t = v$, can be viewed (loosely speaking) as the interior of  a truncated thin cone in the direction $v$. By a truncated thin cone we mean $C_{v, \epsilon} \cap \{ t \geq T \}$ for $C_{v, \epsilon}$ defined by \eqref{eq:thin cone} and some $T > 0$. For $\epsilon$ sufficiently small and $T$ sufficiently large, this will be contained in any given neighbourhood of the point $\{ x/t = v, 1/t = 0 \}$ at spacetime infinity. 
\end{remark}

\begin{remark}
    The `weak' aspect of this conjecture is that it makes no attempt to specify what sort of asymptotic behaviour is occurring near the finitely many points (or even if there is an asymptotic). The only claim that the conjecture makes about the solution near such points is that the non-scattering behaviour spreads sub-linearly in time, otherwise it could not be confined to an arbitrarily small neighbourhood of a point on the spacetime boundary. Note that this property alone is sufficient to distinguish it from scattering behaviour, as the only scattering solution that spreads sub-linearly in time is the zero solution. 
\end{remark}

The weak soliton resolution conjecture could be proved if the following two assertions can be established:
\begin{itemize}
    \item[(I)] Show that there are finitely many velocities $v_1, \dots, v_N$, such that
for each $\epsilon > 0$, there exists $T > 0$ such that the solution $u(t)$ is small in $L^{\infty}_{t,x}$, uniformly for all $t \geq T$,  on the complement of $N$ thin cones of width $\epsilon$ around each direction $v_j$ (as defined in \eqref{eq:thin cone}). In other words, show that the hypothesis \eqref{eq:cone condition 2} is satisfied for $T$ large enough.

\item[(II)] Assume that \eqref{eq:cone condition 2} is satisfied for sufficiently small $\epsilon > 0$. Prove that $u(t)$ scatters in a slightly smaller cone. 
\end{itemize}

In this article, Theorem~\ref{thm:main} proves assertion (II). Addressing assertion (I) is beyond the scope of this article, but we plan to tackle it in the future. Assertion (I) is proved in \cite{jia2026NLS-largedata} in a different context; see comments below.

\subsection{Literature review}
The literature on nonlinear Schr\"odinger equations is vast and we make no attempt to survey it here. We only mention a few results that are closely related to the present study. 

Broadly speaking, for a suitable class of focusing or mixed nonlinear Schr\"odinger equations, there is a ground state, or more precisely a one-parameter family of ground states, that minimize some quantity $\mathscr{Q}$  related to the natural scaling (exact or asymptotic) of the equation. For example, for a mass critical equation, this would be the minimal mass of a soliton, while for an energy-critical equation, it would be $\dot H^1$-norm. Then, in a variety of situations it has been shown that `below the ground state' (in terms of $\mathscr{Q}$), one only sees scattering: for example, see \cite{KenigMerle2006, DuycHolmRoud2008, DodsonMurphy2017}. The asymptotics of solutions that are close to a ground state has been studied intensively: modulational stability, and the existence of a centre manifold, have been shown in many cases \cite{Schlag2009, NakanishiSchlagBook2011}. In some cases, detailed information on the behaviour of all  solutions with $\mathscr{Q}$ no larger than the ground state has been obtained \cite{NakanishiSchlagBook2011, Dodson2023, Dodson2024}. 

Soliton resolution is concerned with the asymptotic description of \emph{all} solutions --- or at least a dense subset --- of the energy space. Little is known about this question for most nonlinear Schr\"odinger equations, other than the completely integrable 1D case. 
For several completely integrable one-dimensional equations, including cubic 1D nonlinear Schr\"odinger, this has been achieved via the inverse-scattering, Riemann-Hilbert or Lax pair method 
\cite{BorgJenkMcLa2018, DNLS-sol-res, Sol-res-mKdV, soliton-res-Benjamin--Ono}. 
It has also recently been shown for the equivariant self-dual Chern-Simons-Schr\"odinger equation \cite{KimKwonOh2025}. 

Another approach to soliton resolution uses the idea of concentration-compactness, which is used in a slightly different form in aforementioned works \cite{KenigMerle2006, DuycHolmRoud2008} and many others as well. 
Tao  \cite{Tao-large-radial, Tao2007Attractor} showed that there is a `(concentration-)compact attractor', to which solutions to the NLS, after removing the scattering part and modulo symmetry, always converge. However this argument only works, currently, in dimension $d \geq 5$. 

There has been considerable progress on the soliton resolution conjecture for nonlinear wave equations, particularly recently by Collot, Duyckaerts, Kenig, and  Merle; see for example \cite{DKM2023, CDKM2024, CDKM2026}. The survey article \cite{CDKMReview2025} describes recent progress and gives further references on this topic. There are also results on wave maps \cite{wave-map-sol-res}.

The intent of this article is to make progress towards soliton resolution for nonlinear Schr\"odinger equations in three dimensions;  more precisely, it is intended as progress toward Conjecture~\ref{conj:weak-soliton-resolution}, which is a weaker version of it. The third author, with Soffer, has proved for some classes of nonlinear Schr\"odinger equations that the solution breaks into a free scattering piece and a weakly localized piece that spreads sublinearly \cite{SofferWu:arXiv:2304.04245,SofferWu2026}. 
The second author recently proved Conjecture~\ref{conj:weak-soliton-resolution} when $d\geq5$ for $\dot{H}^{1/2}$-critical NLS \cite{jia2026NLS-largedata}. 
Tao's result \cite{Tao2007Attractor} and results concerning the scattering below the ground state \cite{DodsonMurphy2017,Guevara12,Fang-Xie-Cazenave-scattering-11} are the foundations for the results of  \cite{jia2026NLS-largedata}.

The class of nonlinearity we consider here is motivated by the work of Killip, Oh, Pocovnicu, and Vi\c{s}an \cite{Killip-Oh-Pocovnicu-Visan-Cubic-quintic} on the cubic-quintic equation, which is a very natural model for a mixed nonlinearity (focusing for small $|u|$ and defocusing for large $|u|$). However, as mentioned above, our results do not (yet) apply to this case as we are unable to treat the energy-critical case. We leave as an interesting open problem to prove Proposition~\ref{prop:Linfty} in this case, after which all subsequent results would follow. 

\subsection{Outline of this article}
In Section~\ref{sec:spacetime cones}, we give some basic theory for scattering in spacetime cones. In particular we show that if a solution $u$ scatters in two spacetime cones, then it scatters in the union of the two cones. This is a simple observation but conceptually important for our strategy in Sections~\ref{sec:onebubble} and \ref{sec:multibubble}. 

In Section~\ref{sec:prelim}, we prove the key uniform $L^\infty$ bound  (Proposition~\ref{prop:Linfty}), together with some immediate consequences. 

In Section~\ref{sec:perturbed ev} we prove some preliminary estimates for the linear Schr\"odinger equation with time-dependent potential, along the lines of Rodnianski-Schlag. We then prove a crucial improvement on the standard dispersive estimate on a spacetime cone in Lemma~\ref{lemma: UL-non-stationary}. The integrability of this estimate as $s \to t$ is crucial for proving the Strichartz estimate localized to a spacetime cone in Section~\ref{sec:Strichartz}. 

In Section~\ref{sec:onebubble} we prove Theorem~\ref{thm:main} in the case that $C^\circ_\epsilon$ consists of the complement of a single cone of the form $|x-vt| \leq ct$. 
Our strategy is to work with conic cutoffs that depend on the spatial variable $x$ only through a single coordinate $x \cdot e$ for some fixed unit vector $e$. We then decompose frequency into `resonant' and `nonresonant' parts, which is such that under free evolution, only the resonant part would contribute to scattering in that spacetime cone determined by the nonvanishing of the conic cutoff. For the resonant part, we express the NLS solution $u$ by the Duhamel formula integrating from $t$ to $\infty$, which gives a nontrivial scattering term involving the weak limit $u_+$ from Lemma~\ref{lem:weak-limit-u-plus}, while for the nonresonant part we integrate from $0$ to $t$. This choice leads to integrals that have a non-stationary phase, and thus tend to zero as $t \to \infty$. This part of the proof is represented visually in Figure~\ref{fig:nsp}. Finally, by varying the direction $e$ we obtain the conclusion of Theorem~\ref{thm:main}. 

In Section~\ref{sec:equator} we note that using the previous section we can give a quick proof that only scattering occurs in a neighbourhood of the equator at spacetime infinity. 

Finally in Section~\ref{sec:multibubble} we prove the general form of Theorem~\ref{thm:main}. Our proof in Section~\ref{sec:onebubble} has been designed to generalize as easily as possible to the multi-bubble case, yet we still need a new ingredient, as the non-resonant argument from Section~\ref{sec:onebubble} does not adapt in any obvious way to the multi-bubble setting. The solution is to use the propagation estimate from Proposition~\ref{prop:3d-propagation}, which allows us to decompose phase space more finely, in order to obtain the nonresonant estimate and close the proof. 


\section{Scattering in spacetime cones}\label{sec:spacetime cones}
We have already given the definition of scattering in a spacetime cone, in Definition~\ref{def:cone scattering}. Usually, instead of integrating over a spacetime cone $C$ as in Definition~\ref{def:cone scattering}, we will multiply $u(t) - e^{-it\Delta} w$ by a cutoff function that is equal to $1$ in the cone $C$. Let us define a \emph{conic cutoff} to be a smooth (away from the origin of spacetime) function $\chi$ that is homogeneous of degree zero in $(x,t)$, taking values in $[0,1]$. 

The support of $\chi$ is a closed spacetime cone. We define the \emph{cross-sectional support} of a conic cutoff $\chi$ to be the cross-section of its supporting cone. This cross-section can be viewed as a closed subset of spacetime infinity; as we only work with cones contained in the positive time half-space, $\{ t \geq 0 \}$, this cross-section is a closed subset of the union of the (open) northern hemisphere and the equator. Abusing notation slightly, we will say that the support of $\chi$ has \emph{compact cross-section} if the support is a compact subset of the (open) northern hemisphere. This is coordinatized by $x/t$, so the support of $\chi$ has compact cross-section if and only if $x/t$ is bounded on the support of $\chi$. In general, we only need to consider conic cutoff functions $\chi$ having one of the following two properties: (i) the support of $\chi$ has compact cross-section, or (ii) the support of $1 - \chi$ has compact cross-section. In case (ii), $\chi = 1$ whenever $|x/t|$ is sufficiently large.

As promised in the Introduction, we show that scattering on each of two cones $C_1$, $C_2$ implies scattering on the union of the two cones. 

\begin{lemma}\label{lem:union of cones}
Suppose that $C_1$ and $C_2$ are two spacetime cones such that the function $u(x, t)$ scatters in both $C_1$ and $C_2$. Then $u$ scatters in $C_1 \cup C_2$.  
\end{lemma}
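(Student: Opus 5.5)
The plan is to reduce everything to the asymptotics of free solutions, where the $L^2$ mass of $e^{-it\Delta}w$ in a cone is governed by $\hat w$ restricted to a fixed set of frequencies. Let $w_1, w_2 \in L^2(\R^3)$ be the scattering data for $C_1$ and $C_2$, so that $\|u(t) - e^{-it\Delta}w_j\|_{L^2((C_j)_t)} \to 0$ for $j=1,2$.

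\emph{Step 1 (Fraunhofer asymptotics).} I will use the standard factorization of the free propagator, $e^{-it\Delta} = \mc{M}(t)\mc{D}(t)\mc{F}\mc{M}(t)$. Here $\mc{M}(t)$ is multiplication by $e^{\pm i|x|^2/4t}$, with the sign fixed by the convention $D_t+\Delta$. The operator $\mc{D}(t)$ is the $L^2$-unitary dilation $f \mapsto (ct)^{-3/2} f(\pm x/2t)$, and $\mc{F}$ is the Fourier transform. Since $\mc{M}(t)w \to w$ in $L^2$ as $t\to\infty$ by dominated convergence, we get
\[
\| e^{-it\Delta} w - \mc{M}(t)\mc{D}(t)\hat w \|_{L^2} \to 0 .
\]
A spacetime cone $C$ has time slices $C_t = t\,C_1$, where $C_1$ is its slice at $t=1$. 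Consequently $\|\mc{M}(t)\mc{D}(t)\hat w\|_{L^2(C_t)} = \|\hat w\|_{L^2(\Omega)}$ exactly, for the fixed measurable set $\Omega = \{\xi : \pm 2\xi \in C_1\}$, which does not depend on $t$. Hence
\[
\lim_{t\to\infty}\|e^{-it\Delta}w\|_{L^2(C_t)} = \|\hat w\|_{L^2(\Omega)} .
\]
Let $\Omega_1, \Omega_2$ denote the frequency sets attached to the two given cones $C_1, C_2$ in this way.

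\emph{Step 2 (compatibility).} On the cone $C_1\cap C_2$, whose frequency set is $\Omega_1\cap\Omega_2$, the triangle inequality gives $\|e^{-it\Delta}(w_1-w_2)\|_{L^2((C_1\cap C_2)_t)} \to 0$. By Step 1 this forces $\hat w_1 = \hat w_2$ a.e.\ on $\Omega_1\cap\Omega_2$.

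\emph{Step 3 (gluing).} Define $w$ by
\[
\hat w = \hat w_1 \mathbf 1_{\Omega_1} + \hat w_2 \mathbf 1_{\Omega_2\setminus\Omega_1}, \qquad \text{so that } w \in L^2 .
\]
Then $\hat w - \hat w_1$ vanishes on $\Omega_1$, and $\hat w - \hat w_2$ vanishes a.e.\ on $\Omega_2$ by Step 2. Step 1 therefore gives $\|e^{-it\Delta}(w - w_j)\|_{L^2((C_j)_t)} \to 0$ for each $j$. Combining this with the hypothesis yields $\|u(t) - e^{-it\Delta}w\|_{L^2((C_j)_t)} \to 0$ for $j=1,2$. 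Finally, since $\|f\|_{L^2(A\cup B)} \le \|f\|_{L^2(A)} + \|f\|_{L^2(B)}$, the convergence holds on $(C_1\cup C_2)_t$.

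The only delicate point is Step 1: getting the factorization and its signs right, and checking that no regularity of the cone's boundary is needed. The latter holds because the dilation identity is exact for any measurable dilation-invariant set, so only $L^2$ convergence of $\mc{M}(t)w \to w$ is used.
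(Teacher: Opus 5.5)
Your proof is correct and follows essentially the same route as the paper: you identify $\lim_{t\to\infty}\|e^{-it\Delta}w\|_{L^2(C_t)}$ with the $L^2$ norm of $\hat w$ on the halved cross-section (the paper's $A_i/2$, your $\Omega_i$), deduce $\hat w_1=\hat w_2$ on the overlap, and glue. The only difference is technical. The paper uses the stationary phase expansion for Schwartz data followed by density and unitarity, whereas your exact factorization $e^{-it\Delta}=\mathcal{M}(t)\mathcal{D}(t)\mathcal{F}\mathcal{M}(t)$ gives the limit for all $w\in L^2$ at once, and you make explicit (via the cone $C_1\cap C_2$) the compatibility step that the paper leaves implicit.
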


\begin{proof}
Let $A_i$ denote the cross-section of the cone $C_i$, namely the set such that $C_i \cap \partial \rcomp = \{ x/t \in A_i \}$. We write $A_i/2$ for $\{ v/2 \mid v \in A_i \}$. 
 Let $w_i$, $i = 1, 2$, be such that 
 \begin{equation}\label{eq:limit identity}
     \lim_{t \to \infty} \| u(t) - e^{-it\Delta} w_i \|_{L^2(C_{i, t})} = 0. 
 \end{equation}
 We claim that this limit identity determines $\hat w_i$ on the set $A_i/2$. If $w_i \in \Schw$, then this follows from the stationary phase expansion. In fact, we have 
 \begin{equation}\label{eq:spe}
e^{-it\Delta} w_i = (2\pi)^{-3}\int e^{i(x \cdot \xi - t |\xi|^2)} \hat w_i(\xi) \, d\xi \sim (4\pi it)^{-3/2} e^{i|x|^2/4t} \hat w_i\big(\frac{x}{2t}\big) + O\Big(t^{-5/2}\big( \ang{\frac{x}{t}} \big)^{-N} \Big). 
 \end{equation}
The error term $O(t^{-5/2} \ang{x/t}^{-N})$ makes no contribution to \eqref{eq:limit identity}, so can be neglected when taking this limit. We see that, for $w_i \in \Schw$, the limit identity \eqref{eq:limit identity} determines $\hat w_i$ in the set $A_i/2$. Indeed, if $w_i$ and $w_i'$ were two functions such that the limit identity \eqref{eq:limit identity} held for both, then by subtraction we would obtain 
 $$
  \lim_{t \to \infty} \| e^{-it\Delta} w_i - e^{-it\Delta} w'_i \|_{L^2(C_{i, t})} = 0.
 $$
 But this function is $e^{-it\Delta}(w_i - w_i')$, and by direct calculation, using the stationary phase expansion and passing to the `pseudoconformal' variable $x/t$, we see that this limit is equal to  
 a constant times the $L^2$ norm of $\hat w_i - \hat w_i'$ on the set $A_i/2$. Thus we have $\hat w_i = \hat w_i'$ on $A_i/2$. 
 
 More generally, by using the density of Schwartz functions in $L^2$, and exploiting the unitarity of $e^{-it\Delta}$, we see that this identity holds true for all $w_i \in L^2$. A similar argument shows that the values of $\hat w_i$ outside the set $A_i/2$ are irrelevant; in particular, $w_i$ is non-unique, as one can modify $\hat w_i$ arbitrarily outside the set $A_i/2$. 

 Now suppose that $w_1 = w_2 = w$. Then it is obvious that 
 \begin{equation}\label{eq:limit identity w}
     \lim_{t \to \infty} \| u(t) - e^{-it\Delta} w \|_{L^2(C_{1, t} \cup C_{2,t})} = 0, 
 \end{equation}
 so we have scattering in the union $C_1 \cup C_2$. In general, we may not have $w_1 = w_2$ since these functions are non-unique: as we just observed, $\hat w_i$ is only determined in $A_i/2$. But this means that the Fourier transforms $\hat w_1, \hat w_2$  agree on $A_1/2 \cap A_2/2$. Therefore, we can modify $w_1$ outside the set $A_1/2$, and $w_2$ outside the set $A_2/2$, so that $w_1 = w_2$ after modification, and the limit identity \eqref{eq:limit identity} still holds for $i=1, 2$. As we have seen, taking $w$ to be this common modification, we deduce the limit identity \eqref{eq:limit identity w} on the union $C = C_1 \cup C_2$, so $u$ scatters on the union of two cones if it scatters on each.  
\end{proof}

To show scattering, it is helpful to observe that there is an clear candidate for the choice of  $w$ in Definition~\ref{def:cone scattering}, namely, the weak limit 
\begin{equation}\label{eq:weak lim}
u_+ := \textrm{w-}\lim_{t \to \infty} e^{it\Delta} u(t). 
\end{equation}
As is well known, this weak limit exists under rather general assumptions; see Lemma~\ref{lem:weak-limit-u-plus} in our setting. The following simple lemma provides the basic mechanism for proving scattering in spacetime cones: 

\begin{lemma} 
\label{lem:tail estimate}
Let $A(t)$ be uniformly bounded on $L^2_x$. Assume that
\begin{align}
\lim_{t\to\infty}\sup_{T_1\geq t}\|\int_t^{T_1}A(t)e^{-i(t-s)\Delta}\mc{N}(|u(s)|)u(s)\,ds\|_{L^2_x}=0. \label{eq:ext-Cauchy}
\end{align}
Then
\begin{align}
\|A(t)(u(t)-e^{-it\Delta}u_+)\|_{L^2_x}\longrightarrow0. \label{eq:ext-N-convergence}
\end{align}
\end{lemma}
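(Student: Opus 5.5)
We will write the difference $u(t)-e^{-it\Delta}u_+$ as a backward Duhamel integral from time $t$ to a large time $T_1$, plus a remainder controlled by the weak limit $u_+$ from \eqref{eq:weak lim}. The operator $A(t)$ is then applied, and the assumption \eqref{eq:ext-Cauchy} handles the integral term.

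First, Duhamel's formula from time $t$ to time $T_1>t$ for \eqref{eq:nls}, written as $\partial_t u = -i\Delta u + i\mc{N}(|u|)u$ with $D_t=-i\partial_t$, gives
\begin{equation*}
u(t) = e^{-i(t-T_1)\Delta}u(T_1) - i\int_t^{T_1} e^{-i(t-s)\Delta}\mc{N}(|u(s)|)u(s)\,ds .
\end{equation*}
The exact sign or constant in front of the integral is irrelevant, since only its norm enters. Writing $e^{-i(t-T_1)\Delta}u(T_1)=e^{-it\Delta}\big(e^{iT_1\Delta}u(T_1)\big)$, we obtain
\begin{equation*}
A(t)\big(u(t)-e^{-it\Delta}u_+\big) = A(t)e^{-it\Delta}\big(e^{iT_1\Delta}u(T_1)-u_+\big) - iA(t)\int_t^{T_1} e^{-i(t-s)\Delta}\mc{N}(|u(s)|)u(s)\,ds .
\end{equation*}

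Second, we pass to a weak limit in $T_1$ with $t$ fixed. By Lemma~\ref{lem:weak-limit-u-plus}, $e^{iT_1\Delta}u(T_1)\rightharpoonup u_+$ weakly in $L^2$. Since $A(t)e^{-it\Delta}$ is bounded for fixed $t$, the first term converges weakly to $0$ as $T_1\to\infty$. The left-hand side does not depend on $T_1$. Pairing with an arbitrary $\phi\in L^2$ and letting $T_1\to\infty$ therefore gives
\begin{equation*}
|\langle A(t)(u(t)-e^{-it\Delta}u_+),\phi\rangle| \le \limsup_{T_1\to\infty}\Big\|A(t)\int_t^{T_1} e^{-i(t-s)\Delta}\mc{N}(|u(s)|)u(s)\,ds\Big\|_{L^2_x}\|\phi\|_{L^2}.
\end{equation*}
The right-hand side is at most $\sup_{T_1\ge t}\|\cdots\|\,\|\phi\|_{L^2}$. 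Taking the supremum over $\|\phi\|_{L^2}\le 1$ bounds $\|A(t)(u(t)-e^{-it\Delta}u_+)\|_{L^2}$ by the quantity in \eqref{eq:ext-Cauchy}. That quantity tends to zero as $t\to\infty$ by hypothesis, which yields \eqref{eq:ext-N-convergence}.

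The main subtlety is that we have only weak, not strong, convergence of $e^{iT_1\Delta}u(T_1)$. Working in the weak topology and using lower semicontinuity of the norm (the duality pairing above) avoids this. One also needs $A(t)$ to be independent of $s$ and $T_1$, so that it commutes with the limit. Uniform boundedness of $A(t)$ is used only to make sense of the expressions; the weak-limit step needs boundedness of $A(t)$ only at each fixed $t$.
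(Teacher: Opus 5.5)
Your proof is correct and follows the paper's argument: Duhamel from $t$ to $T_1$, then the weak limit $T_1\to\infty$ using $e^{iT_1\Delta}u(T_1)\rightharpoonup u_+$, then a norm bound by weak lower semicontinuity, which is exactly what your duality pairing does. The only slip is a harmless sign: from $(D_t+\Delta)u+\mathcal{N}(|u|)u=0$ one gets $\partial_t u=-i\Delta u-i\mathcal{N}(|u|)u$, so the integral carries $+i$ as in the paper, and as you note this does not affect the norm bound.
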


\begin{proof}
For $T_1\geq t$, Duhamel's formula gives
\begin{align} 
A(t)u(t)-A(t)e^{-it\Delta}e^{iT_1\Delta}u(T_1)=i\int_t^{T_1}A(t)e^{-i(t-s)\Delta}\mc{N}(|u(s)|)u(s)\,ds. \label{eq:duhamel-t-T}
\end{align}
Because of the existence of the weak limit \eqref{eq:weak lim}, for each fixed $t$, we have 
\begin{align}
A(t)e^{-it\Delta}e^{iT_1\Delta}u(T_1)\rightharpoonup A(t)e^{-it\Delta}u_+ \quad\text{as }T_1\to\infty.
\end{align}
So the left hand side of \eqref{eq:duhamel-t-T} has a weak limit  in $L^2$ as $T_1 \to \infty$ for fixed $t$. Therefore the right hand side of \eqref{eq:duhamel-t-T} has a weak limit as $T_1\to\infty$. 
Taking the weak limit as $T_1 \to \infty$ in \eqref{eq:duhamel-t-T}, we obtain 
\begin{equation}
A(t)\big( u(t)-e^{-it\Delta}u_+ \big) =i \ \textrm{w-} \!\!\! \lim_{T_1 \to \infty} \int_t^{T_1}A(t)e^{-i(t-s)\Delta}\mc{N}(|u(s)|)u(s)\,ds. \label{eq:duhamel-t-T-2}
\end{equation}
Using  weak lower semicontinuity, the norm of the right hand side is bounded by 
$$
\sup_{T_1\geq t}\big\|\int_t^{T_1}A(t)e^{-i(t-s)\Delta}\mc{N}(|u(s)|)u(s)\,ds \big\|_{L^2_x}. 
$$
Then \eqref{eq:ext-Cauchy} implies that the right hand side tends to zero in $L^2$ as $t \to \infty$, proving \eqref{eq:ext-N-convergence}.
\end{proof}
If we take the operator $A(t)$ to be multiplication by a conic cutoff $\chi(\cdot, t)$, then \eqref{eq:ext-N-convergence} shows scattering on the spacetime cone $\{ \chi = 1 \}$. To obtain \eqref{eq:ext-Cauchy} we will prove a Strichartz estimate on a spacetime cone; see Proposition~\ref{prop:u exterior decay}.


\section{Preliminary estimates}\label{sec:prelim}
\label{sec:preliminary-est}

\begin{proposition}\label{prop:N basic properties}
The solution to \eqref{eq:nls} with nonlinear potential $\mc{N}(|u|)$ satisfying \eqref{eq:Ncondition 1} --  \eqref{eq:Ncondition 3} satisfies global well-posedness in $H^1(\R^3)$. Moreover, with an initial condition $u(0) \in H^2(\R^3)$, we have 
\begin{itemize}
    \item[(i)] The $H^1_x$-norm of $u(t)$ is bounded uniformly in time by a constant $C = C(\mathscr{M}, \mathscr{E})$; 
    \item[(ii)] The NLS \eqref{eq:nls} admits non-scattering solutions;
    \item[(iii)] The nonlinear term $\mc{N}(|u(t)|) u(t)$ satisfies 
\begin{equation}\label{eq:nonlinear Lp bound}
\| \mc{N}(|u(t)|) u(t) \|_{L^p} \leq C(\mathscr{M}, \mathscr{E}), \quad p \in [1, \frac{6}{5}]
\end{equation}
uniformly in time, and
\begin{equation}\label{eq:nonlinear Lp gradient bound}
\| \nabla (\mc{N}(|u(t)|) u(t)) \|_{L^p} \leq C(\mathscr{M}, \mathscr{E}) \| u(t) \|_{L^\infty}^{1-\kappa}, \quad p \in [1, \frac{6}{6-\kappa}] 
\end{equation}
where $\kappa$ is as in \eqref{eq:Ncondition 3}. 
\end{itemize}
\end{proposition}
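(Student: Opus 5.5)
The plan is to prove each part in turn: first global well-posedness with the uniform $H^1$ bound (i), then the existence of non-scattering solutions (ii), and finally the nonlinear estimates (iii).

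\emph{Well-posedness and part (i).} By \eqref{eq:Ncondition 3}, $|\mc{N}(|u|)u| \lesssim |u|^3 + |u|^{5-2\kappa}$. The second term is strictly $H^1$-subcritical in three dimensions, since the critical power is $5$. Standard Strichartz contraction arguments in $H^1$ therefore give local well-posedness, with existence time depending only on $\|u(0)\|_{H^1}$. It then suffices to bound $\|u(t)\|_{H^1}$ uniformly. For this we use \eqref{eq:Ncondition 2}: $\mc{N} \ge 0$ for $a \ge b$, so $\mc{E}(a) \ge \mc{E}(b)$ there. For $a \le b$, the bound $|\mc{N}(a)| \lesssim a^2$ gives $|\mc{E}(a)| \lesssim a^4 \le b^2 a^2$. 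Hence $\mc{E}(|u|) \ge -C b^2 |u|^2$ pointwise. Combined with conservation of energy and mass, this yields
\[
\tfrac12\|\nabla u(t)\|_{L^2}^2 \le \mathscr{E} + C b^2 \mathscr{M}.
\]
This gives (i) and, by iterating the local theory, global existence. Persistence of $H^2$ regularity follows from the $C^1$ assumption on $\mc{N}$, although only $H^1$ is needed for this proposition.

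\emph{Part (ii).} We construct a soliton $u = e^{-i\omega t} Q(x)$. Here $Q$ solves $\Delta Q - \omega Q + \mc{N}(Q)Q = 0$, after fixing the sign conventions of $D_t$ and $\Delta$. The plan is to use the Berestycki--Lions existence theorem for $-\Delta_{\mathrm{std}} Q = g(Q)$, where $g(s) = -\omega s - \mc{N}(s)s$ up to sign. Its hypotheses are $g(s) = o(s^5)$ at infinity, which follows from \eqref{eq:Ncondition 3}, together with $\limsup g(s)/s < 0$ at $0$ and the existence of $\zeta$ with $G(\zeta) > 0$. The last condition holds for small $\omega > 0$ because $\mc{N}(a_0) < 0$ makes $-\mc{E}$ positive near $a_0$; choosing $\omega < -\mc{E}(a_0)/a_0^2 \cdot 2$ suffices. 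A nonzero $Q \in H^1$ does not scatter, since $\|e^{-i\omega t}Q\|_{L^2(|x|<R)}$ stays bounded below while any free solution decays locally in $L^2$. This construction is the step that needs the most care, chiefly in tracking signs and in the positivity condition $G(\zeta) > 0$.

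\emph{Part (iii).} The bound on the nonlinear term comes from Hölder and Sobolev. For $|u| \le 1$, $|\mc{N}u| \lesssim |u|^3$; for $|u| \ge 1$, $|\mc{N}u| \lesssim |u|^{5-2\kappa}$. Both are dominated by $|u|^3 + |u|^5$, and the large-$|u|$ part is even lower in power. Since $u \in L^2 \cap L^6$ uniformly by (i), $|u|^3 \in L^{p}$ for $p \in [2/3, 2]$ (take $p \in [1, 6/5]$). For the large region, $|u|^{5-2\kappa}\mathbf{1}_{|u|\ge1} \le |u|^{5}\mathbf{1}_{|u|\ge1}$, which lies in $L^{6/5}$, and it also lies in $L^1$ by comparison with $|u|^{6}\mathbf{1}_{|u|\ge1} \le |u|^6$.

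For the gradient bound, $|\nabla(\mc{N}(|u|)u)| \lesssim (|\mc{N}| + |u|\,|\mc{N}'|)|\nabla u|$, which is $\lesssim (|u|^2 + |u|^{4-2\kappa})|\nabla u|$. We first put $|u|^{1-\kappa}$ in $L^\infty$. The remaining factor is $|u|^{1+\kappa}$ for small $|u|$ and $|u|^{3-\kappa}$ for large $|u|$; this exponent reflects the fact that $\mc{N}' = O(a^{3-2\kappa})$, so $|u|\,\mc{N}' = O(a^{4-2\kappa})$. Both are bounded by $|u|^{1+\kappa} + |u|^{3-\kappa}$, and we then take Hölder with $\nabla u \in L^2$. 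We have $|u|^{1+\kappa} \in L^q$ for $q \in [2/(1+\kappa), 6/(1+\kappa)]$ and $|u|^{3-\kappa} \in L^{q}$ for $q \in [2/(3-\kappa), 6/(3-\kappa)]$. Restricting to $|u| \ge 1$, the second factor lies in all $L^q$ with $q \le 6/(3-\kappa)$. Pairing with $L^2$ gives exponents $1/p = 1/2 + 1/q$. The largest $p$ comes from $q = 6/(3-\kappa)$, giving $1/p = 1/2 + (3-\kappa)/6 = (6-\kappa)/6$, which explains the endpoint $6/(6-\kappa)$. The smallest is $p = 1$, coming from $q = 2$, which is allowed. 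Interpolating gives the full range $[1, 6/(6-\kappa)]$ with constant $C(\mathscr{M}, \mathscr{E})\|u(t)\|_{L^\infty}^{1-\kappa}$.
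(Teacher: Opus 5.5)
Your proposal is correct. Well-posedness, part (i) and part (iii) follow the paper's proof. In (i) you use the same pointwise bound $\mc{E}(|u|)\ge -Cb^2|u|^2$ together with energy and mass conservation. In (iii) you use the same H\"older argument: pull out $|u|^{1-\kappa}$ in $L^\infty$, put $\nabla u$ in $L^2$, and place the remaining power of $|u|$ using $u\in L^2\cap L^6$. Your exponent bookkeeping for the endpoint $p=6/(6-\kappa)$ and for $p=1$ is right.

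Part (ii) takes a different route. The paper cites Berestycki--Lions only for context. Its actual argument takes data $\phi(\cdot/R)$ with $\phi\in[0,a_0]$ and $\phi=a_0$ on $B(0,1)$. The potential energy of this data is negative and scales like $R^3$, while the kinetic energy scales like $R$, so the total energy is negative for large $R$. Negative-energy solutions cannot scatter, since scattering forces $\mathscr{E}=\tfrac12\|\nabla u_+\|_{L^2}^2\ge 0$.

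Your standing-wave construction, by comparison:
\begin{itemize}
\item It produces a more concrete non-scattering object. Its local mass in $B(0,R)$ never decays, which is a stronger obstruction and also prevents scattering in any cone around velocity $0$.
\item The cost is invoking a nontrivial variational existence theorem and checking its hypotheses. The paper's scaling argument is elementary but says nothing about the structure of the solution.
\end{itemize}

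One sign slip needs fixing. With $D_t=-i\partial_t$ and the positive Laplacian, the ansatz $e^{-i\omega t}Q$ gives $\Delta Q-\omega Q+\mc{N}(Q)Q=0$, i.e.\ $-\Delta_{\mathrm{std}}Q=\omega Q-\mc{N}(Q)Q$. This fails the Berestycki--Lions condition at $0$ when $\omega>0$. The correct ansatz is $e^{+i\omega t}Q$, which gives $\Delta Q+\omega Q+\mc{N}(Q)Q=0$, that is, $-\Delta_{\mathrm{std}}Q=g(Q)$ with $g(s)=-\omega s-\mc{N}(s)s$. Your stated $g$ and your choice $0<\omega<-2\mc{E}(a_0)/a_0^2$ are exactly right for this corrected equation.
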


\begin{remark} Item (ii) above is not required for any of our results. We include it to show that the main theorem is non-vacuous. Estimate \eqref{eq:nonlinear Lp gradient bound} will be used in a bootstrap argument to show that the $L^\infty_x$-norm of $u$ is uniformly bounded, and it is important that the power of $\| u \|_{L^\infty_x}$ is strictly smaller than $1$. 
\end{remark}

\begin{proof} 
As these statements are well-known and easily obtained, we shall be brief. Global well-posedness follows from the main result of \cite{CazWeis1988}. Item (i) follows from \eqref{eq:Ncondition 1} and   \eqref{eq:Ncondition 2}. We bound the kinetic energy term in the energy by 
\begin{equation}\begin{gathered}
   \frac1{2}  \int |\nabla u|^2 \, dx = \mathscr{E} - \int \mc{E}(|u|) \, dx \leq \mathscr{E} + c\int |u|^4 1_{|u|\leq b} + b^41_{|u|> b} \, dx \\
   \leq \mathscr{E} + c b^2 \int |u|^2  \, dx \\
   = \mathscr{E} + c b^2 \mathscr{M}, 
  \end{gathered} 
\end{equation}
with $b$ as in \eqref{eq:Ncondition 2}, since $\mc{N}(|u|) = O(|u|^2)$ and hence $\mathcal{E}(|u|) = O(|u|^4)$ as $|u| \to 0$.

For item (ii), the main result of \cite{Berestycki-Lions-ground-state} shows the existence of soliton type solutions.
Here we include a short proof for the existence of non-scattering solutions.
It suffices to show that there are initial conditions with negative energy: such solutions cannot scatter as scattering solutions always have nonnegative energy.

By assumption, we can choose $a_0 > 0$ such that $\mc{N}(a_0) < 0$ and $\mc{N}(a) \leq 0$ for $a \in [0, a_0]$, and 
 $\phi \in C_c^\infty$ such that $\phi(x) = a_0$ for $x \in B(0, 1)$ and $\phi(x) \in [0,a_0]$ for all $x \in \R^3$. Then, 
$$
\int \mc{E}(|\phi(\frac{x}{R})|) \, dx < 0 \text{ for all } R > 0.
$$
Moreover, the integral above scales as $R^3$ for $R \to \infty$, while $\| \nabla \phi( \cdot/R) \|_2^2$ scales as $R$. It follows that for sufficiently large $R$, the initial condition $\phi(\cdot/R)$ has negative energy. 

To prove \eqref{eq:nonlinear Lp bound}, we note that by (i) and Sobolev embedding, $\| u \|_r$ is uniformly bounded for $2 \leq r \leq 6$. The $L^p$ norm in (iii) is bounded by $C \| u \|_{3p}^3 + \| u \|_{(5-2\kappa)p}^{5-2\kappa}$, hence uniformly bounded since $3p$ and $(5-2\kappa)p$ are both in the range $[2, 6]$. 

To prove \eqref{eq:nonlinear Lp gradient bound}, we bound $\nabla (\mc{N}(|u|) u)$ pointwise by $C|\nabla u| \big( |u|^2 1_{|u| \leq b} + |u|^{4-2\kappa} 1_{|u| > b} \big)$. The bound is straightforward for the $|u|^2$ term so we only consider the $|u|^{4-2\kappa}$ term. We remove a factor $|u|^{1-\kappa}$ which we bound in $L^\infty$, and then bound $|\nabla u|$ in $L^2$, $|u|$ in $L^6$, and use H\"older's inequality. 
\end{proof}

\begin{proposition}\label{prop:Linfty} Assume that $u(0) \in H^2(\R^3)$. Then $u(t) \in L^\infty(\R^3)$ uniformly in time, i.e. $u \in L^\infty_t L^\infty_x$. The bound depends only on $\mathscr{E}$, $\mathscr{M}$ and $\| u(0) \|_{H^2}$. 
\end{proposition}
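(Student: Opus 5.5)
The plan is a bootstrap on $\|u(t)\|_{L^\infty_x}$, driven by the gradient bound \eqref{eq:nonlinear Lp gradient bound} with its sublinear power $\|u\|_{L^\infty}^{1-\kappa}$, as anticipated in the remark following Proposition~\ref{prop:N basic properties}. The natural quantity to control is $\|u(t)\|_{H^2}$, or more economically $\|\partial_t u(t)\|_{L^2}$. Since $(D_t+\Delta)u = -\mc{N}(|u|)u$, a bound on $\|\Delta u(t)\|_{L^2}$ is equivalent to a bound on $\|\partial_t u(t)\|_{L^2}$ modulo $\|\mc{N}(|u|)u\|_{L^2}$. In three dimensions, the Gagliardo--Nirenberg inequality $\|u\|_{L^\infty}\lesssim \|u\|_{H^1}^{1/2}\|u\|_{H^2}^{1/2}$, combined with Proposition~\ref{prop:N basic properties}(i), then gives $\|u\|_{L^\infty}\lesssim \|u\|_{H^2}^{1/2}$ uniformly in time.

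Controlling $H^2$ directly by energy estimates is awkward. I would instead estimate $u$ locally in time via Duhamel, using smoothing and dispersive estimates on unit time intervals. Fix $t$ and write
\[
u(t) = e^{-i\Delta}u(t-1) + i\int_{t-1}^t e^{-i(t-s)\Delta}\,\mc{N}(|u(s)|)u(s)\,ds
\]
(for $t\ge1$; small $t$ is handled by $H^2$ local theory). The free part is bounded in $L^\infty$ by the dispersive estimate $\|e^{-i\Delta}f\|_{L^\infty}\lesssim\|f\|_{L^1}$. That is not directly usable, so I would combine it with Sobolev: $\|e^{-i\Delta}f\|_{L^\infty}\lesssim \|f\|_{L^2}$ fails, but pairing $H^1\cap L^2$ with a low/high frequency split works. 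Low frequencies are bounded by Bernstein and the mass. For high frequencies I would instead run the whole Duhamel integral from $0$ and use Strichartz with derivatives.

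A cleaner route I would commit to is to bound $\|\nabla u\|_{L^q_x}$ for some $q>3$ on unit time intervals, using Strichartz applied to $\nabla u$:
\[
\|\nabla u\|_{L^{p}_t L^{q}_x([t-1,t])}\lesssim \|\nabla u(t-1)\|_{L^2}+\|\nabla(\mc{N}(|u|)u)\|_{L^{\tilde p'}_tL^{\tilde q'}_x}.
\]
By \eqref{eq:nonlinear Lp gradient bound}, the last term is at most $C\sup_{[t-1,t]}\|u\|_{L^\infty}^{1-\kappa}$, provided the dual exponent $\tilde q'$ can be chosen in $[1,6/(6-\kappa)]$ and admissible. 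In 3D, dual admissible exponents have $\tilde q'\in[6/5,2]$, so \eqref{eq:nonlinear Lp gradient bound} must first be upgraded. I would redo its Hölder argument, splitting $|u|^{4-2\kappa}|\nabla u|$ as $|u|^{1-\kappa}\cdot|u|^{3-\kappa}|\nabla u|$ with $\nabla u\in L^2$, $u\in L^6$, and interpolating the extra power of $u$ between $L^6$ and $L^\infty$. This yields $\|\nabla(\mc{N}u)\|_{L^{6/5}}\lesssim 1+\|u\|_{L^\infty}^{\theta}$ with some $\theta<2$. The gain over $\theta=2$ comes from $\kappa>0$.

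Then, on each interval, $\|u\|_{L^\infty}\lesssim \|u\|_{L^2}+\|\nabla u\|_{L^q}$ for $q>3$ (Morrey/Sobolev), and averaging over the interval via the $L^p_t$ bound together with a continuity argument should produce $M(t)\le C+CM(t)^{\gamma}$ with $\gamma<1$, where $M(t)=\sup_{[0,t]}\|u\|_{L^\infty}$. Continuity of $M$ in $t$ follows from $H^2$ persistence of regularity, which also uses the initial $H^2$ norm for the first unit interval. Hence $M$ is bounded independently of $t$. The main obstacle is exactly this exponent bookkeeping: obtaining a pointwise-in-time $L^\infty$ bound, not merely a time-averaged one, from Strichartz norms with a strictly sublinear power. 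For that I expect to pass through the $\|\partial_t u\|_{L^2}$ equation, differentiating the NLS in time and using \eqref{eq:Ncondition 3} on $\mc{N}'$, to upgrade time-averaged control to uniform control.
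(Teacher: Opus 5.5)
Your proposal has a genuine gap at exactly the point you flag as the ``main obstacle'': nothing in it produces a pointwise-in-time bound on $\|u(t)\|_{L^\infty}$.

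Restarting Duhamel at $t-1$ forces you to control the free term $e^{-i(s-t+1)\Delta}u(t-1)$ in $L^\infty$ at individual times. But all you know uniformly about $u(t-1)$ is an $H^1$ bound, plus the bootstrap $L^\infty$ bound. The free flow is not bounded from $H^1\cap L^\infty$ to $L^\infty$: suitably quadratically modulated data focus.

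Strichartz for $\nabla u$ only yields a time-averaged quantity, $\|u\|_{L^p_tL^\infty_x([t-1,t])}$. An $L^p_t$ bound gives you good times in each unit interval. A ``continuity argument'' cannot propagate from a good time to all nearby times without a short-time $L^\infty$ estimate, and that fails for the same reason.

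The fallback via $\|\partial_t u\|_{L^2}$ does not close either. The function $v=\partial_t u$ solves a linear equation with a large potential of size $|u|^2+|u|^{4-2\kappa}$, so the energy identity only gives Gronwall-type exponential growth. A uniform bound on $\|\partial_t u\|_{L^2}$ would amount to a uniform-in-time $H^2$ bound for large data, which is far stronger than the proposition.

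Separately, your exponents do not give $\gamma<1$ as stated. Your upgraded gradient estimate has $\theta=2-2\kappa$. With the linear Morrey bound $\|u\|_{L^\infty}\lesssim\|u\|_{L^2}+\|\nabla u\|_{L^q}$, this is superlinear when $\kappa<1/2$.

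The missing idea is to run Duhamel from time $0$ and never restart.
\begin{itemize}
\item The only free term is then $e^{-it\Delta}u(0)$. It is bounded in $L^\infty$ uniformly in $t$ by $H^2\subset L^\infty$ and unitarity on $H^2$. This is the only place $\|u(0)\|_{H^2}$ enters.
\item The far past $s\in[0,t-1]$ is handled by the $L^1\to L^\infty$ dispersive estimate alone. This works because $\sup_s\|\mc{N}(|u(s)|)u(s)\|_{L^1}\le C(\mathscr{M},\mathscr{E})$ and $\int_0^{t-1}(t-s)^{-3/2}\,ds\le 2$.
\item Only the last unit of time needs a derivative. The paper writes that piece as $(\Delta+1)^{-1}(\cdot)+\sum_j D_j(\Delta+1)^{-1}D_j(\cdot)$ and splits frequencies at $|\xi|\sim|t-s|^{-\epsilon_p}$. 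For high frequencies it uses dispersion, since the multiplier has an $L^1$ kernel of norm $O(|t-s|^{\epsilon_p})$. For low frequencies it uses Hausdorff--Young.
\item Choosing $\epsilon_p\in(1/2,p/(3-p))$ makes both $s$-integrals converge. The input is \eqref{eq:nonlinear Lp gradient bound} for $p=1$ and some $p$ slightly above $1$. Both are already in the allowed range, so no upgrade to $L^{6/5}$ is needed.
\end{itemize}
This yields directly
$\sup_{[0,T]}\|u\|_{L^\infty}\le C+C\big(\sup_{[0,T]}\|u\|_{L^\infty}\big)^{1-\kappa}$.
Persistence of $H^2$ regularity is used only qualitatively, to make the supremum finite.
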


\begin{proof}
For this, we expand $u(t)$ using Duhamel's principle:
\begin{equation}
    u(t)=e^{-it\Delta}u(0)+(-i)\int_0^t e^{-i(t-s)\Delta} \mathcal N(|u(x,s)|)u(x,s)ds.
\end{equation}
By Sobolev embedding, we have 
\begin{equation}\label{eq:u_0 term}
    \|e^{-it\Delta}u(0)\|_{L^\infty}\leq C \|e^{-it\Delta}u(0)\|_{H^2} = C\| u(0)\|_{H^2}. 
\end{equation}
For the integral term, we split it into two terms (we only need to consider the case $t\geq 1$):
\begin{equation}
    (-i)\int_0^t e^{-i(t-s)\Delta} \mathcal N(|u(x,s)|)u(x,s)ds=u_{\mathrm{int},1}(t)+u_{\mathrm{int},2}(t),
\end{equation}
where 
\begin{equation}
    u_{\mathrm{int},1}(t):=(-i)\int_0^{t-1} e^{-i(t-s)\Delta} \mathcal N(|u(x,s)|)u(x,s)ds
\end{equation}
and
\begin{equation}
    u_{\mathrm{int},2}(t):=(-i)\int_{t-1}^t e^{-i(t-s)\Delta} \mathcal N(|u(x,s)|)u(x,s)ds.
\end{equation}
The $L^1$ -- $L^\infty$ decay estimate of the free flow together with \eqref{eq:nonlinear Lp bound} ($p=1$ case) yields 
\begin{equation}
    \|u_{\mathrm{int},1}(t)\|_{L^\infty}\leq C \int_0^{t-1}|t-s|^{-3/2}\|\mathcal N(|u(x,s)|)u(x,s)\|_{L^1}ds\leq 2C C(\mathscr{M}, \mathscr{E})
\end{equation}
for some positive constants $C$ and $C(\mathscr{M}, \mathscr{E})$. 

Next, we estimate $\|u_{\mathrm{int},2}(t)\|_{L^\infty}$ by exploiting the smoothness of $\mc{N}(|u|) u$, as given by \eqref{eq:nonlinear Lp gradient bound}. To be precise, we write 
\begin{equation}
    u_{\mathrm{int},2}(t)=\frac{\Delta+1}{\Delta+1}  u_{\mathrm{int},2}(t)=\frac{1}{\Delta+1}u_{\mathrm{int},2}(t)+\sum\limits_{j} \frac{D_{x_j}}{\Delta+1} D_{x_j}u_{\mathrm{int},2}(t).
\end{equation}
Let $\epsilon_p > 1/2$ be a constant that we shall choose depending on a Lebesgue exponent $p$ shortly. 
To estimate $\frac{D_{x_j}}{\Delta+1} D_{x_j}u_{\mathrm{int},2}(t)$, we split it into the high-frequency part and the low frequency part:
\begin{equation}
    \frac{D_{x_j}}{\Delta+1} D_{x_j}u_{\mathrm{int},2}(t)=u_{\mathrm{int},2,j,h}(t)+u_{\mathrm{int},2,j,l}(t),
\end{equation}
where, for some cutoff function $\chi$ equal to $1$ on $[0,1/2]$ and $0$ on $[1, \infty)$, 
\begin{equation}\label{eq:high freq}
   u_{\mathrm{int},2,j,h}(t):= (-i)\int_{t-1}^t \frac{D_{x_j}}{\Delta+1} (1-\chi)\big( \frac{\Delta}{|t-s|^{-2\epsilon_p}}\big) e^{-i(t-s)\Delta} D_{x_j}\mathcal N(|u(x,s)|)u(x,s)ds
\end{equation}
and
\begin{equation}\label{eq:low freq}
   u_{\mathrm{int},2,j,l}(t):= (-i)\int_{t-1}^t \frac{D_{x_j}}{\Delta+1} \chi\big( \frac{\Delta}{|t-s|^{-2\epsilon_p}}\big)e^{-i(t-s)\Delta} D_{x_j}\mathcal N(|u(x,s)|)u(x,s)ds.
\end{equation}
To estimate the high frequency piece \eqref{eq:high freq} we use the $L^1 \to L^\infty$ dispersive estimate for the free flow. Thus we need to bound the $L^\infty \to L^\infty$ norm of the operator $\frac{D_{j}}{\Delta+1} (1-\chi)( \frac{\Delta}{|t-s|^{-2\epsilon_p}})$. This is a Fourier multiplier and its norm is bounded by the $L^1$ norm of its convolution kernel. This is given by (with $A := |t-s|^{-\epsilon_p}$)
\begin{equation}
(2\pi)^{-3}  \int e^{ix \cdot \xi} \frac{\xi_j}{1 + |\xi|^2} (1-\chi)\big(\frac{|\xi|^2}{A^2} \big) \, d\xi = (2\pi)^{-3} A^2 \int e^{iAx \cdot \xib} \frac{\xib_j}{A^{-2} + |\xib|^2} (1-\chi)(|\xib|^2) \, d\xib.
\end{equation}
This is $A^2 f_A(Ax)$ where 
$$
f_A(x) := (2\pi)^{-3} \int e^{ix \cdot \xib} \frac{\xib_j}{A^{-2} + |\xib|^2} (1-\chi)(|\xib|^2) \, d\xib
$$
is the inverse Fourier transform of a classical symbol of order $-1$, depending smoothly and parametrically on $A^{-2}$ (note that the singularity at $\xib = 0$ has been removed due to the $1-\chi$ cutoff, so this parameter is smooth down to $A^{-2} = 0$, i.e. uniformly for $A \to \infty$). The function $f_A$ is smooth for $x \neq 0$, rapidly decreasing at infinity, uniformly in $A \geq 1$, and $O(|x|^{-2})$ as $x \to 0$, again uniformly in $A \geq 1$. It follows that $f_A$ is uniformly in $L^1$. By scaling we see that $\| A^2 f_A(Ax) \|_{L^1} = O(A^{-1})$. 
So we can estimate the $L^1$ norm of this convolution kernel by $C A^{-1} = C |t-s|^{\epsilon_p}$. Putting these estimates together we obtain, using \eqref{eq:nonlinear Lp gradient bound} with $p=1$,  
\begin{equation}
    \begin{aligned}
&\|\sum\limits_{j}u_{\mathrm{int},2,j,h}(t)\|_{L^\infty}\\
        \leq& \int_{t-1}^t \sum\limits_{j}\| \frac{D_{j}}{\Delta+1} (1-\chi)( \frac{\Delta}{|t-s|^{-2\epsilon_p}}) \|_{L^\infty\to L^\infty} \frac{C}{|t-s|^{3/2}}\| D_{x_j}\mathcal N(|u(x,s)|)u(x,s)\|_{L^1}ds\\
        \leq & \int_{t-1}^t C_2|t-s|^{\epsilon_p-3/2}\sum\limits_{j}\| D_{x_j}\mathcal N(|u(x,s)|)u(x,s)\|_{L^1}ds \\
        \leq & \frac{C_2}{\epsilon_p-1/2} C(\mathscr{M}, \mathscr{E}) 
       \sup_{ s \in [t-1,t]} \|u(s)\|_{L^\infty}^{1-\kappa} 
    \end{aligned}
\end{equation}
for some positive constant $C_2$. 

Next we estimate the low-frequency piece \eqref{eq:low freq}. 
We note that the $L^\infty_x$-norm of $u_{\mathrm{int},2,j,l}(t)$ can be bounded by the $L^1$-norm of its Fourier transform. We will use \eqref{eq:nonlinear Lp gradient bound} of Proposition~\ref{prop:N basic properties} for some $p > 1$ for the function $D_{x_j}\mathcal N(|u(x,s)|)u(x,s)$. By the Hausdorff-Young inequality its Fourier transform is in $L^{p'}$. Thus we need to have the other factor in $L^p$, so that its product lies in $L^1$. We compute 
\begin{equation}\label{eq:low energy}
\begin{aligned}
&\|\sum\limits_{j}u_{\mathrm{int},2,j,l}(t)\|_{L^\infty(\R^3_x)} \leq \|\sum\limits_{j}\hat u_{\mathrm{int},2,j,l}(t)\|_{L^1(\R^3_\xi)}\\
\leq& \frac{C_p}{(2\pi)^3} \int_{t-1}^t \sum\limits_j \| \frac{\xi_j}{|\xi|^2+1} \chi\big( \frac{|\xi|^2}{|t-s|^{-2\epsilon_p}} \big) \|_{L^p(\R^3_\xi)} \| D_j\mathcal N(|u(x,s)|)u(x,s)\|_{L^p(\R^3_x)} ds\\
    \leq & C_3C_p C(\mathscr{M}, \mathscr{E}) \int_{t-1}^t |t-s|^{-\epsilon_p(3-p)/p} \, ds \ \Big( \sup_{s \in [t-1, t]} \| u(s) \|_{L^\infty_x}^{1-\kappa} \Big)\\
    =& \frac{C_3C_p C(\mathscr{M}, \mathscr{E})}{1-\epsilon_p\cdot \frac{3-p}{p}} \ \Big(\sup_{s \in [t-1, t]} \| u(s) \|_{L^\infty_x}^{1-\kappa} \Big)
    \end{aligned}
\end{equation}
for some positive constants $C_3$ and $C_p$, provided that $\epsilon_p < p(3-p)^{-1}$. Noting that $p(3-p)^{-1} > 1/2$, we can choose $\epsilon_p$ in the range  
\begin{equation}
    \epsilon_p\in (1/2, p/(3-p))
\end{equation}
so that both conditions $\epsilon_p -1/2>0$ and $1-\epsilon_p\cdot \frac{3-p}{p}>0$ are satisfied. 
Following a similar, easier argument yields 
\begin{equation}
    \|\frac{1}{\Delta+1}u_{\mathrm{int},2}(t)\|_{L^\infty}\leq C_4 C(\mathscr{M}, \mathscr{E})
\end{equation}
for some constant $C_4>0$. Combining these estimates, we obtain
\begin{multline}\label{eq:Linfty combined}
   \sup\limits_{t \in [0, T]}\|u(t)\|_{L^\infty} \leq \frac{1}{(2\pi)^{3}}\| u(0) \|_{H^2}+2CC(\mathscr{M}, \mathscr{E})+\frac{C_2}{\epsilon_p-1/2}C(\mathscr{M}, \mathscr{E}) \Big(\sup_{t \in [0,T]} \| u(t) \|_{L^\infty_x}\Big)^{1-\kappa} \\ 
  +\frac{C_3 C(\mathscr{M}, \mathscr{E}) \Big(\sup_{t \in [0,T]} \| u(t) \|_{L^\infty_x}\Big)^{1-\kappa}}{1-\epsilon_p\cdot \frac{3-p}{p}}+C_4 C(\mathscr{M}, \mathscr{E}).
\end{multline}
For $H^2$ initial data, the solution $u(t)$ remains in $H^2(\R^3)$ for each $t$ \cite[Theorem 5.3.1]{Caz-book-2003}, hence the supremum of the $L^\infty$ norm over $t \in [0, T]$ is finite. As we have a smaller power of this supremum on the RHS, \eqref{eq:Linfty combined} shows the uniform boundedness of this supremum over all $T \in \R$. 
This completes the proof. 
\end{proof}

\begin{remark}\label{rem:ext} Thanks to Proposition~\ref{prop:Linfty}, we can extend the range of $p$ in \eqref{eq:nonlinear Lp bound} to all $p \in [1, \infty]$, and in \eqref{eq:nonlinear Lp gradient bound} to all $p \in [1,2]$. Moreover, the range in \eqref{eq:nonlinear Lp gradient bound} improves further to all $p \in [1, 6)$ once we have proved the next proposition. 
\end{remark}

\begin{proposition}  \label{prop:gradient-L2to6-bound}
The $L^q$ norm of the gradient, $\| \nabla u \|_{L^q_x}$ is uniformly bounded in time for $2 \leq q < 6$. 
\end{proposition}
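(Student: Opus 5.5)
We will prove the bound on $\nabla u$ by running the same Duhamel argument as in the proof of Proposition~\ref{prop:Linfty}, now for $\nabla u$ in $L^q_x$ rather than $u$ in $L^\infty_x$. The case $q=2$ is Proposition~\ref{prop:N basic properties}(i), so fix $q \in (2,6)$. By Duhamel,
\begin{equation*}
\nabla u(t) = e^{-it\Delta}\nabla u(0) - i\int_0^t e^{-i(t-s)\Delta}\, F(s)\, ds, \qquad F := \nabla\big(\mc{N}(|u|)u\big).
\end{equation*}
For the free term we use $\|e^{-it\Delta}\nabla u(0)\|_{L^q} \le C\|e^{-it\Delta}\nabla u(0)\|_{H^1} \le C\|u(0)\|_{H^2}$, by Sobolev embedding $H^1\subset L^q$ for $q\le 6$.

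The nonlinear input comes from Remark~\ref{rem:ext}. Since $u \in L^\infty_{t,x}$ by Proposition~\ref{prop:Linfty}, the pointwise bound $|F| \le C(\|u\|_{L^\infty})\,|\nabla u|\,|u|^2$ together with H\"older's inequality gives $\|F(s)\|_{L^p} \le C$ uniformly in $s$ for every $p\in[1,2]$. This uses $\nabla u \in L^2$ and $u\in L^r$ for $r\in[2,\infty]$.

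We then split the Duhamel integral into $s\in[0,t-1]$ and $s\in[t-1,t]$.
\begin{itemize}
\item On $[0,t-1]$ we apply the dispersive estimate $L^{q'}\to L^q$, which has decay $|t-s|^{-3(1/2-1/q)}$. This exponent is $<1$ for $q<6$, so the bare dispersive bound does not make the long-time tail integrable. Instead we use the $L^1\to L^\infty$ estimate to get $\|e^{-i(t-s)\Delta}F\|_{L^\infty}\le C|t-s|^{-3/2}\|F\|_{L^1}$, and interpolate it with the $L^2$ bound $\|e^{-i(t-s)\Delta}F\|_{L^2}\le\|F\|_{L^2}$. Together these give $\|e^{-i(t-s)\Delta}F\|_{L^q}\le C|t-s|^{-3(1/2-1/q)}\|F\|_{L^1}^{1-2/q}\|F\|_{L^2}^{2/q}$, which is the same non-integrable decay. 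So the pointwise-in-time approach genuinely fails for the far region, and we will not rely on it.
\item We therefore control the whole integral by Strichartz estimates on unit time intervals. Choose the admissible pair $(r,q)$ with $2/r = 3(1/2-1/q)$; since $q\in(2,6)$, this gives $r\in(2,\infty)$.
\end{itemize}

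The cleaner route is to use the $L^2$-boundedness of the Duhamel integral, via the Rodnianski--Schlag mechanism from the proof of Theorem~\ref{thm:RS}. We differentiate the equation:
\begin{equation*}
(D_t+\Delta)\nabla u + \nabla(\mc{N}(|u|)u) = 0,
\end{equation*}
where $\nabla(\mc{N}(|u|)u) = V_1\nabla u + V_2\nabla\bar u$ with $|V_j|\lesssim |u|^2$ bounded, using Proposition~\ref{prop:Linfty}. Now work on a unit interval $I=[t-1,t]$. The inhomogeneous Strichartz estimate with dual endpoint $L^1_sL^2_x$ gives
\begin{equation*}
\|\nabla u\|_{L^r_IL^q_x} \le C\|\nabla u(t-1)\|_{L^2} + C\|V\|_{L^\infty}\|\nabla u\|_{L^1_IL^2_x} \le C(\mathscr M,\mathscr E).
\end{equation*}
This yields an $L^r_I L^q_x$ bound that is uniform in the interval, but not yet a pointwise-in-time bound. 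To upgrade it, we use Duhamel from time $t-1$ only and estimate $\|\nabla u(t)\|_{L^q}$ directly. The free part $e^{-i\Delta}\nabla u(t-1)$ loses derivatives, which we cannot afford. So instead we use $H^{1}$-level smoothing: $\|\nabla u(t)\|_{L^q}\le C\|u(t)\|_{W^{1,q}}$, and bound $\|\Delta u(t)\|_{L^2}$ uniformly in time. This is the real content, and we carry it out as follows.
\begin{enumerate}
\item Show that $\partial_t u$ is uniformly bounded in $L^2$. Differentiating in time gives $(D_t+\Delta)u_t + V_1u_t+V_2\bar u_t=0$ with bounded $V_j$. A Gronwall argument alone grows exponentially, so we instead invoke the Strichartz bootstrap from the proof of Theorem~\ref{thm:RS} on unit intervals, combined with global decay of $\int_0^{t-1}$ pieces, mirroring the near/far split of Proposition~\ref{prop:Linfty}.
\item Use the equation, $\Delta u = -D_tu-\mc{N}(|u|)u$, to convert the bound on $u_t$ into a uniform bound on $\Delta u$ in $L^2$.
\item Conclude by Sobolev embedding $H^2\subset W^{1,q}$ for $q\le6$; the range is strict at $6$ only if step 1 gives slightly less than $H^2$.
\end{enumerate}

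The main obstacle is step 1, namely obtaining a uniform-in-time bound for $u_t$, or equivalently for $\Delta u$, without exponential growth. If this fails, the fallback is to repeat the high/low frequency splitting of Proposition~\ref{prop:Linfty} at fractional regularity $1+\sigma$ with $\sigma<1/2$ corresponding to $q<6$. That splitting exploits the fact that $F$ lies in $L^1\cap L^2$ uniformly and that its gradient gains a factor $\|u\|_{L^\infty}^{1-\kappa}$; the derivative falling on the nonlinearity, which costs $\nabla^2u$, is handled by the same sublinear bootstrap.
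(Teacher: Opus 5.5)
There is a genuine gap. Your main route rests on step 1, a uniform-in-time $L^2$ bound for $\partial_t u$ (equivalently a uniform $H^2$ bound), and nothing supports it. The Rodnianski--Schlag bootstrap of Theorem~\ref{thm:RS} needs the potential to be \emph{small} in $L^\infty_tL^{3/2}_x$. That fails for the large, possibly non-scattering solutions considered here. On unit intervals, Strichartz only gives $\|u_t(t)\|_{L^2}\le C\|u_t(t-1)\|_{L^2}$ with $C>1$, i.e.\ exponential growth again. Dispersive decay of the far Duhamel piece controls $L^\infty$, not the $L^2$ norm you need. A uniform higher-Sobolev bound for general large solutions is not available. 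The paper never uses one: it only uses $u(t)\in H^2$ for each fixed $t$. The fractional-regularity fallback is too vague to evaluate.

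The missing idea is that the far-region argument you discarded does work. You must interpolate the \emph{integrated} function $\nabla u_{\mathrm{int},1}(t)$, not the kernel bound pointwise in $s$. Write $u=e^{-it\Delta}u(0)+u_{\mathrm{int},1}+u_{\mathrm{int},2}$ as in Proposition~\ref{prop:Linfty}.

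\emph{Far piece, $L^\infty$ bound.} The $L^1\to L^\infty$ estimate and $\sup_s\|\nabla(\mc{N}(|u|)u)(s)\|_{L^1}<\infty$ give a uniform $L^\infty$ bound for $\nabla u_{\mathrm{int},1}(t)$, since $\int_0^{t-1}|t-s|^{-3/2}\,ds\le 2$.

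\emph{Far piece, $L^2$ bound.} This does not come from the Duhamel integral, whose trivial bound grows like $t$. It comes by subtraction: $\nabla u_{\mathrm{int},1}(t)=\nabla u(t)-e^{-it\Delta}\nabla u(0)-\nabla u_{\mathrm{int},2}(t)$. Each term on the right is uniformly bounded in $L^2$, by the energy bound, by unitarity, and by $\int_{t-1}^t\|\nabla(\mc{N}(|u|)u)(s)\|_{L^2}\,ds$ respectively. Then $\|g\|_{L^q}\le\|g\|_{L^2}^{2/q}\|g\|_{L^\infty}^{1-2/q}$ finishes the far piece.

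\emph{Near piece.} For $\nabla u_{\mathrm{int},2}$, the $L^{q'}\to L^q$ estimate you mentioned is exactly the right tool. Its singularity $|t-s|^{-3(1/2-1/q)}$ is integrable at $s=t$ because $q<6$. Also $\nabla(\mc{N}(|u|)u)$ is uniformly bounded in $L^{q'}$, since $q'\in(6/5,2]$. This is where the restriction $q<6$ enters, and no $H^2$-in-time control is needed.

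Your treatment of the free term and of the nonlinear input is fine.
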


\begin{proof}
We decompose the solution $u(t)$ as $u(t) = e^{-it\Delta} u(0) + u_{\mathrm{int}, 1}(t) + u_{\mathrm{int}, 2}(t)$ as in the previous proof
and prove that the gradient of each term is in $L^q$ for $2 \leq q < 6$. 

For the free term $e^{-it\Delta} u(0)$ this follows from Sobolev embedding. 

For the third term $u_{\mathrm{int},2}$ we use the $L^{q'} \to L^q$ mapping property of the free propagator, with a bound $|t-s|^{-\alpha}$ with $\alpha < 1$, thus integrable in $s$. 
 For this, we require that $\nabla \mc{N}(|u|) u$ is in $L^{q'}$ uniformly in time, as observed in Remark~\ref{rem:ext}. 

 For the second term, we prove that the gradient is in $L^2$ and in $L^\infty$. The $L^2$ gradient bound follows because it holds for  $u(t)$ and for the other two terms in the decomposition. 
And its spatial derivative is uniformly bounded in $L^\infty$, using the $L^1 \to L^\infty$ mapping property of the free propagator and \eqref{eq:nonlinear Lp gradient bound} with $p=1$. By interpolation, the gradient of this term is bounded in $L^q$ for $2 \leq q \leq \infty$. 
\end{proof}

\begin{corollary}\label{col: holdercontinue} The solution $u(t)$ is H\"older continuous with exponent $\beta$ for any $\beta < 1/2$, uniformly in time. In particular, the H\"older seminorm \eqref{eq:Holder seminorm} is uniformly bounded. 
\end{corollary}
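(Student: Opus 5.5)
The Hölder bound follows from Morrey's inequality, using the gradient bounds from Proposition~\ref{prop:gradient-L2to6-bound}. In $\R^3$, Morrey gives, for $q > 3$,
\[
|f(x) - f(y)| \leq C_q \|\nabla f\|_{L^q(\R^3)} |x-y|^{1 - 3/q}
\]
for every $f \in W^{1,q}_{\mathrm{loc}}$ with $\nabla f \in L^q$.

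Given $\beta < 1/2$, choose $q \in (3,6)$ with $1 - 3/q = \beta$, that is, $q = 3/(1-\beta)$. This is possible because $\beta \mapsto 3/(1-\beta)$ maps $[0,1/2)$ onto $[3,6)$. For $\beta = 0$ or very small $\beta$, we can instead take any larger $\beta$: $u$ is bounded by Proposition~\ref{prop:Linfty}, and for $|x-y| \le 1$ a Hölder bound with a larger exponent implies one with a smaller exponent. For $|x-y| \geq 1$, the trivial bound $|u(x,t)-u(y,t)| \le 2\|u\|_{L^\infty_{t,x}}$ suffices.

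By Proposition~\ref{prop:gradient-L2to6-bound}, $\|\nabla u(t)\|_{L^q_x}$ is bounded uniformly in $t$ for this $q < 6$. Morrey's inequality then bounds the seminorm \eqref{eq:Holder seminorm} by $C_q \sup_t \|\nabla u(t)\|_{L^q}$, uniformly in time.

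The only technical point is justifying Morrey for $u(t)$. We have $u(t) \in H^1$ and $\nabla u(t) \in L^q$, so $u(t) \in W^{1,q}_{\mathrm{loc}}$. Morrey then yields a continuous representative with the stated estimate. Since $u(t) \in H^2 \subset C^0$, this representative is $u(t)$ itself.

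There is no real obstacle here: the substantive input, the uniform $L^q$ gradient bound for $q$ up to (but excluding) $6$, was already proved in Proposition~\ref{prop:gradient-L2to6-bound}. The endpoint $\beta = 1/2$ is excluded precisely because $q = 6$ is not available.
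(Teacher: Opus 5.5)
Your proposal is correct and follows the paper's own argument, which says only that the corollary is an immediate consequence of Proposition~\ref{prop:gradient-L2to6-bound} together with Sobolev (Morrey) embedding, $\dot W^{1,q}\hookrightarrow C^{0,1-3/q}$ for $3<q<6$. Your write-up makes the choice $q=3/(1-\beta)$ explicit and checks that the continuous representative Morrey produces is $u(t)$ itself.
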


\begin{proof} An immediate consequence of the previous Proposition and the Sobolev embedding theorem. 
\end{proof}

\begin{remark}\label{rem:nonlinear potential bounds cone} For later use, we observe that we can estimate suitable $L^p$ norms of $\mc{N}(|u|)$ and $\nabla \mc{N}(|u|)$, on a spacetime cone $C$ in terms of the $L^\infty$ norm of $u$ on $C$. In fact, for $1 < p \leq \infty$ we can estimate
\begin{equation}\label{eq:nonlinear pot cones}
\| \mc{N}(|u(t)|) \|_{L^p(C_t)} \leq C(\mathscr{M}, \mathscr{E}) \| u(t) \|_{L^\infty(C_t)}^\alpha, \quad p > 1, \quad \alpha = \alpha(p) > 0
\end{equation}
(where $C_t$ is the time slice of the cone $C$) and 
\begin{equation}\label{eq:gradient nonlinear pot cones}
\| \nabla \mc{N}(|u(t)|) \|_{L^q(C_t)} \leq C' \| u(t) \|_{L^\infty(C_t)}, \quad 2 \leq q < 6
\end{equation}
by using Proposition~\ref{prop:gradient-L2to6-bound}, Proposition~\ref{prop:Linfty}, and the pointwise estimate 
$$
|\nabla \mc{N}(|u|)| \leq |\nabla u| \Big( |u| + |u|^{3-2\eta} \Big).
$$
\end{remark}

\section{Estimates on the perturbed evolution}\label{sec:perturbed ev}
\label{sec:perturbed-evolution}

We prove some preliminary estimates for the spacetime bound of our solutions.
Let $U_L(t,s)$ be the evolution operator for the Schr\"odinger operator with real-valued time-dependent potential $V_L(x,t)$. That is, $U_L(t,s)$ satisfies 
\begin{equation} \label{eq:def-UL}
    i\partial_t U_L(t,s) = \left( \Delta + V_L(t) \right) U_L(t,s), \quad U_L(s, s) = \operatorname{Id}. 
\end{equation}
(When we apply this lemma, the potential $V_L(x,t)$ will be the cut-off nonlinear potential $\chi(x, t)\mc{N}(|u(x,t)|)$, where $\chi$ cuts off to a conic region where $\| u \|_{L^\infty}$ is small.) 

\begin{lemma}\label{lem:UL properties} Consider a real-valued potential function $V_L(x,t)$ such that
\begin{equation}
  \|V_L\|_{L^\infty_t L^1_x} = K_0 <\infty, \quad \|V_L\|_{L^\infty_t L^\infty_x} = \delta_0 \ll 1, 
\end{equation}
where $\delta_0$ is sufficiently small depending on $K_0$.
Then the evolution operator $U_L(t, s)$ satisfies 
\begin{itemize}
    \item[(i)] $L^2$ mass conservation: 
    \begin{equation}\label{eq: mass conservation}
     \| U_L(t,s) f \|_{L^2_x} = \| f \|_{L^2_x} \text{ for all $t, s$. }  
    \end{equation}
    \item[(ii)] Endpoint Strichartz estimate: for each $s \in \R$, 
    \begin{equation}\label{eq:UL endpt Strichartz}
  \| U_L(t, s) f \|_{L^2_t L^6_x} \leq C \| f \|_{L^2_x}      
    \end{equation}
 where $C$ is uniform in $s$. 
 \item[(iii)] Dispersive estimate:
 \begin{equation}\label{eq: UL local decay}
    \| U_L(t,s)f\|_{L^2_x+L^\infty_x} \lesssim \frac{1}{\langle t-s\rangle^{3/2}}\|f\|_{L^2_x\cap L^1_x}.
\end{equation}
Assume in addition that $\nabla_x V_L \in L^\infty_t (L^1 \cap L^3)_x$, with 
\begin{equation}
  \|\nabla_x V_L\|_{L^\infty_t L^1_x} = K_1 <\infty, \quad \|\nabla_x V_L\|_{L^\infty_t L^3_x} = \delta_1 \ll 1,
\end{equation}
where $\delta_1$ is sufficiently small depending on $K_0$ and $K_1$. 
Then we also have 
    \item[(iv)] Uniform boundedness on $H^1$:
    \begin{equation}\label{eq:H1 uniform bd}
 \| U_L(t,s) f \|_{H^1} \leq C \| f \|_{H^1},        
    \end{equation}
    where $C$ is uniform in $t, s$. 
    \item[(v)] 
    Multiplication by $V_L$ is a map from  $W^{1,2}_x + W^{1, \infty}_x$ to $W^{1,2}_x$ with small norm and into  $W^{1,1}_x$ with uniformly bounded norm.
    In addition, the following dispersive estimate holds: 
    \begin{equation}\label{eq: UL local decay deriv}
    \| U_L(t,s)f\|_{W^{1,2}_x + W^{1, \infty}_x} \lesssim \frac{1}{\langle t-s\rangle^{3/2}}\|f\|_{W^{1,2}_x \cap W^{1,1}_x}.
\end{equation}
\end{itemize}
\end{lemma}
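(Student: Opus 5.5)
Each item comes from Duhamel's formula around the free flow, with perturbative bounds closed by smallness of $\delta_0$ or $\delta_1$. Item (i) is immediate: $V_L$ is real, so $\Delta+V_L(t)$ is self-adjoint for each $t$. Differentiating $\|U_L(t,s)f\|_{L^2}^2$ in $t$ gives zero, first for nice $f$ and then for all $f \in L^2$ by density. The operator $U_L$ itself is built by a Picard iteration in $C_tL^2_x$, since $V_L\in L^\infty_{t,x}$.

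For item (ii) I repeat the proof of Theorem~\ref{thm:RS}, starting from time $s$. We have $\|V_L\|_{L^\infty_tL^{3/2}_x}\le K_0^{2/3}\delta_0^{1/3}$, which is small once $\delta_0\ll K_0^{-2}$. The inhomogeneous endpoint Strichartz estimate and H\"older then give
$$\|U_L(\cdot,s)f\|_{L^2_tL^6_x}\le C\|f\|_2+CK_0^{2/3}\delta_0^{1/3}\|U_L(\cdot,s)f\|_{L^2_tL^6_x}.$$
I run this on finite intervals, where the norm is a priori finite, and absorb the last term. The constant does not depend on $s$, and the estimate covers both $t>s$ and $t<s$.

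For item (iii), when $|t-s|\le 1$ the bound is just (i), since $\|f\|_{L^2} \le \|f\|_{L^2\cap L^1}$. For $|t-s|\ge1$ I would set
$$M(T)=\sup_{|t-s|\le T}\langle t-s\rangle^{3/2}\|U_L(t,s)f\|_{L^2+L^\infty}$$
and write $U_L(t,s)f=e^{-i(t-s)\Delta}f-i\int_s^te^{-i(t-\tau)\Delta}V_LU_L(\tau,s)f\,d\tau$. The free term is handled by dispersion plus unitarity. For the integral, multiplication by $V_L$ maps $L^2+L^\infty$ into $L^1\cap L^2$ with norm $\lesssim K_0^{1/2}\delta_0^{1/2}+\delta_0$. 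The reason is that $V_L\in L^2\cap L^\infty$, since $\|V_L\|_2^2\le K_0\delta_0$, and $V_Lg\in L^1\cap L^2$ for $g\in L^2+L^\infty$. The free propagator maps $L^1\cap L^2\to L^2+L^\infty$ with bound $\langle t-\tau\rangle^{-3/2}$. The convolution $\int\langle t-\tau\rangle^{-3/2}\langle\tau-s\rangle^{-3/2}d\tau\lesssim\langle t-s\rangle^{-3/2}$ then gives $M(T)\le C+C(K_0\delta_0)^{1/2}M(T)$. Since $M(T)$ is finite by (i), we can absorb and obtain (iii).

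Items (iv)–(v) follow the same scheme one derivative up, and I expect them to be the main obstacle. The difficulty is that $\nabla$ does not commute with $V_L$, so $[\nabla, V_L]=\nabla V_L$ has to be controlled using only $\nabla V_L\in L^1\cap L^3$, with $L^3$ small. For (iv), I would differentiate the Duhamel formula to get $\nabla U_L f=e^{-i(t-s)\Delta}\nabla f-i\int e^{-i(t-\tau)\Delta}(V_L\nabla U_Lf+(\nabla V_L)U_Lf)d\tau$. I then estimate $\nabla U_L f$ in $L^\infty_tL^2_x\cap L^2_tL^6_x$ using the full Strichartz family.
\begin{itemize}
\item The $V_L\nabla U_Lf$ term goes into the dual endpoint $L^2_tL^{6/5}_x$ as in (ii).
\item The $(\nabla V_L)U_Lf$ term satisfies $\|(\nabla V_L)U_L f\|_{L^{6/5}}\le\delta_1^{\theta}K_1^{1-\theta}\|U_Lf\|_{L^6}$ after interpolating $\nabla V_L$ into $L^{3/2}$. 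It is therefore bounded by a small constant times (ii) applied to $f$.
\end{itemize}
Together these give a uniform $H^1$ bound. For (v), multiplication by $V_L$ and the product rule give:
\begin{itemize}
\item $V_L g$ with $g \in W^{1,2} + W^{1,\infty}$ lies in $W^{1,2}$ with small norm, using $V_L\in L^2\cap L^\infty$ and $\nabla V_L\in L^2\cap L^3$. Here I interpolate $L^1$ and $L^3$, noting that $\|\nabla V_L\|_{L^2} \le K_1^{1/4}\delta_1^{3/4}$ is small.
\item $V_L g$ lies in $W^{1,1}$ with bounded norm, via $L^1$ bounds on $V_L$ and $\nabla V_L$.
\end{itemize}
The weighted bootstrap of (iii) is then repeated with $W^{1,2}+W^{1,\infty}$ and $W^{1,2}\cap W^{1,1}$ in place of $L^2+L^\infty$ and $L^2\cap L^1$. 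The free flow commutes with $\nabla$, and the short-time range $|t-s|\le1$ is handled by (iv). The delicate point is the $W^{1,2}$ smallness of multiplication by $V_L$. The $W^{1,\infty}$ part of $g$ multiplied by $\nabla V_L$ needs $\nabla V_L\in L^2$, which is why the $L^3$ smallness together with $L^1$ boundedness (giving small $L^2$ via $\delta_1$) is essential.
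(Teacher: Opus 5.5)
Items (i), (ii), (iv) and the product estimates at the start of (v) are fine. For (iv) you differentiate the free Duhamel formula, whereas the paper uses $D_jU_L(t,s)f=U_L(t,s)D_jf-i\int_s^tU_L(t,r)(D_jV_L)U_L(r,s)f\,dr$ together with the dual of (ii); either works. The genuine gap is in your bootstrap for (iii), and consequently in (v).

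\textbf{The smallness claim in (iii) is false.} You claim that multiplication by $V_L$ maps $L^2+L^\infty$ into $L^1\cap L^2$ with norm $\lesssim K_0^{1/2}\delta_0^{1/2}+\delta_0$. Three of the four components are indeed small, but the $L^\infty\to L^1$ component is not. For $g\in L^\infty$ the best available bound is $\|V_Lg\|_{L^1}\le\|V_L\|_{L^1}\|g\|_{L^\infty}=K_0\|g\|_{L^\infty}$, and it is sharp (take $g\equiv1$). Only $\delta_0$ is small; $K_0$ is arbitrary, and in the application it is of the size of the mass. You cannot simply drop this piece, because the $\langle t-\tau\rangle^{-3/2}$ decay of the Duhamel term comes from the $L^1\to L^\infty$ dispersive estimate, which needs $V_LU_L(\tau,s)f\in L^1$. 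With the correct operator norm, your inequality reads $M(T)\le C+C'K_0M(T)$, where $C'$ is the $O(1)$ constant in $\int\langle t-\tau\rangle^{-3/2}\langle\tau-s\rangle^{-3/2}d\tau\le C'\langle t-s\rangle^{-3/2}$. This does not close. Your own computation in (v) correctly shows that the $W^{1,1}$-valued map is only bounded, so ``repeating the bootstrap of (iii)'' fails there for the same reason.

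\textbf{The missing idea is to split the time integral.} For $t-s\ge2R$, split the Duhamel integral into $[s,s+R]$, $[s+R,t-R]$ and $[t-R,t]$, and take smallness from a different source on each piece.
\begin{itemize}
\item On the middle interval, use only the bounded map $L^2+L^\infty\to L^1$, with norm $\lesssim_{K_0}1$. The smallness comes from the convolution: $\int_{s+R}^{t-R}(t-\tau)^{-3/2}(\tau-s)^{-3/2}d\tau\lesssim R^{-1/2}(t-s)^{-3/2}$.
\item On $[t-R,t]$, use the small map $L^2+L^\infty\to L^2$, with norm $\eta_0\le\delta_0+(K_0\delta_0)^{1/2}$, together with $L^2$-unitarity of the free flow. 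This gives a contribution $\lesssim R\eta_0(t-s)^{-3/2}M(T)$.
\item On $[s,s+R]$ no absorption is needed. Mass conservation and $\|V_L\|_{L^2}\le(K_0\delta_0)^{1/2}$ give $\|V_LU_L(\tau,s)f\|_{L^1}\lesssim\|f\|_{L^2}$, and since $t-\tau\ge(t-s)/2$ the free dispersive estimate supplies the decay. This is a pure source term.
\end{itemize}
Choose $R$ large depending on $K_0$ so that $C_{K_0}R^{-1/2}\le\frac1{10}$. Then choose $\delta_0$ small depending on $K_0$ and $R$. This order of choices is exactly why the lemma says $\delta_0$ depends on $K_0$.

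For (v), run the same three-interval argument. On the middle interval use the bounded map $W^{1,2}+W^{1,\infty}\to W^{1,1}$, bounded in terms of $K_0$ and $K_1$. On $[t-R,t]$ use the small $W^{1,2}$-valued map. On $[s,s+R]$ use (iv) together with $\|V_Lh\|_{W^{1,1}}\lesssim(\|V_L\|_{L^2}+\|\nabla V_L\|_{L^2})\|h\|_{W^{1,2}}$.
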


\begin{proof}
We only need to consider times $t\geq 0$.
    Property (i), \eqref{eq: mass conservation}, is immediate from the self-adjointness of the time-dependent Schr\"odinger operator $\Delta + V_L$. 

 Property (ii), the endpoint Strichartz estimate, \eqref{eq:UL endpt Strichartz}, follows from the proof of \cite[Lemma~5.2]{Rodnianski-Schalg-Strichartz}, which only requires the smallness of the potential in $L^\infty_tL^{3/2}_x$ which is satisfied in our setting.

Property (iii), the dispersive estimate \eqref{eq: UL local decay}, is proved as follows. It suffices to consider the case $s=0$. By interpolation,
\begin{align}
\sup_t\|V_L(t)\|_{L^2_x}
\leq K_0^{1/2}\delta_0^{1/2},
\qquad
\sup_t\|V_L(t)\|_{L^{3/2}_x}
\leq K_0^{2/3}\delta_0^{1/3}.
\label{eq:UL-potential-interpolation}
\end{align}
For every decomposition $h=h_2+h_\infty$, with $h_2\in L^2_x$ and $h_\infty\in L^\infty_x$, we have
\begin{align*}
\|V_L(t)h\|_{L^1_x}
&\leq
\|V_L(t)\|_{L^2_x}\|h_2\|_{L^2_x}
+\|V_L(t)\|_{L^1_x}\|h_\infty\|_{L^\infty_x}\\
&\lesssim_{K_0}
\|h_2\|_{L^2_x}+\|h_\infty\|_{L^\infty_x},
\end{align*}
and
\begin{align*}
\|V_L(t)h\|_{L^2_x}
&\leq
\|V_L(t)\|_{L^\infty_x}\|h_2\|_{L^2_x}
+\|V_L(t)\|_{L^2_x}\|h_\infty\|_{L^\infty_x}\\
&\leq
\eta_0
(\|h_2\|_{L^2_x}+\|h_\infty\|_{L^\infty_x}),
\end{align*}
where $\eta_0=\delta_0+K_0^{1/2}\delta_0^{1/2}$. Taking the infimum over all such decompositions gives
\begin{align}
\|V_L(t)h\|_{L^1_x}
&\lesssim_{K_0}\|h\|_{L^2_x+L^\infty_x},
\label{eq:UL-potential-L1-map}\\
\|V_L(t)h\|_{L^2_x}
&\leq\eta_0\|h\|_{L^2_x+L^\infty_x}.
\label{eq:UL-potential-L2-map}
\end{align}
Thus multiplication by $V_L$ is bounded from $L^2_x+L^\infty_x$ to $L^1_x$, while its norm from $L^2_x+L^\infty_x$ to $L^2_x$ tends to zero as $\delta_0\to0$ with $K_0$ fixed.

Fix $R\geq2$ and $T\geq2R$, and set
\begin{align*}
A_T
=
\sup_{|t_0|\leq T}
\langle t_0\rangle^{3/2}
\|U_L(t_0,0)f\|_{L^2_x+L^\infty_x}.
\end{align*}
For $2R\leq t\leq T$, Duhamel's formula gives
\begin{align}
\begin{split}
U_L(t,0)f
= & e^{-it\Delta}f
-i\int_0^R e^{-i(t-r)\Delta}V_L(r)U_L(r,0)f\,dr - i\int_R^{t-R}e^{-i(t-r)\Delta}V_L(r)U_L(r,0)f\,dr\\
&-i\int_{t-R}^t e^{-i(t-r)\Delta}V_L(r)U_L(r,0)f\,dr.
\end{split}
\label{eq:UL-three-interval-decomposition}
\end{align}

For the first integral, \eqref{eq: mass conservation} and \eqref{eq:UL-potential-interpolation} give
\begin{align*}
\|V_L(r)U_L(r,0)f\|_{L^1_x}
\leq \|V_L(r)\|_{L^2_x}
\|U_L(r,0)f\|_{L^2_x} \leq
K_0^{1/2}\delta_0^{1/2}\|f\|_{L^2_x}.
\end{align*}
Since $t-r\geq t/2$ for $0\leq r\leq R$, the free dispersive estimate yields
\begin{align}
\left\|
\int_0^R e^{-i(t-r)\Delta}V_L(r)U_L(r,0)f\,dr
\right\|_{L^\infty_x}
\lesssim
R K_0^{1/2}\delta_0^{1/2}
t^{-3/2}\|f\|_{L^2_x}.
\label{eq:UL-disp-initial}
\end{align}

For the middle integral, \eqref{eq:UL-potential-L1-map} and the definition of $A_T$ give
\begin{align} \label{eq:UL-middle-bound}
\left\|
\int_R^{t-R}e^{-i(t-r)\Delta}V_L(r)U_L(r,0)f\,dr
\right\|_{L^\infty_x} \lesssim_{K_0}
A_T
\int_R^{t-R}
(t-r)^{-3/2}\langle r\rangle^{-3/2}\,dr \lesssim
R^{-1/2}t^{-3/2}A_T,
\end{align}
where the last inequality is obtained by splitting the time integral at $t/2$.

For the final integral, the unitarity of the free flow and \eqref{eq:UL-potential-L2-map} imply
\begin{align}
\begin{split}
\left\|
\int_{t-R}^t e^{-i(t-r)\Delta}V_L(r)U_L(r,0)f\,dr
\right\|_{L^2_x}
\leq \eta_0 A_T
\int_{t-R}^t\langle r\rangle^{-3/2}\,dr
\lesssim
R\eta_0 t^{-3/2}A_T.
\end{split}
\label{eq:UL disp est 2}
\end{align}
We first choose $R$ sufficiently large, depending only on $K_0$, so that $C_{K_0}R^{-1/2}\leq\frac{1}{10}$.
Then we choose $\delta_0$ sufficiently small, depending on $K_0$ and $R$, so that $CR\eta_0\leq\frac{1}{10}$.
Combining the free dispersive estimate with
\eqref{eq:UL-disp-initial}, \eqref{eq:UL-middle-bound}, and
\eqref{eq:UL disp est 2}, we obtain
\begin{align*}
\sup_{2R\leq t\leq T}
t^{3/2}
\|U_L(t,0)f\|_{L^2_x+L^\infty_x}
\leq
C_{K_0,R}\|f\|_{L^1_x\cap L^2_x}
+\frac15 A_T.
\end{align*}
For $t\leq2R$, \eqref{eq: mass conservation} gives
\begin{align*}
\langle t\rangle^{3/2}
\|U_L(t,0)f\|_{L^2_x+L^\infty_x}
\leq
\langle2R\rangle^{3/2}\|f\|_{L^2_x}.
\end{align*}
It follows that
\begin{align*}
A_T
\leq
C_{K_0,R}\|f\|_{L^1_x\cap L^2_x}
+\frac15 A_T.
\end{align*}
After absorbing the last term and sending $T\to\infty$, we obtain
\begin{align*}
\sup_{t}
\langle t\rangle^{3/2}
\|U_L(t,0)f\|_{L^2_x+L^\infty_x}
\lesssim_{K_0}
\|f\|_{L^1_x\cap L^2_x}.
\end{align*}
Applying the same argument to the translated potential
$V_L(x,s+\cdot)$ proves \eqref{eq: UL local decay} for arbitrary $s$.

We next prove property (iv). Interpolation gives
\begin{align}
\sup_t\|\nabla V_L(t)\|_{L^{3/2}_x}
\leq
K_1^{1/2}\delta_1^{1/2}.
\label{eq:UL-gradient-interpolation}
\end{align}
The commutator identity is
\begin{align*}
D_jU_L(t,s)f
=
U_L(t,s)D_jf
-i\int_s^t
U_L(t,r)(D_jV_L(r))U_L(r,s)f\,dr.
\end{align*}
By the dual of \eqref{eq:UL endpt Strichartz},
\eqref{eq:UL-gradient-interpolation}, and
\eqref{eq:UL endpt Strichartz},
\begin{align*}
\sup_t
\left\|
\int_s^t
U_L(t,r)(D_jV_L(r))U_L(r,s)f\,dr
\right\|_{L^2_x}
&\lesssim
\|(D_jV_L)U_L(\cdot,s)f\|_{L^2_tL^{6/5}_x}\\
&\lesssim
\|D_jV_L\|_{L^\infty_tL^{3/2}_x}
\|U_L(\cdot,s)f\|_{L^2_tL^6_x}\\
&\lesssim_{K_1}
\|f\|_{L^2_x}.
\end{align*}
Combining this estimate with \eqref{eq: mass conservation} proves
\eqref{eq:H1 uniform bd}.

Finally, we prove property (v). In addition to
\eqref{eq:UL-potential-interpolation}, interpolation gives
\begin{align*}
\sup_t\|\nabla V_L(t)\|_{L^2_x}
\leq
K_1^{1/4}\delta_1^{3/4}.
\end{align*}
For $h_2\in W^{1,2}_x$, Hölder's inequality and Sobolev embedding give
\begin{align}
\|V_Lh_2\|_{W^{1,1}_x}
\lesssim
(\|V_L\|_{L^2_x}+\|\nabla V_L\|_{L^2_x})
\|h_2\|_{W^{1,2}_x},
\label{eq:UL-W12-to-W11}
\end{align}
and
\begin{align*}
\|V_Lh_2\|_{W^{1,2}_x}
\lesssim
(\|V_L\|_{L^\infty_x}+\|\nabla V_L\|_{L^3_x})
\|h_2\|_{W^{1,2}_x}.
\end{align*}
For $h_\infty\in W^{1,\infty}_x$, we have
\begin{align*}
& \|V_Lh_\infty\|_{W^{1,1}_x}
\lesssim
(\|V_L\|_{L^1_x}+\|\nabla V_L\|_{L^1_x})
\|h_\infty\|_{W^{1,\infty}_x},
\\ & \|V_Lh_\infty\|_{W^{1,2}_x}
\lesssim
(\|V_L\|_{L^2_x}+\|\nabla V_L\|_{L^2_x})
\|h_\infty\|_{W^{1,\infty}_x}.
\end{align*}
It follows that
\begin{align}
\|V_Lh\|_{W^{1,1}_x}
\lesssim_{K_0,K_1}
\|h\|_{W^{1,2}_x+W^{1,\infty}_x},
\label{eq:UL-potential-W11-map}\\
\|V_Lh\|_{W^{1,2}_x}
\lesssim
\eta_1
\|h\|_{W^{1,2}_x+W^{1,\infty}_x},
\label{eq:UL-potential-W12-map}
\end{align}
where $\eta_1 = \delta_0+\delta_1 +K_0^{1/2}\delta_0^{1/2} +K_1^{1/4}\delta_1^{3/4}$.

In particular, the first map is uniformly bounded, while the norm of the second map tends to zero as $\delta_0+\delta_1\to0$ with $K_0,K_1$ fixed.
Fix $R\geq2$, let $T\geq2R$, and set
\begin{align*}
B_T = \sup_{|t_0|\leq T}
\langle t_0\rangle^{3/2}
\|U_L(t_0,0)f\|_{W^{1,2}_x+W^{1,\infty}_x}.
\end{align*}
For the first time interval in
\eqref{eq:UL-three-interval-decomposition}, the free dispersive estimate for one derivative, \eqref{eq:H1 uniform bd}, and
\eqref{eq:UL-W12-to-W11} give
\begin{align}
\left\|
\int_0^R
e^{-i(t-r)\Delta}V_L(r)U_L(r,0)f\,dr
\right\|_{W^{1,\infty}_x}
\lesssim_{K_0,K_1,R}
t^{-3/2}\|f\|_{W^{1,2}_x}.
\label{eq:UL-derivative-initial}
\end{align}
For the middle interval, \eqref{eq:UL-potential-W11-map} gives
\begin{multline}
\left\|
\int_R^{t-R}
e^{-i(t-r)\Delta}V_L(r)U_L(r,0)f\,dr
\right\|_{W^{1,\infty}_x}\lesssim_{K_0,K_1}
B_T
\int_R^{t-R}
(t-r)^{-3/2}\langle r\rangle^{-3/2}\,dr  \\ 
\lesssim_{K_0,K_1}
R^{-1/2}t^{-3/2}B_T.
\label{eq:UL deriv disp est 1}
\end{multline}
For the final interval, \eqref{eq:UL-potential-W12-map} and the unitarity of the free flow on $W^{1,2}_x$ give
\begin{align}
\begin{split}
&\left\|
\int_{t-R}^t
e^{-i(t-r)\Delta}V_L(r)U_L(r,0)f\,dr
\right\|_{W^{1,2}_x} \lesssim
\eta_1 B_T
\int_{t-R}^t
\langle r\rangle^{-3/2}\,dr \lesssim
R\eta_1t^{-3/2}B_T.
\end{split}
\label{eq:UL deriv disp est 2}
\end{align}
Choose $R$ sufficiently large, depending on $K_0,K_1$, so that $C_{K_0,K_1}R^{-1/2}\leq\frac{1}{10}$,
and then choose $\delta_0,\delta_1$ sufficiently small so that $CR\eta_1\leq\frac{1}{10}$. 
The free term and
\eqref{eq:UL-derivative-initial}--\eqref{eq:UL deriv disp est 2} then give
\begin{align*}
\sup_{2R\leq|t|\leq T}
|t|^{3/2}
\|U_L(t,0)f\|_{W^{1,2}_x+W^{1,\infty}_x}
\leq
C_{K_0,K_1,R}
\|f\|_{W^{1,1}_x\cap W^{1,2}_x}
+\frac15B_T.
\end{align*}
For $|t|\leq2R$, \eqref{eq:H1 uniform bd} gives
\begin{align*}
\langle t\rangle^{3/2}
\|U_L(t,0)f\|_{W^{1,2}_x+W^{1,\infty}_x}
\lesssim_{K_0,K_1,R}
\|f\|_{W^{1,2}_x}.
\end{align*}
Therefore
\begin{align*}
B_T
\leq
C_{K_0,K_1,R}
\|f\|_{W^{1,1}_x\cap W^{1,2}_x}
+\frac15B_T.
\end{align*}
Absorbing the last term, sending $T\to\infty$, and translating the initial time proves
\eqref{eq: UL local decay deriv}.


\end{proof}


Next we establish an improved dispersive estimate when the propagator $U_L(t,s)$ is sandwiched between two functions supported in disjoint spacetime cones. In preparation for this result we use the following `cancellation lemma', essentially from \cite[Section~2]{JSS-Schrodinger-decay}:

\begin{lemma}[Cancellation Lemma]\label{lem:cancellation} Let $V$ be a multiplication operator with Fourier transform in $L^1$. Then we have an estimate on the $L^p \to L^p$ operator norm for all $p \in [1, \infty]$:
\begin{equation}\label{eq:cancellation}
    \big\| e^{-i t \Delta} V e^{it\Delta} \big\|_{L^p(\R^3) \to L^p(\R^3)} \leq (2\pi)^n \| \hat V \|_{L^1(\R^3)}. 
\end{equation}
\end{lemma}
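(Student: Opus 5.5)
The plan is to conjugate the multiplication operator by the free flow and write the result as a superposition of modulated translations. First write $V$ through its Fourier transform,
\[
V(x) = (2\pi)^{-3}\int e^{ix\cdot\zeta}\,\hat V(\zeta)\,d\zeta ,
\]
so that multiplication by $V$ is an average of the modulation operators $M_\zeta f(x)=e^{ix\cdot\zeta}f(x)$ against the finite measure $(2\pi)^{-3}\hat V(\zeta)\,d\zeta$. By linearity, and Minkowski's integral inequality for the Bochner integral (valid on each $L^p$, including $p=1$ and $p=\infty$, after testing against Schwartz functions),
\[
e^{-it\Delta}Ve^{it\Delta} = (2\pi)^{-3}\int \hat V(\zeta)\, e^{-it\Delta}M_\zeta e^{it\Delta}\,d\zeta .
\]
It therefore suffices to show that each conjugated modulation $e^{-it\Delta}M_\zeta e^{it\Delta}$ is an isometry on every $L^p$.

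The key step is a Galilean computation carried out on the Fourier side. Since $\Delta$ has symbol $|\xi|^2$, $M_\zeta$ shifts frequencies, i.e. $\widehat{M_\zeta g}(\xi)=\hat g(\xi-\zeta)$. Applying $e^{it\Delta}$, then $M_\zeta$, then $e^{-it\Delta}$ multiplies $\hat f(\xi-\zeta)$ by
\[
e^{-it|\xi|^2}\,e^{it|\xi-\zeta|^2}=e^{-2it\xi\cdot\zeta+it|\zeta|^2}.
\]
The factor $e^{-2it\xi\cdot\zeta}$ is a translation by $-2t\zeta$ in $x$, and $e^{it|\zeta|^2}$ is a unimodular constant. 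Hence
\[
e^{-it\Delta}M_\zeta e^{it\Delta}f(x) = e^{it|\zeta|^2}\,e^{ix\cdot\zeta}\,f(x-2t\zeta)
\]
(up to sign conventions, which do not matter for the estimate). The sign should be double-checked against the convention $D_t=-i\partial_t$, but only the modulus-one structure is used.

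The right-hand side is a composition of a translation, a unimodular multiplication and a constant phase, so it is an isometry on $L^p$ for every $p\in[1,\infty]$. Integrating, we get $\|e^{-it\Delta}Ve^{it\Delta}\|_{L^p\to L^p}\le(2\pi)^{-3}\|\hat V\|_{L^1}$, which is stronger than the stated bound with $(2\pi)^n$, $n=3$, since $(2\pi)^{-3}\le(2\pi)^3$. This is consistent with the trivial bound $\|V\|_\infty\le(2\pi)^{-3}\|\hat V\|_1$ at $t=0$.

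The only delicate point is justifying the exchange of the $\zeta$-integral with the unbounded propagators on $L^1$ and $L^\infty$, where $e^{it\Delta}$ is not bounded. I would first establish the identity on Schwartz $f$, where everything converges absolutely and the Fourier computation above is rigorous. Then I would define the operator on $L^p$ by density for $p<\infty$, and by duality (or weak-$*$ density) for $p=\infty$, using the uniform bound just obtained.
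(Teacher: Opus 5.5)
Your proposal is correct and follows the paper's argument: expand $V$ in plane waves, check that each $e^{-it\Delta}e^{ix\cdot\zeta}e^{it\Delta}$ is a translation composed with a modulation and a unimodular constant, and conclude with Minkowski's integral inequality, which gives the sharper constant $(2\pi)^{-3}$. One small slip: the constant phase is $e^{-it|\zeta|^2}$ rather than $e^{it|\zeta|^2}$, because the factor $e^{-2it\xi\cdot\zeta}$ acts on $e^{ix\cdot\zeta}f$ and contributes an extra $e^{-2it|\zeta|^2}$, but as you note this does not affect the bound.
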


\begin{proof} 
This follows by writing $V$ as a superposition of plane waves, via the Fourier transform. Then for each plane wave component $e^{i x \cdot k}$, by direct calculation, we find that  
$e^{-i t \Delta} e^{ix \cdot k} e^{it\Delta}$ is a product of a translation operator, modulation operator and multiplication by a complex unit, so has norm $1$ on every $L^p$ space. The general result follows by applying Minkowski's integral inequality. 
\end{proof}

\begin{lemma}
\label{lemma: UL-non-stationary} Assume that $V_L$ satisfies both conditions of Lemma~\ref{lem:UL properties}. Let $\chi_1$ and $\chi_2$ be conic cutoffs, i.e. smooth functions homogeneous of degree zero in $(x, t)$, with disjoint supports. Moreover, we assume that either (i) the support of $\chi_1$ is a cone with compact cross-section, or (ii) $1 - \chi_1$ has support in a cone with compact cross-section. 
Then we have an estimate for $t \geq s \geq 1$:
\begin{equation}\label{eq: UL-non-stationary}
    \big\| \chi_1(\cdot/t) U_L(t,s) \chi_2(\cdot/s) \big\|_{W^{1,1}\cap H^1\to L^2+L^\infty}\lesssim \frac{1}{\langle t-s\rangle^{1/2}} \times \frac{1}{t}. 
\end{equation}
    
\end{lemma}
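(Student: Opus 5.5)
The plan is to reduce \eqref{eq: UL-non-stationary} to the free propagator by Duhamel, get the gain $(t-s)/t$ in the free case from one integration by parts in the kernel, and treat the potential terms with Lemma~\ref{lem:UL properties}. Throughout, $f$ ranges over $W^{1,1}\cap H^1$.

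\emph{Free propagator.} The kernel of $e^{-i(t-s)\Delta}$ is $c\,(t-s)^{-3/2}e^{i|x-y|^2/4(t-s)}$. On the support of $\chi_1(x/t)\chi_2(y/s)$, write $x=tv$ and $y=sw$ with $v\in\supp\chi_1$, $w\in\supp\chi_2$. The first step is a geometric lower bound
\[
|tv-sw|\geq c\,t \qquad (t\geq s\geq 1),
\]
obtained from the disjointness of the supports together with hypothesis (i) or (ii). The point of (i)/(ii) is that one of the two supports stays a positive distance from $v=0$, or from $|v|=\infty$, relative to the other; this rules out $tv\approx sw$ when $t\gg s$. I expect to split into cases here: $t\leq 2s$ and $t>2s$, and $\supp\chi_1$ bounded versus $1-\chi_1$ compactly supported. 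If the bound fails in a degenerate region, for instance $0\in\supp\chi_2$ with $v$ small, I would decompose $\chi_2$ further and use a directional version $\ell\cdot(x-y)\geq ct$ for a fixed unit vector $\ell$ on each piece.

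Given this bound, I integrate by parts once in $y$ using
\[
e^{i|x-y|^2/4(t-s)}=\frac{2(t-s)}{i}\,\frac{(y-x)\cdot\nabla_y}{|x-y|^2}\,e^{i|x-y|^2/4(t-s)}.
\]
The derivative lands either on $\chi_2(y/s)f(y)$, which costs $\|f\|_{W^{1,1}}$ (a derivative of $\chi_2(\cdot/s)$ only gives an extra $s^{-1}\leq 1$), or on the amplitude, which gives $O(|x-y|^{-2})$. This yields the pointwise bound $(t-s)^{-3/2}\cdot(t-s)/t=(t-s)^{-1/2}t^{-1}$ into $L^\infty$ for $t-s\geq 1$. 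For $t-s\leq 1$ I use the operator identity $x\,e^{-i(t-s)\Delta}-e^{-i(t-s)\Delta}x=2(t-s)D\,e^{-i(t-s)\Delta}$, localized with a smooth separating function adapted to the cones. This bounds the $L^2$ norm by $\frac{t-s}{t}\|f\|_{H^1}\lesssim t^{-1}$, which is the claimed bound since $\langle t-s\rangle\sim 1$ there.

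\emph{Potential terms.} Write
\[
U_L(t,s)=e^{-i(t-s)\Delta}-i\int_s^t e^{-i(t-r)\Delta}V_L(r)U_L(r,s)\,dr,
\]
and insert a partition $1=\tilde\chi_1(x/r)+\tilde\chi_2(x/r)$ at the intermediate time $r$. Here $\tilde\chi_1$ is supported in a cone disjoint from $\supp\chi_2$ and $\tilde\chi_2$ in a cone disjoint from $\supp\chi_1$, with the same compactness type so that the geometric lower bound still applies.
\begin{itemize}
\item In the $\tilde\chi_2$ term, $\chi_1(x/t)\,e^{-i(t-r)\Delta}\,\tilde\chi_2(x/r)$ is a free sandwich with disjoint cones, so the free estimate applies. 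The input $V_LU_L(r,s)\chi_2 f$ is bounded in $W^{1,1}\cap H^1$ by $\langle r-s\rangle^{-3/2}$, using Lemma~\ref{lem:UL properties}(iv),(v) and \eqref{eq: UL local decay deriv}.
\item In the $\tilde\chi_1$ term, the factor $\tilde\chi_1(x/r)U_L(r,s)\chi_2(x/s)$ is again a disjoint sandwich. I control it by a bootstrap quantity $M=\sup_{t\geq s\geq 1}\langle t-s\rangle^{1/2}\,t\,\|\chi_1U_L\chi_2\|$, taken over the finite family of cutoff pairs used. Then $V_L$ maps $L^2+L^\infty$ to $L^1$ boundedly and to $L^2$ with small norm, and the free propagator from $r$ to $t$ is controlled by the ordinary dispersive estimate.
\end{itemize}
The time integrals have the form $\int_s^t\langle t-r\rangle^{-3/2}\langle r-s\rangle^{-1/2}r^{-1}\,dr$ for the bootstrap term and $\int_s^t\langle t-r\rangle^{-1/2}t^{-1}\langle r-s\rangle^{-3/2}\,dr$ for the free term. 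Both are $\lesssim\langle t-s\rangle^{-1/2}t^{-1}$, so smallness of $\delta_0,\delta_1$ lets me absorb $M$ as in the proof of Lemma~\ref{lem:UL properties}.

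\emph{Main obstacle.} I expect the main difficulty to be the geometric bound $|tv-sw|\gtrsim t$, uniformly in $t\geq s$, together with a choice of intermediate cutoffs $\tilde\chi_j$ that keeps every sandwich in the admissible class (i)/(ii). The regime $t\gg s$ with small velocities is the delicate one. If the direct bound fails there, I would first try the $\ell$-directional refinement, so that only one component of $x-y$ needs to be bounded below, and integrate by parts only in that direction.
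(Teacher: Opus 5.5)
Your free-propagator estimate is essentially sound, and it is a legitimate alternative to the paper's treatment of $T_{FF}$. The bootstrap for the potential terms, however, does not close.

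\textbf{The gap.} In the $\tilde\chi_1$ term you need a bound on $\tilde\chi_1(\cdot/r)U_L(r,s)\chi_2(\cdot/s)$. Your construction forces two conditions on $\tilde\chi_1$:
\begin{itemize}
\item $\tilde\chi_1\equiv 1$ on $\supp\chi_1$, so that $\tilde\chi_2=1-\tilde\chi_1$ misses $\supp\chi_1$;
\item $\tilde\chi_1=0$ on $\supp\chi_2$.
\end{itemize}
So $(\tilde\chi_1,\chi_2)$ is a new pair, with larger first support and a smaller gap to $\supp\chi_2$. Running the same scheme on it produces $(\tilde{\tilde\chi}_1,\chi_2)$ with $\tilde{\tilde\chi}_1\equiv1$ on $\supp\tilde\chi_1$, and so on. No finite family of pairs is closed under this operation. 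The inequality you actually obtain is $M(\chi_1,\chi_2)\le C+\eta\,M(\tilde\chi_1,\chi_2)$, which cannot be absorbed.

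You also cannot take the supremum over all admissible pairs. The constants in your free estimate degenerate as the separation shrinks: both the $c$ in $|x-y|\ge ct$ and the range of $s/t$ on which that bound holds. Falling back on \eqref{eq: UL local decay} for $\tilde\chi_1U_L\chi_2$ loses the factor $t^{-1}$: it gives only $\langle t-s\rangle^{-3/2}$, which is $O(1)$ when $t-s\sim1$.

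\textbf{How the paper avoids this.} The paper uses no bootstrap. For $s\ge(1-\delta)t$ it uses two facts: $\chi_1(\cdot,t)\chi_2(\cdot,s)=0$, and $V_L$ commutes with multiplication by $\chi_1(\cdot,t)$ frozen at time $t$. Together these give
\[
\chi_1(\cdot,t)U_L(t,s)\chi_2(\cdot,s)=\int_s^tU_L(t,s')\,i[\Delta,\chi_1(\cdot,t)]\,U_L(s',s)\chi_2(\cdot,s)\,ds'.
\]
\begin{itemize}
\item The gain $t^{-1}$ is now explicit in $\nabla\chi_1(\cdot,t)$, and the $s'$-integral supplies the factor $t-s$.
\item After expanding both propagators by Duhamel, the free flows on either side of $\partial_j\chi_1(\cdot,t)$ are merged using Lemma~\ref{lem:cancellation}. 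The conjugated multiplier has norm $O(t^{-1})$ on every $L^p$, so the full dispersive decay in $t-r$ survives.
\item The remaining factors need only the already-proved bounds \eqref{eq: UL local decay} and \eqref{eq: UL local decay deriv}.
\end{itemize}

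\textbf{Your stated main obstacle is not one.} For $s\le(1-\delta)t$ one has $\langle t-s\rangle^{-3/2}\lesssim_\delta\langle t-s\rangle^{-1/2}t^{-1}$, so \eqref{eq: UL local decay} alone suffices there. The lower bound $|tv-sw|\gtrsim t$ is only needed for $s\ge(1-\delta)t$, where it is true:
\begin{itemize}
\item in case (i), write $tv-sw=s(v-w)+(t-s)v$;
\item in case (ii), where $\chi_2$ has compact cross-section, write $tv-sw=t(v-w)+(t-s)w$.
\end{itemize}
For $t\gg s$ the bound genuinely fails. Take $0\in\supp\chi_1$, $v=0$ and $s=1$; then $|x-y|=|w|=O(1)$. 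Since $|x-y|$ itself stays bounded, your proposed directional refinement $\ell\cdot(x-y)\ge ct$ cannot repair this either.
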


\begin{proof}
We first prove in case (i), where the support of $\chi_1$ is a cone with compact cross-section. 
For this, we compute $\chi_1 U_L(t,s) \chi_2$ by moving $\chi_1$ through $U_L(t,s)$. 
Fix positive $\delta \ll 1$ such that when $(1-\delta) t \leq s \leq t$, we have
\begin{equation}\label{eq:chii disjoint support}
    \supp \chi_1(\cdot/t) \cap \supp \chi_2(\cdot/s) = \emptyset.
\end{equation}
This is possible since the supports lie in two disjoint closed cones, at least one of which has compact cross-section. So the two cross-sections are a positive distance apart (in terms of the parameter $x/2t$), which implies \eqref{eq:chii disjoint support}. 

For $s \in [0,(1-\delta)t]$, we have $|t-s| \sim t$ and we apply \eqref{eq: UL local decay} to obtain the desired estimate.
Now consider $s \in [(1-\delta)t,t]$. Using the disjoint support property of $\chi_1$ and $\chi_2$,
we have 
\begin{equation}\label{eq: FM2 move}
   \begin{aligned}
 \chi_1(\cdot,t) U_L(t,s) \chi_2(\cdot, s) =& \int_s^t \frac{d}{ds'} \Big( U_L(t, s') \chi_1(\cdot, t) U_L(s',s) \chi_2(\cdot, s) \Big) \, ds' \\
       &=\int_s^t U_{L}(t,s') i[\Delta + V_L, \chi_1(\cdot, t)] U_L(s',s) \chi_2(\cdot, s) ds'\\
       =& \int_s^t U_{L}(t,s') i[\Delta, \chi_1(\cdot, t)] U_L(s',s) \chi_2(\cdot, s) ds'. 
   \end{aligned} 
\end{equation}

Using 
\begin{equation}
\begin{aligned}
    [\Delta, \chi_1(x, t)]=&-2\sum\limits_j[\partial_{x_j}\chi_1(x, t)]\partial_{x_j}+\Delta\chi_1(x, t)\\
    = & -2\sum\limits_j[\partial_{x_j}\chi_1(x, t)]\partial_{x_j} +O(t^{-2})
    \end{aligned}
\end{equation}
in~\eqref{eq: FM2 move} yields 
\begin{equation}\label{eq: Fmove U1}
\begin{aligned}
    & \chi_1(x, t) U_L(t,s) \chi_2(y,s)\\
    = & -2i\sum\limits_{j}\int_s^t U_L(t,s')\partial_{x_j}[\chi_1(x, t)]\partial_{x_j}U_L(s',s)\chi_2(y,s) ds'+O(t^{-2}). 
\end{aligned}
\end{equation}
The zeroth order, $O(t^{-2})$ term can be treated in the same way as the leading order term below; we only treat the leading part in detail.

We now express $U_L$ in terms of the free propagator.  By Duhamel's formula we have
\begin{align}
U_L(s',s)
&=e^{-i(s'-s)\Delta}
-i\int_s^{s'} e^{-i(s'-r)\Delta}V_L(r)U_L(r,s)dr,
\label{eq:UL-expansion-1}\\
U_L(t,s') &= e^{-i(t-s') \Delta} - i \int_{s'}^t U_L(t, r') V_L(r') e^{-i(r'-s')\Delta} \, dr'  \label{eq:UL-expansion-2}
\end{align}

After inserting \eqref{eq:UL-expansion-1} and \eqref{eq:UL-expansion-2} into \eqref{eq: Fmove U1}, the leading term is 
\begin{align}
\begin{split}
&\chi_1(x,t)U_L(t,s)\chi_2(y,s) \\
={}& -2i\sum_j \int_s^t e^{-i(t-s')\Delta}[\partial_{x_j}\chi_1(x,t)]\partial_{x_j}e^{-i(s'-s)\Delta}\chi_2(y,s)\,ds' 
\\ &-2i\sum_j \int_s^t e^{-i(t-s')\Delta}[\partial_{x_j}\chi_1(x,t)]\partial_{x_j}\Big(-i\int_s^{s'} e^{-i(s'-r)\Delta}V_L(r)U_L(r,s)\,dr\Big)\chi_2(y,s)\,ds' \\
&-2i\sum_j \int_s^t \Big(-i\int_{s'}^t U_L(t,r')V_L(r')e^{-i(r'-s')\Delta}\,dr'\Big)[\partial_{x_j}\chi_1(x,t)]\partial_{x_j}e^{-i(s'-s)\Delta}\chi_2(y,s)\,ds' \\
&+2i\sum_j \int_s^t \Big(\int_{s'}^t U_L(t,r')V_L(r')e^{-i(r'-s')\Delta}\,dr'\Big)[\partial_{x_j}\chi_1(x,t)] \times  \\
 &\phantom{aaaaaaaaaaaaaaaaaaaa}\times \partial_{x_j}\Big(\int_s^{s'} e^{-i(s'-r)\Delta}V_L(r)U_L(r,s)\,dr\Big) \chi_2(y,s)\,ds'
\\& :=T_{FF}+T_{FD}+T_{DF}+T_{DD}. 
\end{split}
\label{eq:expanded}
\end{align}

\textbf{The term $T_{FF}$.} 
For the free term $T_{FF}$, use that $\partial_{x_j}$ commutes with the free propagator:
\begin{align}
&-2i\int_s^t e^{-i(t-u)\Delta}
[\partial_{x_j} \chi_1(x, t)]
\partial_{x_j}e^{-i(u-s)\Delta}
\chi_2(y,s)fdu \nonumber\\
&=
-2i\int_s^t
\Big(e^{-i(t-u)\Delta}
[\partial_{x_j}\chi_1(x, t)]
e^{i(t-u)\Delta} \Big)
e^{-i(t-s)\Delta}
\partial_{x_j}(\chi_2(y,s)f)du .
\label{eq:free-conjugated}
\end{align}

If $t-s\ge 1$, then \eqref{eq:free-conjugated}, Lemma~\ref{lem:cancellation} (noting that $\| \widehat{\partial_{x_j} \chi_1}(\xi,t)\|_{L^1} < \infty$ using the compact support of $\chi_1(\cdot, t)$, and is $O(t^{-1})$ by scaling),
and the free dispersive estimate give 
\begin{align}
&\Big\| \int_s^t
(e^{-i(t-u)\Delta}
\partial_{x_j}(\chi_1(x, t))
e^{i(t-u)\Delta})
e^{-i(t-s)\Delta}
\partial_{y_j}(\chi_2(y, s) f(y))du
\Big\|_{L^\infty_y}
\nonumber\\
&\lesssim
\int_s^t
\frac{1}{t}
(t-s)^{-3/2}
\|\partial_{y_j}(\chi_2(y, s) f(y))\|_{L^1_y}du
\nonumber\\
&\lesssim
\frac{1}{t(t-s)^{1/2}}
\|f\|_{W^{1,1}_x}.
\label{eq:free-first-order-large-time}
\end{align}
If $0<t-s\le 1$, then unitarity on $L^2_x$ gives
\begin{align}
&\Big\| \int_s^t
(e^{-i(t-u)\Delta}
\partial_{x_j}(\chi_1(x, t))
e^{i(t-u)\Delta})
e^{-i(t-s)\Delta}
\partial_{y_j}(\chi_2(y, s)f(y))du
\Big\|_{L^2_x}
\nonumber\\
&\lesssim
\int_s^t
\frac{1}{t}
\|\partial_{y_j}(\chi_2(y,s) f(y))\|_{L^2_y}du
\nonumber\\
&\lesssim
\frac{1}{t}
\|f\|_{H^1_x}.
\label{eq:free-first-order-short-time}
\end{align}
Therefore the first-order part satisfies
\begin{align}
&\| \sum_j\int_s^t e^{-i(t-u)\Delta}
\partial_{x_j}(\chi_1(x, t))
\partial_{x_j}e^{-i(u-s)\Delta}
\chi_2(\cdot, s)du
\|_{W^{1,1}_x\cap H^1_x\to L^2_x+L^\infty_x}
\nonumber\\
&\lesssim
\frac{1}{t\langle t-s\rangle^{1/2}} .
\label{eq:free-first-order-final}
\end{align}

The zeroth-order part of the commutator is treated in the same way and satisfies the same estimate (and is even better, as there is an additional decaying factor of $t^{-1}$); we omit the details. In summary, the $T_{FF}$ term satisfies the required estimate.

\textbf{The term $T_{FD}$.} 
Rewriting $T_{FD}$ as
\begin{align}
\begin{split}
T_{FD} &=-2\sum_j\int_s^t\!\!\int_s^{s'}\big(e^{-i(t-s')\Delta}[\partial_{x_j}\chi_1(x,t)]e^{i(t-s')\Delta}\big)
\\ &\qquad\times e^{-i(t-r)\Delta}\partial_{x_j}\big(V_L(r)U_L(r,s)\chi_2(y,s)f\big)\,dr\,ds'.
\end{split}
\end{align}
By \eqref{eq: UL local decay deriv}, $U_L(r,s)\chi_2(y,s)f\in W^{1,2}_x+W^{1,\infty}_x$ with norm $\lesssim\langle r-s\rangle^{-3/2}\|f\|_{W^{1,1}_x\cap H^1_x}$. By Lemma~\ref{lem:UL properties} (v), applying $V_L$ then $\partial_{x_j}$ lands in $L^1_x\cap L^2_x$ with norm $\lesssim\langle r-s\rangle^{-3/2}\|f\|_{W^{1,1}_x\cap H^1_x}$. 
Using Lemma~\ref{lem:cancellation} and the free dispersive estimate, the integrand is bounded in $L^2_x+L^\infty_x$ by $\tfrac{1}{t}\langle t-r\rangle^{-3/2}\langle r-s\rangle^{-3/2}\|f\|_{W^{1,1}_x\cap H^1_x}$, uniformly in $s'$. 
Integrating $s'\in[r,t]$ gives a factor $t-r$, and we have
\begin{align}
\| T_{FD}\|_{W^{1,1}_x\cap H^1_x\to L^2_x+L^\infty_x}
\lesssim \frac{1}{t}\int_s^t(t-r)\langle t-r\rangle^{-3/2}\langle r-s\rangle^{-3/2}\,dr
\lesssim \frac{1}{t\,\langle t-s\rangle^{1/2}}.
\end{align}

\textbf{The term $T_{DF}$.}
Rewriting $T_{DF}$ as
\begin{align}
\begin{split}
T_{DF}
&=-2\sum_j\int_s^t\!\!\int_{s'}^t U_L(t,r')V_L(r')\big(e^{-i(r'-s')\Delta}\partial_{x_j}(\chi_1(x, t))e^{i(r'-s')\Delta}\big)
\\ &\qquad\times e^{-i(r'-s)\Delta}\partial_{x_j}\big(\chi_2(y,s)f\big)\,dr'\,ds'.
\end{split}
\end{align}
Now $\partial_{x_j}(\chi_2(y,s)f)\in L^1_x\cap L^2_x$ with norm $\lesssim\|f\|_{W^{1,1}_x\cap H^1_x}$ directly. 
Using Lemma~\ref{lem:cancellation} and the free dispersive estimate places the resulting function in $L^2_x+L^\infty_x$ with norm $\lesssim\tfrac1t\langle r'-s\rangle^{-3/2}\|f\|_{W^{1,1}_x\cap H^1_x}$, after which multiplication by $V_L$ maps $L^2_x+L^\infty_x$ to $L^1_x\cap L^2_x$ and $U_L(t,r')$ via \eqref{eq: UL local decay} contributes an operator norm factor of $\langle t-r'\rangle^{-3/2}$. The integrand bound $\tfrac{1}{t}\langle t-r'\rangle^{-3/2}\langle r'-s\rangle^{-3/2}$ is independent of $s'$; integrating $s'\in[s,r']$ gives a factor $r'-s$ and we have
\begin{align}
\| T_{DF}\|_{W^{1,1}_x\cap H^1_x\to L^2_x+L^\infty_x}
\lesssim \frac{1}{t}\int_s^t(r'-s)\langle t-r'\rangle^{-3/2}\langle r'-s\rangle^{-3/2}\,dr'
\lesssim \frac{1}{t\,\langle t-s\rangle^{1/2}}.
\end{align}

\textbf{The term $T_{DD}$.} Conjugating $\partial_{x_j}(\chi_1(x, t))$ by $e^{-i(r'-s')\Delta}$ and combining the two free propagators,
\begin{align}
\begin{split}
T_{DD}
&= 2i\sum_j\int_s^t\int_{s'}^t \int_s^{s'} U_L(t,r')V_L(r')\big(e^{-i(r'-s')\Delta}[\partial_{x_j}\chi_1(x, t)]e^{i(r'-s')\Delta}\big)
\\ &\qquad\times e^{-i(r'-r)\Delta}\partial_{x_j}\big(V_L(r)U_L(r,s)\chi_2(y,s)f\big)\,dr\,dr'\,ds'.
\end{split}
\end{align}
By the discussion for $T_{FD}$, we have $\partial_{x_j}(V_L(r)U_L(r,s)\chi_2(y,s)f)\in L^1_x\cap L^2_x$ with norm $\lesssim\langle r-s\rangle^{-3/2}\|f\|_{W^{1,1}_x\cap H^1_x}$ by \eqref{eq: UL local decay deriv}. For the middle factor, we apply Lemma~\ref{lem:cancellation} together with the free dispersive estimate, which shows that it is bounded by $\frac{1}{t}\langle r'-r\rangle^{-3/2}$, and with the outer block of $T_{DF}$, namely multiplication by $V_L$ followed by $U_L(t,r')$ via \eqref{eq: UL local decay} contributing $\langle t-r'\rangle^{-3/2}$, the integrand is bounded in $L^2_x+L^\infty_x$ by
\begin{align}
\frac{1}{t}\,\langle t-r'\rangle^{-3/2}\langle r'-r\rangle^{-3/2}\langle r-s\rangle^{-3/2}\|f\|_{W^{1,1}_x\cap H^1_x}
\end{align}
uniformly in $s'$. Integrating $s'\in[r,r']$ gives a factor $r'-r$, and we have
\begin{align}
\begin{split}
\| T_{DD}\|_{W^{1,1}_x\cap H^1_x\to L^2_x+L^\infty_x}
&\lesssim \frac{1}{t}\int_s^t\langle r-s\rangle^{-3/2}\Big(\int_r^t(r'-r)\langle t-r'\rangle^{-3/2}\langle r'-r\rangle^{-3/2}\,dr'\Big)dr
\\ &\lesssim \frac{1}{t}\int_s^t\langle r-s\rangle^{-3/2}\langle t-r\rangle^{-1/2}\,dr
\lesssim \frac{1}{t\,\langle t-s\rangle^{1/2}}.
\end{split}
\end{align}

This proves the required estimate and completes the proof under case (i). The proof in case (ii) only requires two minor adjustments. First, condition \eqref{eq:chii disjoint support}  still holds for some $\delta > 0$, as when $\chi_1(x/t) = 1$ for all large $|x/t|$, then necessarily $\chi_2$ is supported in a cone with compact cross-section, so the two cross-sections are still a positive distance apart. 
Second, we note that Lemma~\ref{lem:cancellation} still applies in case (ii), since $\partial_{x_j}\chi_1(\cdot,t)$ and $\Delta\chi_1(\cdot,t)$ have compact spatial support, and hence their Fourier transforms are in $L^1$, with the same bounds as in case (i). The rest of the proof proceeds unchanged.
\end{proof}


\section{Strichartz estimate in a cone} \label{sec:Strichartz}

We prove the endpoint Strichartz estimate localized to a spacetime cone in this section.

\begin{proposition}\label{prop:u exterior decay}
Assume that the nonlinear potential $\mc{N}$ satisfies conditions \eqref{eq:Ncondition 1} --- \eqref{eq:Ncondition 3}. Assume that $\| u \|_{L^\infty(C^\circ)}$ is sufficiently small on some open spacetime cone $C^\circ$. Let $\chi_1$ be a conic cutoff supported in $C^\circ$, such that either (i) $\chi_1$ has compact cross-sectional support (see Section~\ref{sec:spacetime cones} for the definition) or (ii) $1-\chi_1$ has compact cross-sectional support. 
Let $u$ be a solution to \eqref{eq:nls} with $H^2(\R^3)$ initial condition. Then it satisfies the (long-time) Strichartz estimate
\begin{equation}\label{eq:input-L2-2}
 \norm[L^2_t L^{6/5}_x]{\chi_1(\cdot, t) \mathcal{N}(|u(t)|)\,u(t)\; } 
 \;\lesssim\;1. 
\end{equation}

\end{proposition}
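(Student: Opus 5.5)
The plan is to reduce \eqref{eq:input-L2-2} to a localized endpoint Strichartz bound for $u$ itself, and to obtain that bound by splitting the Duhamel formula with respect to the perturbed propagator $U_L$ of Section~\ref{sec:perturbed-evolution}.

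\textbf{Cutoffs and potential.} Choose conic cutoffs $\chi_1 \prec \tilde\chi \prec \chi'$, each supported in $C^\circ$, where $a\prec b$ means $b=1$ on $\supp a$. Each is taken of the same type (i) or (ii) as $\chi_1$. Set $V_L := \chi' \mc{N}(|u|)$ for $t\ge T$. By Remark~\ref{rem:nonlinear potential bounds cone}, Proposition~\ref{prop:Linfty} and Proposition~\ref{prop:gradient-L2to6-bound}:
\begin{itemize}
\item $\|V_L\|_{L^\infty_tL^1_x}$ and $\|\nabla V_L\|_{L^\infty_t L^1_x}$ are bounded by $C(\mathscr M,\mathscr E)$;
\item $\|V_L\|_{L^\infty_{t,x}}$ and $\|\nabla V_L\|_{L^\infty_tL^3_x}$ are controlled by a positive power of $\eta=\|u\|_{L^\infty(C^\circ)}$.
\end{itemize}
The $\nabla\chi'=O(t^{-1})$ terms are harmless. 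Hence, for $\eta$ small, all hypotheses of Lemma~\ref{lem:UL properties} and Lemma~\ref{lemma: UL-non-stationary} hold.

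\textbf{Reduction to a bound for $\tilde\chi u$.} By H\"older,
\[
\|\chi_1\mc N(|u|)u\|_{L^{6/5}_x}\le \|\chi_1 \mc N(|u|)\|_{L^{3/2}_x}\,\|\tilde\chi u\|_{L^6_x}\lesssim \|\tilde\chi u(t)\|_{L^6_x},
\]
and on $[0,T]$ everything is trivially bounded. So it suffices to show $\tilde\chi u\in L^2_t([T,\infty);L^6_x)$.

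\textbf{Duhamel splitting.} Write the equation as $(D_t+\Delta+V_L)u = -(1-\chi')\mc N(|u|)u =: F$, and use
\[
u(t)=U_L(t,T)u(T)-i\int_T^t U_L(t,s)F(s)\,ds .
\]
The first term lies in $L^2_tL^6_x$ by Lemma~\ref{lem:UL properties}(ii), with norm bounded by $\mathscr M^{1/2}$.

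For the Duhamel term, I would use that $\tilde\chi(\cdot,t)$ and $(1-\chi')(\cdot,s)$ have disjoint conic supports. By Remark~\ref{rem:ext}, $\|F(s)\|_{W^{1,1}\cap H^1}$ is bounded uniformly in $s$. Lemma~\ref{lemma: UL-non-stationary} then bounds each piece $\tilde\chi U_L(t,s)F(s)$ in $L^2+L^\infty$ by $t^{-1}\langle t-s\rangle^{-1/2}$. Since $\|V_L w\|_{L^{6/5}}\lesssim \|w\|_{L^2+L^\infty}$ by the $L^3\cap L^{6/5}$ bounds on $V_L$, it is natural to run the whole argument on the quantity $\|\chi_1\mc N u\|_{L^{6/5}}$ directly rather than on $\|\tilde\chi u\|_{L^6}$.

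\textbf{Main obstacle.} The crude integral $\int_T^t t^{-1}\langle t-s\rangle^{-1/2}\,ds\sim t^{-1/2}$ is just barely not square-integrable in $t$, so the nonresonant Duhamel term is the hard step. I would try first to treat the time-integral operator with kernel $K(t,s)=t^{-1}\langle t-s\rangle^{-1/2}$ by a $TT^*$/Schur-type argument instead of summing norms. Concretely, I would pair against $g\in L^2_tL^6_x$ and exploit the oscillation in $s$ of $U_L(t,s)F(s)$ through the dual endpoint estimate. Failing that, I would split $s\le(1-\delta)t$, where the full $\langle t-s\rangle^{-3/2}$ dispersive decay combined with the $L^1$ bound on $F$ is used, from $s\in[(1-\delta)t,t]$. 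On the latter region I would seek an extra gain from the commutator structure of the proof of Lemma~\ref{lemma: UL-non-stationary}, integrating by parts once more in $s'$. The remaining potential-interaction contributions are then closed by a bootstrap on $\|\tilde\chi u\|_{L^2_t([T,T_1])L^6_x}$, using the smallness of $\eta$, and finally letting $T_1\to\infty$.
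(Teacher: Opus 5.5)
There is a genuine gap: the argument does not close, and the remedies you list would not close it.

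You correctly reach $\|\tilde\chi u(t)\|_{L^2+L^\infty}\lesssim t^{-1/2}$ for the Duhamel part. You get this from \eqref{eq: UL local decay} for $s\le t/2$ and Lemma~\ref{lemma: UL-non-stationary} for $s\in[t/2,t]$, and it is exactly the decay the paper obtains. But you then use this decay only \emph{once}. Your H\"older step $\|\chi_1\mc N(|u|)u\|_{L^{6/5}}\le\|\chi_1\mc N(|u|)\|_{L^{3/2}}\|\tilde\chi u\|_{L^6}$, and likewise your variant $\|V_Lw\|_{L^{6/5}}\lesssim\|w\|_{L^2+L^\infty}$, treat the potential factor $\chi_1\mc N(|u|)$ as merely bounded. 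So the best you can get is $t^{-1/2}\notin L^2_t$.

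Neither a Schur/$TT^*$ argument nor a bootstrap rescues this. The source $(1-\chi')\mc N(|u(s)|)u(s)$ is only known to be bounded uniformly in $s$, not square-integrable in $s$. Lemma~\ref{lemma: UL-non-stationary} is an operator-norm bound with no oscillation in $s$ left to exploit. Hence $\int_{t/2}^t t^{-1}\langle t-s\rangle^{-1/2}\,ds\sim t^{-1/2}$ is sharp for what you have, and the extra integration by parts in $s'$ is left unspecified. Your target $\tilde\chi u\in L^2_tL^6_x$ is a localized linear endpoint estimate. It is stronger than \eqref{eq:input-L2-2} and is not what the paper proves.

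The missing idea is to use that the nonlinearity vanishes to third order. By \eqref{eq:Ncondition 1} and Proposition~\ref{prop:Linfty}, $|\mc N(|u|)|\lesssim|u|^2$. With nested cutoffs this gives $\|\chi_1\mc N(|u|)u\|_{L^{6/5}}\lesssim\|\tilde\chi u\|_{L^{18/5}}^3$, so the decay enters three times rather than once.

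Granting $\|\tilde\chi u(t)\|_{L^2+L^\infty}\lesssim t^{-1/2}$, split $\tilde\chi u=g+h$ with $\|g\|_{L^2},\|h\|_{L^\infty}\lesssim t^{-1/2}$. A level-set truncation of $\tilde\chi u$ makes both pieces uniformly bounded in $L^2\cap L^\infty$. Interpolation then gives $\|g\|_{L^{18/5}}^3\lesssim\|g\|_{L^2}^{5/3}\|g\|_{L^\infty}^{4/3}\lesssim t^{-5/6}$ and $\|h\|_{L^{18/5}}^3\lesssim\|h\|_{L^2}^{5/3}\|h\|_{L^\infty}^{4/3}\lesssim t^{-2/3}$. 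Both are square-integrable on $[1,\infty)$. This is how the paper finishes, and no further oscillation or bootstrap argument is needed.

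One point in your favour: you keep the term $U_L(t,T)u(T)$ in $L^2_tL^6_x$ rather than inside the decaying $L^2+L^\infty$ bound. That is more careful than the paper's rather terse treatment, which folds this term into the $t^{-1/2}$ claim; as written, that needs $u(0)\in L^1$. The cross terms your split produces can be handled by H\"older and interpolation, using the $L^2_tL^6_x$ bound on that piece together with the uniform $H^1$ and $L^\infty$ bounds on $u$.
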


\begin{proof}
We first cut off the nonlinear potential to a spacetime cone so that it has small norm in $L^\infty_t L^{3/2}_x$, and so that its gradient is small in $L^\infty_t L^3_x$, using \eqref{eq:nonlinear pot cones} and \eqref{eq:gradient nonlinear pot cones}. This will put us in a position to apply Lemma~\ref{lem:UL properties}. We do this by choosing a conic cutoff $\chi_1'$ equal to $1$ on a neighbourhood of the support of $\chi_1$, with support contained in $C^\circ$. Then we define $\chi_2 = 1 - \chi_1'$, which therefore has disjoint support from $\chi_1$. Our potential $V_L$ is defined by $V_L := \mc{N}(|u|) \chi_1'$. By construction, since $|u|$ is pointwise small on the support of $\chi_1'$, $V_L$ is small in $L^\infty_t L^{3/2}_x$. Similarly, $\nabla V_L$ will be small in $L^\infty_t L^3_x$, as it also contains at least one factor of $|u|$ (recall that we can always pull out a factor $|u|^2$ from $\mc{N}$, thanks to \eqref{eq:Ncondition 1}). We define $U_L(t, s)$ to be the propagator for the time-dependent Schr\"odinger operator $\Delta + V_L$. Then all the results of Lemmas~\ref{lem:UL properties} and \ref{lemma: UL-non-stationary} are available. 

We express $u(t)$ in terms of the $U_L$ evolution and a Duhamel term:
\begin{equation}
u(t) = U_L(t, 0) u(0) - i \int_0^t U_L(t, s) \chi_2(\cdot, s) \mc{N}(|u(s)|) u(s) \, ds. 
\end{equation}
Thus 
\begin{equation}\label{eq: u exterior expression}
\chi_1(\cdot, t) u(t) = \chi_1(\cdot, t) U_L(t, 0) u(0) - i \chi_1(\cdot, t) \int_0^t U_L(t, s) \chi_2(\cdot, s) \mc{N}(|u(s)|) u(s) \, ds. 
\end{equation}

    The first term satisfies the required estimate, thanks to \eqref{eq:UL endpt Strichartz} and the uniform-in-time estimates \eqref{eq:nonlinear Lp bound}. Now we consider the integral term. 
    We decompose the integral into the part with $s \leq \frac{t}{2}$ and $s \in [\frac{t}{2},t]$.
For the first part, we use the dispersive estimate \eqref{eq: UL local decay} and $|t-s| \gtrsim t$ to conclude that for fixed $s$, the $L^2 + L^\infty$-norm is bounded by $t^{-1/2}$.


For the part with $s \in [\frac{t}{2},t]$, we use \eqref{eq:nonlinear Lp bound} for $p=1$ and $p=2$ (see Remark~\ref{rem:ext} for the $p=2$ case), apply  Lemma~\ref{lemma: UL-non-stationary}  and integrate in $s$ to see that this part is bounded by 
\begin{equation}
  \| \chi_1(\cdot, t) \int_{\frac{t}{2}}^t U_L(t, s) \chi_2(\cdot, s) \mc{N}(|u(s)|) u(s) \, ds  \|_{L^2+L^\infty} \lesssim t^{-1/2}.
\end{equation}

In sum, we have proved $ \| \chi_1(\cdot, t) u(t) \|_{L^2+L^\infty} \lesssim t^{-1/2}$. Now to treat $\chi_1(\cdot, t) \mc{N}(|u(t)|) u(t)$, we factor out $\chi_1 |u|^2 u$, and the remaining factor is bounded thanks to Proposition~\ref{prop:Linfty} and \eqref{eq:Ncondition 1}. So it is sufficient to treat the case that $\mc{N}(|u|) = |u|^2$. 
Let $\chi_1u = u_{L^2} + u_{L^\infty}$ be a division of $u$ into an $O(t^{-1/2})$ part in $L^2$ and an $O(t^{-1/2})$ part in $L^\infty$. Notice that $u_{L^2}$ is uniformly bounded in $L^\infty$ since this is true for the other terms in this decomposition; similarly, $u_{L^\infty}$ is uniformly bounded in $L^2$. 
We have, for $t \geq 1$, 
\begin{align}
\begin{split}
    \| \chi_1(\cdot, t)^3 |u|^{2}u \|_{L^{6/5}}
& \lesssim \| \chi_1(\cdot, t) u \|_{L^{18/5}}^3
\\& \lesssim \|  u_{L^2} \|_{L^{18/5}}^3 + \|  u_{L^\infty} \|_{L^{18/5}}^3   
\\ & \lesssim \|  u_{L^2} \|_{L^{2}}^{5/3}\|  u_{L^2} \|_{L^{\infty}}^{4/3}
 + \|  u_{L^\infty} \|_{L^{2}}^{5/3}\|  u_{L^\infty} \|_{L^{\infty}}^{4/3}
\lesssim \la t \ra^{-2/3},
\end{split}
\end{align}
which is in $L^2_t$. This proves \eqref{eq:input-L2-2} with $\chi_1$ replaced with $\chi_1^3$. To obtain the factor $\chi_1$ we rerun the argument with a function $\chi_1''$ that is equal to $1$ on the support of $\chi_1$ and supported where $\chi_1' = 1$, and then note that $\chi_1 = (\chi_1'')^3 f$ for some bounded function $f$. 
\end{proof}

\begin{remark}
This Proposition provides one of the tools we need in the following sections to prove scattering. Recall that by Duhamel's formula, we can write the solution $u(t)$ to \eqref{eq:nls} as 
$$
u(t) = e^{-it\Delta} u(0) + (-i) \int_0^t e^{-i(t-s)\Delta} \mc{N}(|u(s)|) u(s) \, ds. 
$$
Suppose the assumptions of Proposition~\ref{prop:u exterior decay} are satisfied. Then we write the integral as 
$$
e^{-it\Delta} \Big( \int_0^t e^{is\Delta} \chi_1(\cdot, s) \mc{N}(|u(s)|) u(s) \, ds + \int_0^t e^{is\Delta} (1-\chi_1)(\cdot, s) \mc{N}(|u(s)|) u(s) \, ds \Big).
$$
Then Proposition~\ref{prop:u exterior decay} provides a means to show that the first term scatters. Indeed, the convergence of the integral 
$$
\int_0^t e^{is\Delta} \chi_1(\cdot, s) \mc{N}(|u(s)|) u(s) \, ds
$$
as $t \to \infty$ follows from \eqref{eq:input-L2-2} and  the adjoint endpoint Strichartz estimate, since the $L^2_t$-norm of $\| \chi_1(s) \mc{N}(|u(s)|)u(s) \|_{L^{6/5}_x}$ on $[t, \infty)$ tends to zero as $t \to \infty$. The second term, with factor $1-\chi$, will be analyzed using non-stationary phase estimates. 
\end{remark}

\section{Scattering outside a single cone} \label{sec:onebubble}
\label{sec:one-bubble}


We prove the special case of Theorem~\ref{thm:main} in which the cone $C^\circ$ takes the form $\{ |x| > c_0|t|\}$ for a fixed $c_0 \in \R$ in this section, which may be large or small. We call this the `one-bubble setting'. (In this one-bubble setting, there is no need for the complementary cone, where $|u|$ is not necessarily small,  to be `thin', as formulated in Theorem~\ref{thm:main}.)  There is no loss of generality in assuming that the cone is centred at $x=0$, that is, at zero velocity, as this can always be arranged with a Galilean transformation. 
This special case illustrates the proof strategy and the proof can be adapted to the more complicated multi-bubble setting.

We fix $\delta_0 \ll 1$, and employ a smooth non-decreasing function $F:\R \to[0,1]$ such that
\begin{align}
\begin{split}
F(\lambda)=0 \quad (\lambda\leq 1-\delta_0), \qquad F(\lambda)=1 \quad (\lambda\geq1), \qquad \sqrt{F'} \in C^\infty(\R). 
\end{split} 
\label{eq:F-convention}
\end{align}
And we denote
\begin{align}\label{eq:Fscaled}
    \overline F=1-F, \quad F_A(\lambda)=F(\lambda/A), \quad \overline F_A=1-F_A.
\end{align}

Our task is to prove scattering in the region $|x| \geq c t$ for any $c > c_0$. 
We choose intermediate constants $c_0 < c_1 < c_2 < c_3  < c_4 = c$ and assume that $\delta_0$ is small compared $c_{j+1}/c_j - 1$, for $j = 0 \dots 3$. 
We recall that under the hypothesis \eqref{eq:cone condition 2} in the one bubble zero-velocity setting, Proposition~\ref{prop:u exterior decay} and its proof give
\begin{align}
\|F(|x|/(c_1t))\mc{N}(|u|)u\|_{L^2_tL^{6/5}_x([T,\infty)\times\R^3)}
, \quad \|F(|x|/c_1t)|u|^2u\|_{L^2_tL^{6/5}_x([T,\infty)\times\R^3)}<\infty. \label{eq:one-bubble-exterior-source}
\end{align}
In this setting, our goal is to prove that there exists $u_+\in H^1(\R^3)$ such that
\begin{align}
\|F(|x|/ct)(u(t)-e^{-it\Delta}u_+)\|_{L^2_x}\longrightarrow0. \label{eq:goal-one-bubble}
\end{align}

\subsection{Weak limit}


\begin{lemma}
\label{lem:weak-limit-u-plus}
There exists $u_+\in H^1(\R^3)$ such that
\begin{align}
e^{it\Delta}u(t)\rightharpoonup u_+ \quad\text{weakly in }L^2_x, \qquad \|u_+\|_{H^1_x}\leq\liminf_{t\to\infty}\|u(t)\|_{H^1_x}. \label{eq:weak-limit}
\end{align}
\end{lemma}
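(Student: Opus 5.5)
We need to prove that $e^{it\Delta}u(t)$ converges weakly in $L^2$. The limit will lie in $H^1$, with the stated norm bound. The route: reduce to testing against Schwartz functions, and show that the pairing with a fixed Schwartz function is Cauchy in $t$.

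Put $v(t) := e^{it\Delta}u(t)$. Since $e^{it\Delta}$ is unitary on $H^1$ and commutes with derivatives, Proposition~\ref{prop:N basic properties}(i) gives a uniform bound $\|v(t)\|_{H^1} = \|u(t)\|_{H^1} \le C(\mathscr{M},\mathscr{E})$. By density of $\Schw$ in $L^2$, together with this uniform $L^2$ bound, it is enough to show that $\langle v(t),\phi\rangle$ converges as $t\to\infty$ for every $\phi\in\Schw$. Duhamel's formula gives, for $t_2 > t_1$,
\begin{equation*}
\langle v(t_2)-v(t_1),\phi\rangle = -i\int_{t_1}^{t_2}\big\langle \mc{N}(|u(s)|)u(s),\, e^{-is\Delta}\phi\big\rangle\, ds .
\end{equation*}
We bound the integrand by $\|\mc{N}(|u(s)|)u(s)\|_{L^1}\,\|e^{-is\Delta}\phi\|_{L^\infty}$. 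The first factor is uniformly bounded by \eqref{eq:nonlinear Lp bound} with $p=1$. The free dispersive estimate bounds the second factor by $C\langle s\rangle^{-3/2}\|\phi\|_{L^1\cap L^\infty}$ (for $s \le 1$, Sobolev embedding for $\phi$ suffices). The integrand is therefore integrable on $[0,\infty)$, and the increment tends to $0$ as $t_1,t_2\to\infty$. So $\langle v(t),\phi\rangle$ is Cauchy and converges.

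Next we pass to all of $L^2$. For $g\in L^2$, pick $\phi\in\Schw$ with $\|g-\phi\|_{L^2}<\varepsilon$. The uniform bound $\sup_t\|v(t)\|_{L^2}=\mathscr{M}^{1/2}$ shows that $\langle v(t),g\rangle$ is Cauchy up to $2\varepsilon\mathscr{M}^{1/2}$, hence convergent. The limit functional is bounded, so by Riesz it is given by some $u_+\in L^2$, and $v(t)\rightharpoonup u_+$.

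It remains to get the $H^1$ bound. Choose $t_n\to\infty$ realizing $\liminf_{t\to\infty}\|u(t)\|_{H^1}$. The sequence $v(t_n)$ is bounded in $H^1$, so a subsequence converges weakly in $H^1$. Its $H^1$-weak limit must coincide with the $L^2$-weak limit $u_+$, since both agree when tested against Schwartz functions. Weak lower semicontinuity of the $H^1$ norm then gives $\|u_+\|_{H^1}\le\liminf_n\|v(t_n)\|_{H^1}=\liminf_{t\to\infty}\|u(t)\|_{H^1}$.

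No step is a serious obstacle. The only point needing care is that the integrability of the Duhamel pairing relies on the uniform $L^1$ bound \eqref{eq:nonlinear Lp bound} and the $t^{-3/2}$ decay of $e^{-is\Delta}\phi$ in $L^\infty$, and $3/2>1$ in three dimensions.
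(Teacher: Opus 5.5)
Your proposal is correct and follows the paper's proof essentially step for step: pair the Duhamel formula for $e^{it\Delta}u(t)$ with a test function, use the uniform $L^1$ bound \eqref{eq:nonlinear Lp bound} together with the $s^{-3/2}$ dispersive decay to get a Cauchy pairing, extend by density and the uniform $L^2$ bound, apply Riesz, and obtain the $H^1$ bound from weak compactness of a $\liminf$-realizing subsequence. The only difference is cosmetic: you test against Schwartz functions, while the paper tests against $L^1\cap L^2$.
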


\begin{proof}
The existence of such weak limits is well known; see for example \cite{Nakanishi-asymptotic-free,Strauss-book}. For completeness we provide a proof. 
Set $w(T):=e^{iT\Delta}u(T)$, and part (iii) of Proposition~\ref{prop:N basic properties} shows that 
\begin{align}\label{eq:source-L1-standing}
\sup_{s\ge 0}\|\mathcal N(|u(s)|)u(s)\|_{L^1_x} =  C(\mathscr{M},\mathscr{E}) < \infty.
\end{align}

By mass conservation, $\|w(T)\|_{L^2_x}=\|u(T)\|_{L^2_x}=\mathscr{M}^{1/2}$ for all $T$, so the family $(w(T))_{T\ge 0}$ is bounded in $L^2_x$. Applying $e^{iT\Delta}$ to the Duhamel formula and using $e^{iT\Delta}e^{-i(T-s)\Delta}=e^{is\Delta}$, we obtain
\begin{align}\label{eq:w-duhamel}
w(T) = u(0) - i\int_0^T e^{is\Delta}\,\mathcal N(|u(s)|)u(s)\,ds.
\end{align}
Consequently, for $T_2>T_1$ and any $\varphi\in L^2_x$, using $(e^{is\Delta})^*=e^{-is\Delta}$, we have
\begin{align}\label{eq:w-pairing-diff}
\langle w(T_2)-w(T_1),\varphi\rangle = -i\int_{T_1}^{T_2}\langle \mathcal N(|u(s)|)u(s),\,e^{-is\Delta}\varphi\rangle\,ds.
\end{align}

Let $\varphi\in L^1_x\cap L^2_x$. By the free dispersive estimate $\|e^{-is\Delta}\varphi\|_{L^\infty_x}\le C s^{-3/2}\|\varphi\|_{L^1_x}$ together with \eqref{eq:source-L1-standing}, for $T_2>T_1\ge 1$ the pairing \eqref{eq:w-pairing-diff} is estimated by
\begin{align}\label{eq:w-cauchy-rate}
|\langle w(T_2)-w(T_1),\varphi\rangle| \le \int_{T_1}^{T_2}\|\mathcal N(|u(s)|)u(s)\|_{L^1_x}\|e^{-is\Delta}\varphi\|_{L^\infty_x}\,ds \le 2C(\mathscr{M},\mathscr{E})\,\|\varphi\|_{L^1_x}\,T_1^{-1/2}.
\end{align}
Hence $T\mapsto\langle w(T),\varphi\rangle$ is Cauchy as $T\to\infty$ for every $\varphi\in L^1_x\cap L^2_x$.
In addition, $|\la w(T), \varphi \ra| \leq \mathscr{M}^{1/2} \| \varphi \|_{L^2}$ uniformly.
Then a standard density argument extends to the general case with $\varphi \in L^2_x$. Thus the linear functional $l(\varphi) := \lim_{T \to \infty} \la w(T), \varphi \ra$ is bounded.  
By the Riesz representation theorem, we know there exists $u_+ \in L^2_x$ such that 
$$\la u_+,\varphi \ra = \lim_{T\to\infty}\la w(T),\varphi \ra$$ 
holds for all $\varphi \in L^2$. Thus $u_+$ is the weak limit of $w(T)$ as $T \to \infty$, and it satisfies $\| u_+ \|_{L^2_x} \leq \lim\inf_{T \to \infty} \| w(T) \|_{L^2_x} = \mathscr{M}^{1/2}$. 

Next we show that $u_+ \in H^1$. We recall from Proposition~\ref{prop:N basic properties} that $u(t)$ is uniformly bounded in $H^1$. 
Choose a sequence of time $t_n \to \infty$ such that
\begin{align*}
    \|u(t_n)\|_{H^1}\to \liminf_{t\to\infty}\|u(t)\|_{H^1}.
\end{align*}
After passing to a subsequence
$$e^{it_n\Delta}u(t_n)\rightharpoonup v
\quad\text{weakly in }H^1.$$
This convergence is stronger than weak convergence in $L^2$, so such a subsequence also converges weakly in $L^2$. As the whole sequence converges weakly in $L^2$ to $u_+$, it follows that $v = u_+$. Thus $u_+$ is in $H^1$ with the claimed norm bound. 
\end{proof}

\subsection{Scattering with longitudinal cut-off}
Fix $e\in S^2$ and write
\begin{align}
x^\parallel=x\cdot e, \qquad D^\parallel=D\cdot e. \label{eq:one-bubble-parallel-variables}
\end{align}
We will call $x^\parallel$ and $\xi^\parallel$ \emph{longitudinal} spatial and frequency variables, respectively, and $D^\parallel$ the longitudinal derivative. 
Define longitudinal conic cutoffs $\Xi_\pm$, depending only on $x^\parallel$ and $t$:
\begin{align}
\Xi_\pm(t)=F\big(\pm \frac{x^\parallel}{c_3 t}\big), 
\label{eq:3d-outer-regions}
\end{align}
and frequency cutoffs, which we call resonant and non-resonant:
\begin{align}
Q_{\res, \pm}=F\big(\pm \frac{2D^\parallel}{c_2}\big), \qquad Q_{\nr, \pm}=1-Q_{\res, \pm}. 
\label{eq:3d-Pi-split}
\end{align}
\begin{proposition}
\label{prop:one-bubble-directional-scattering}
For every $e\in S^2$,
\begin{align}
\|\Xi_-(t)(u(t)-e^{-it\Delta}u_+)\|_{L^2_x}\longrightarrow0. \label{eq:one-bubble-directional-scattering}
\end{align}
Indeed, the following statements hold, where we employ a cutoff $F(|D^\parallel|/\sqrt{t})$, to restrict to frequencies with $|\xi^\parallel| \gtrsim \sqrt{t}$, as well as its complementary cutoff $\overline{F}(|D^\parallel|/\sqrt{t})$. They 
 collectively imply \eqref{eq:one-bubble-directional-scattering}. 
\begin{align}
  \|F(\frac{|D^\parallel|}{\sqrt t})u(t)\|_{L^2_x}+\|F(\frac{|D^\parallel|}{\sqrt t})e^{-it\Delta}u_+\|_{L^2_x}&\longrightarrow 0. \label{eq:IIb-vanish-1} \\
 \|\Xi_-(t)Q_{\res, -}\overline{F}(\frac{|D^\parallel|}{\sqrt t})(u(t)-e^{-it\Delta}u_+)\|_{L^2_x}&\longrightarrow0. \label{eq:one-bubble-resonant-scattering-1} \\
 \|\Xi_-(t)Q_{\nr, -}\overline{F}(\frac{|D^\parallel|}{\sqrt t}) u(t)\|_{L^2_x} + \|\Xi_-(t)Q_{\nr, -}\overline{F}(\frac{|D^\parallel|}{\sqrt t})e^{-it\Delta} u_+ \|_{L^2_x}&\longrightarrow0. \label{eq:3d-outer-mid-vanishes-1}
\end{align}
Moreover, the corresponding statements hold for $\Xi_+$. 
\end{proposition}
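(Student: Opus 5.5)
The plan is to prove \eqref{eq:IIb-vanish-1}--\eqref{eq:3d-outer-mid-vanishes-1} separately for $\Xi_-$; the case $\Xi_+$ follows by replacing $e$ with $-e$. Since $F(\cdot)+\overline F(\cdot)=1$ and $Q_{\res,-}+Q_{\nr,-}=1$, and $\Xi_-$ is bounded, the three limits together give \eqref{eq:one-bubble-directional-scattering}.

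\textbf{High frequencies, \eqref{eq:IIb-vanish-1}.} This is immediate. On the support of $F(|\xi^\parallel|/\sqrt t)$ we have $|\xi|\gtrsim\sqrt t$, so
\begin{equation*}
\|F(|D^\parallel|/\sqrt t)g\|_{L^2}\lesssim t^{-1/2}\|g\|_{H^1}.
\end{equation*}
We apply this with $g=u(t)$, which is uniformly bounded in $H^1$ by Proposition~\ref{prop:N basic properties}(i). We also apply it with $g=e^{-it\Delta}u_+$, using $u_+\in H^1$ from Lemma~\ref{lem:weak-limit-u-plus}.

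\textbf{Resonant part, \eqref{eq:one-bubble-resonant-scattering-1}.} We apply Lemma~\ref{lem:tail estimate} with $A(t)=\Xi_-(t)Q_{\res,-}\overline F(|D^\parallel|/\sqrt t)$, which is uniformly bounded on $L^2$. It then suffices to verify \eqref{eq:ext-Cauchy}. We split the source as $\mc N(|u|)u=F(|x|/c_1s)\mc N(|u|)u+\overline F(|x|/c_1s)\mc N(|u|)u$.

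\emph{Exterior piece.} The Fourier multipliers commute with $e^{-i(t-s)\Delta}$. The adjoint endpoint Strichartz estimate then bounds the tail $\int_t^{T_1}$ by the $L^2_sL^{6/5}_x([t,\infty))$ norm of the source. This tends to $0$ by \eqref{eq:one-bubble-exterior-source}.

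\emph{Interior piece.} This piece is supported in $|y|\le c_1 s$, so in particular $y^\parallel\ge -c_1 s$. On the support of $Q_{\res,-}$ the longitudinal frequency satisfies $2\xi^\parallel\le -(1-\delta_0)c_2$. The propagator $e^{i(s-t)\Delta}$ runs backwards for a time $s-t\ge0$, so the classical trajectories land at
\begin{equation*}
x^\parallel=y^\parallel-2\xi^\parallel(s-t)\ge -c_1 t+(c_2'-c_1)(s-t),
\end{equation*}
where $c_2'=(1-\delta_0)c_2>c_1$. This lies well to the right of $\operatorname{supp}\Xi_-\subset\{x^\parallel\le-(1-\delta_0)c_3t\}$, with a gap $\gtrsim t+(s-t)$.

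Because every cutoff depends only on $x^\parallel$ and $\xi^\parallel$, the operator factorises. It is a free unitary flow in the transverse variables tensored with a one-dimensional oscillatory integral in the longitudinal variable. I would integrate by parts in $\xi^\parallel$, using that $\partial_{\xi^\parallel}$ of the phase is $x^\parallel-y^\parallel+2\xi^\parallel(s-t)$, of size $\gtrsim t+(s-t)$. The symbols $Q_{\res,-}$ and $\overline F(|\xi^\parallel|/\sqrt t)$ have bounded derivatives in $\xi^\parallel$, and $\overline F$ keeps $|\xi^\parallel|\lesssim\sqrt t$. Each integration by parts should therefore gain a factor $\lesssim (1+(s-t))/(t+(s-t))^2\lesssim (t+(s-t))^{-1}$. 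A Schur test in $x^\parallel$ then gives an $L^2\to L^2$ bound of $O((t+(s-t))^{-N})$, applied to the source, which is uniformly bounded in $L^2$ by Remark~\ref{rem:ext}. Integrating over $s\in[t,\infty)$ gives $O(t^{1-N})\to0$.

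\textbf{Nonresonant part, \eqref{eq:3d-outer-mid-vanishes-1}.} Here I integrate forwards from $0$ instead:
\begin{equation*}
u(t)=e^{-it\Delta}u(0)-i\int_0^te^{-i(t-s)\Delta}\mc N(|u(s)|)u(s)\,ds.
\end{equation*}
\emph{Free terms.} For the terms $e^{-it\Delta}u(0)$ and $e^{-it\Delta}u_+$, approximate the data by Schwartz functions using unitarity. The stationary phase expansion \eqref{eq:spe} localises $e^{-it\Delta}w$ near $x=2\xi t$. On $\operatorname{supp}Q_{\nr,-}$ we have $2\xi^\parallel\ge -c_2$, which is disjoint from $\operatorname{supp}\Xi_-$, where $x^\parallel/t\le-(1-\delta_0)c_3$. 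Hence these terms tend to $0$.

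\emph{Interior Duhamel piece.} For $\overline F(|x|/c_1s)\mc N u$ the trajectories land at
\begin{equation*}
x^\parallel=y^\parallel+2\xi^\parallel(t-s)\ge -c_1s-c_2'(t-s)\ge -c_2t,
\end{equation*}
again separated from $\operatorname{supp}\Xi_-$ by a gap $\gtrsim t$. The same one-dimensional non-stationary phase bound gives $O(t^{-N})$ uniformly in $s\in[0,t]$, hence $o(1)$ after integrating in $s$.

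\emph{Exterior Duhamel piece.} Fix $S$ and split $[0,t]=[0,S]\cup[S,t]$. On $[S,t]$, Strichartz and \eqref{eq:one-bubble-exterior-source} give a bound that is small uniformly in $t$ once $S$ is large. On $[0,S]$, the contribution equals $e^{-it\Delta}g_S$ for a fixed $g_S\in L^2$, so it tends to $0$ as $t\to\infty$ by the free-term argument.

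\textbf{Main obstacle.} The step I expect to be hardest is the non-stationary phase bound. It must hold uniformly with only $L^2$ (or $L^1$) control on the source, and the decay must be integrable in $s$ on the unbounded interval $[t,\infty)$ in the resonant case. This is why I work with longitudinal cutoffs only: they reduce the problem to a one-dimensional kernel estimate. I would first attempt it by writing the explicit one-dimensional kernel and integrating by parts twice.
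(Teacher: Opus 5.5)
Your proposal is correct and follows essentially the same route as the paper. You handle high longitudinal frequencies with the uniform $H^1$ bound. The resonant part goes through Lemma~\ref{lem:tail estimate}, with Strichartz on the exterior piece and one-dimensional non-stationary phase in $\xi^\parallel$ (Duhamel from $t$ to $\infty$) on the interior piece. The nonresonant part uses Duhamel from $0$ to $t$, the free asymptotics, and the same non-stationary phase. Your $[0,S]\cup[S,t]$ splitting is equivalent to the paper's observation that $\int_0^t e^{is\Delta}\chi\,\mc{N}(|u|)u\,ds$ converges in $L^2$.

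Two small points. On $\supp Q_{\nr,-}$ the bound is $2\xi^\parallel\ge -c_2$ rather than $-c_2'$, though this still gives the landing bound $x^\parallel\ge -c_2t$ you use. For the Schur test you need the lower bound $|\partial_{\xi^\parallel}\Phi|\gtrsim t+s+|x^\parallel-y^\parallel|$, not just $t+s$; it follows from the same support considerations.
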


\begin{proof}
It suffices to prove the statement for $\Xi_-$; the proof for $\Xi_+$ is obtained by replacing $x^\parallel$ and $D^\parallel$ by $-x^\parallel$ and $-D^\parallel$. It is also straightforward to show that \eqref{eq:IIb-vanish-1} --- \eqref{eq:3d-outer-mid-vanishes-1} imply \eqref{eq:one-bubble-directional-scattering}, so we focus on proving these three limiting statements. 

The first statement, \eqref{eq:IIb-vanish-1}, is straightforward. 
Indeed, the uniform $H^1_x$ bound on $u(t)$ and Lemma~\ref{lem:weak-limit-u-plus} immediately imply the stronger statement 
\begin{align}
\|F(\frac{|D^\parallel|}{\sqrt t})u(t)\|_{L^2_x}+\|F(\frac{|D^\parallel|}{\sqrt t})e^{-it\Delta}u_+\|_{L^2_x}\lesssim t^{-1/2}. \label{eq:IIb-vanish}
\end{align}

To prove the second statement \eqref{eq:one-bubble-resonant-scattering-1}, by
Lemma~\ref{lem:tail estimate}, it is enough to verify \eqref{eq:ext-Cauchy} with
\begin{align}
A(t)=\Xi_-(t)Q_{\res}\overline{F}(\frac{|D^\parallel|}{\sqrt t}).
\end{align}
We choose a conic cutoff $\chi$, equal to $0$ when $|x| \leq c_0 t$ and $1$ when $|x| \geq c_1 t$. For example we can take $\chi = F(|x|/(c_1t))$. 
Split the nonlinear source in \eqref{eq:ext-Cauchy} as
\begin{align}
\mc{N}(|u(s)|)u(s)=\chi(s) \mc{N}(|u(s)|)u(s)+ (1-\chi(s))\mc{N}(|u(s)|)u(s). \label{eq:Ib-split}
\end{align}
For the contribution from $\chi \mc{N}(|u(s)|)u(s)$, the adjoint endpoint Strichartz estimate and \eqref{eq:one-bubble-exterior-source} give
\begin{align}
\begin{split}
\sup_{T_1\geq t}\|\int_t^{T_1}A(t)e^{-i(t-s)\Delta}\chi(s) \mc{N}(|u(s)|)u(s)\,ds\|_{L^2_x}&\lesssim\|\chi\mc{N}(|u|)u\|_{L^2_sL^{6/5}_x([t,\infty))}
\longrightarrow0.
\end{split} \label{eq:Pplus-ext}
\end{align}

For the contribution from $(1-\chi)\mc{N}(|u|)u$, we will use a non-stationary phase argument in the $\xi^\parallel$ variable. 
The integral kernel of this piece is $-i$ times 
\begin{equation}\label{eq:res integral piece}
 \int\limits_t^{T_1} \! e^{-i(t-s)\Delta_\perp} \!\!\!\!\! \int\limits_{-C\sqrt{t}}^{C\sqrt{t}} \!\!\!\!  e^{i\big((x^\parallel - y^\parallel) \xi^\parallel + (s-t) (\xi^\parallel)^2\big)} \, \Xi_-(x^\parallel, t) q_{\res}(\xi^\parallel) \overline{F}\big(\frac{|\xi^\parallel|}{\sqrt{t}}\big) (1-\chi(y,s))  \mc{N}(|u(y,s)|) u(y,s)    d\xi^\parallel  ds
\end{equation}
where $\Delta_\perp$ is the Laplacian in the plane orthogonal to $e$. (Notice that this operator, and any function of it, commutes with $\Xi_-(t)$ as $\Xi_-(t)$ is a multiplication operator depending only on $x^\parallel$ and $t$.) 
To prove that the $L^2$ norm of this function tends to zero as $t \to \infty$, it is sufficient to show that the operator norm of the function on $L^2(\R)$ with integral kernel, for fixed $y_\perp$, given by
\begin{equation}\label{eq:K1}
K_{y_\perp, t, s}(x^\parallel, y^\parallel) := \int  e^{i\Big((x^\parallel - y^\parallel) \xi^\parallel + (s-t) (\xi^\parallel)^2\Big)} \, \Xi_-(x^\parallel, t) q_{\res}(\xi^\parallel) \overline{F}\big(\frac{|\xi^\parallel|}{\sqrt{t}}\big) (1-\chi(y,s))  \,   d\xi^\parallel 
\end{equation}
is $O((s+t)^{-2})$, uniformly in $y_\perp$. 

The longitudinal phase function $\Phi$ is 
\begin{align}
\Phi^\parallel=(x^\parallel-y^\parallel)\xi^\parallel-(t-s)(\xi^\parallel)^2, \qquad \partial_{\xi^\parallel}\Phi^\parallel=x^\parallel-y^\parallel+2(s-t)\xi^\parallel. \label{eq:one-bubble-resonant-phase}
\end{align}
On the support of $\Xi_-(t)$, $Q_{\res}$, and $1-\chi(s)$, with $s\geq t$, one has
\begin{align}
x^\parallel\leq -c_3(1-\delta_0)t, \qquad y^\parallel\geq -c_1s, \qquad 2\xi^\parallel\leq- c_2(1-\delta_0),
\end{align}
and therefore
\begin{align}
\partial_{\xi^\parallel}\Phi^\parallel\leq-c_3(1-\delta_0)t+c_1s-c_2(1-\delta_0)(s-t).  \label{eq:Pplus-phase-bound}
\end{align}
Since $c_1 < c_2 < c_3$, we have 
\begin{align}
|\partial_{\xi^\parallel}\Phi^\parallel| \geq C_{c_1, c_2, c_3} \Big( (t+s)+|x^\parallel-y^\parallel| \Big). \label{eq:one-bubble-resonant-phase-full}
\end{align}
This non-stationarity is illustrated in the left box of Figure~\ref{fig:nsp}. 
Repeated integration by parts in $\xi^\parallel$ shows that $K_{y_\perp,t,s}$ has operator norm on $L^2(\R)$ bounded by any negative power of $t+s$ as $t \to \infty$, uniformly in $y^\perp$.  
Propositions~\ref{prop:N basic properties} and Proposition~\ref{prop:Linfty} give
\begin{align}
\sup_{s\geq0}\|\mc{N}(|u(s)|)u(s)\|_{L^2_x}<\infty, \label{eq:N-Linfty-L2}
\end{align}
and the transverse free evolution $e^{-i(t-s)\Delta_\perp}$ is treated by $L^2(\R^2)$ unitarity. Putting these estimates into the integral \eqref{eq:res integral piece}, we see that the $L^2$ norm of this expression tends to zero as $t \to \infty$. Combined with \eqref{eq:Pplus-ext}, we have proved  \eqref{eq:one-bubble-resonant-scattering-1}.

Next we prove \eqref{eq:3d-outer-mid-vanishes-1}. We start with the first term, which we write using Duhamel's formula:
\begin{equation}\label{eq:Xi-Duhamel}
\Xi_-(t)Q_{\nr}\overline{F}(\frac{|D^\parallel|}{\sqrt t}) u(t) = 
\Xi_-(t)Q_{\nr}\overline{F}(\frac{|D^\parallel|}{\sqrt t}) \Big( e^{-it\Delta} u(0) -i \int_0^t e^{-i(t-s)\Delta} \mc{N}(|u(s)|) u(s) \, ds \Big). 
\end{equation}
The first term tends to zero in $L^2$, since $Q_{\nr}$ is, by construction, non-resonant with respect to $\Xi_-$; that is, for a scattering solution to end up asymptotically in the support of $\Xi_-(t)$ for large $t$, it should have frequencies with 
$\xi^\parallel \leq -\frac{(1-\delta_0)c_3}{2}$ (apply Lemma~\ref{lem:asymptotic-velocity} with $a = \Xi_-$), yet $Q_{\nr}$ is chosen to vanish on all such frequencies. 

For the integral term, we use the conic cutoff $\chi$ above, recalling that the Strichartz estimate \eqref{eq:one-bubble-exterior-source} is available on the support of $\chi$. 
Then this piece of the integral term is  
\begin{equation}
\Xi_-(t)Q_{\nr}\overline{F}(\frac{|D^\parallel|}{\sqrt t}) \int_0^t e^{-i(t-s)\Delta} \chi(s) \mc{N}(|u(s)|) u(s) \, ds   . 
\end{equation}
We pull the operator $e^{-it\Delta}$ outside the integral, and then thanks to the Strichartz estimate, the integral 
$$
\int_0^t e^{is\Delta} \chi(s) \mc{N}(|u(s)|) u(s) \, ds
$$
converges as $t \to \infty$, to a function $w \in L^2$. That is, 
$$
\int_0^t e^{is\Delta} \chi(s) \mc{N}(|u(s)|) u(s) \, ds = w + o_{L^2}(1), t \to \infty. 
$$
The $w$ term is treated the same way as the $u(0)$ term, and the $o_{L^2}(1)$ remainder is negligible in the limit as $\Xi_-(t)Q_{\nr}\overline{F}(\frac{|D^\parallel|}{\sqrt t}) e^{-it\Delta}$ is a uniformly bounded family of operators as $t \to \infty$. 

So we are reduced to considering the integral term with integrand supported near the bubble. Then we make another non-stationary phase argument similar to the one above. The integral kernel of this piece is $-i$ times 
\begin{equation*}
 \int\limits_0^t \! e^{-i(t-s)\Delta_\perp} \!\!\!\!\! \int\limits_{-C\sqrt{t}}^{C\sqrt{t}}  \!\!\! e^{i\big((x^\parallel - y^\parallel) \xi^\parallel + (s-t) (\xi^\parallel)^2\big)}  \Xi_-(x^\parallel, t) q_{\nr}(\xi^\parallel) \overline{F}\big(\frac{|\xi^\parallel|}{\sqrt{t}}\big) (1-\chi(y,s))  \mc{N}(|u(y,s)|) u(y,s)    d\xi^\parallel  ds.
\end{equation*}
To prove that the $L^2$ norm of this function tends to zero as $t \to \infty$, it is sufficient to show that the operator norm of the function on $L^2(\R)$ with integral kernel, for fixed $y_\perp$, given by
\begin{equation}\label{eq:K2}
K_{y_\perp,t,s}(x^\parallel, y^\parallel) := \int  e^{i\Big((x^\parallel - y^\parallel) \xi^\parallel + (s-t) (\xi^\parallel)^2\Big)} \, \Xi_-(x^\parallel, t) q_{\nr}(\xi^\parallel) \overline{F}\big(\frac{|\xi^\parallel|}{\sqrt{t}}\big) (1-\chi(y,s))  \,   d\xi^\parallel 
\end{equation}
tends to zero faster than $1/t$, uniformly in $y_\perp$. 

\begin{figure}[htbp]
    \centering 
    \includegraphics[width=0.9\textwidth]{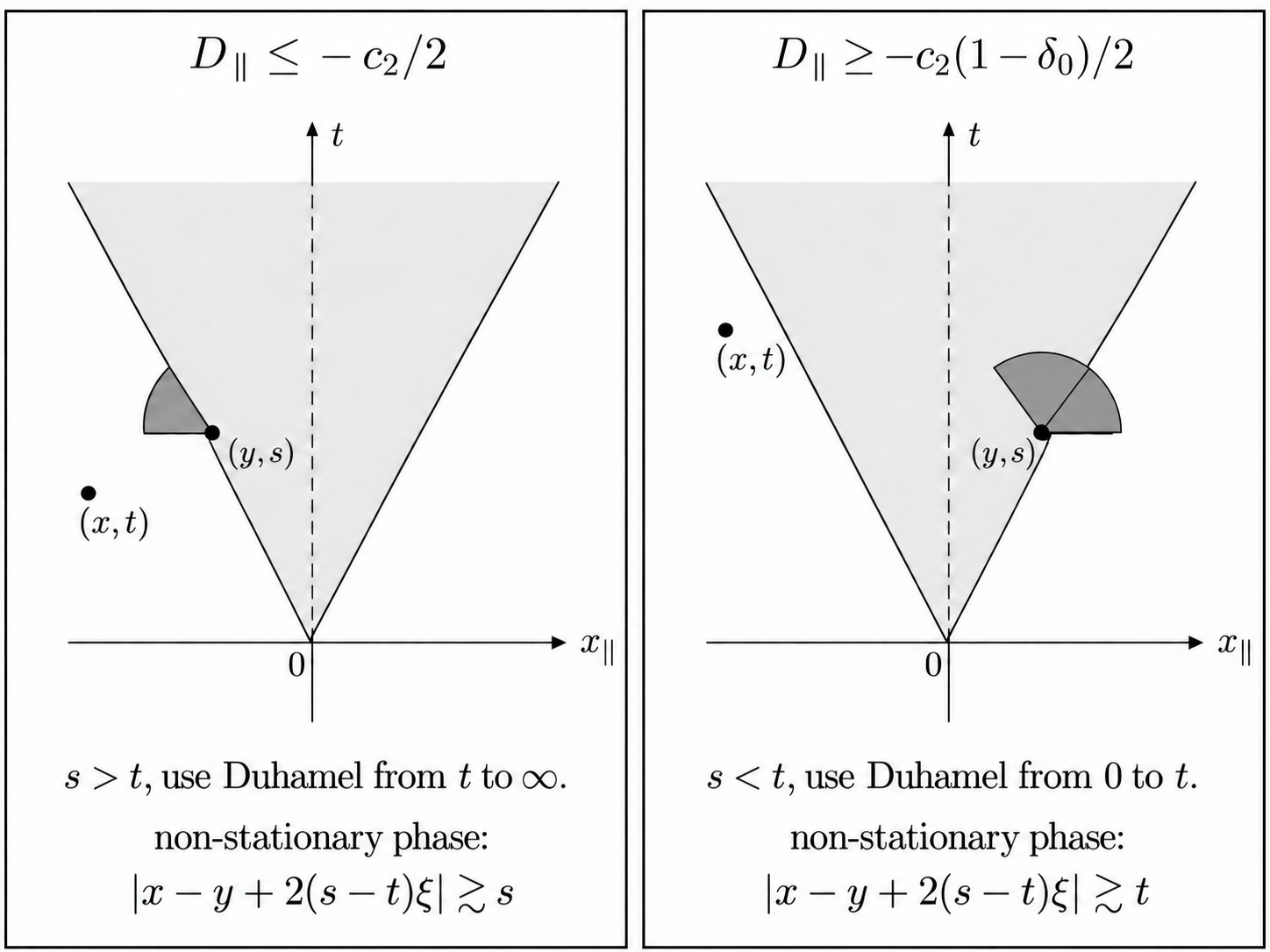} 
    \caption{A visual illustration of the non-stationary phase arguments. The dark shaded circular sector indicates the frequency decomposition: on the left is the range of velocities corresponding to resonant frequencies, and on the right, non-resonant frequencies. Non-stationary phase means that we never have $y-2(s-t)\xi=x$ on the support of the integrand of \eqref{eq:K1}, respectively \eqref{eq:K2}. 
    On the left, we choose $s > t$, corresponding to using Duhamel from $t$ to $\infty$, and we see that we cannot move from $(y, s)$ to $(x, t)$ with  velocity that is a negative multiple of the set of resonant (shaded) directions. On the left, we choose $s < t$, corresponding to using Duhamel from $0$ to $t$, and we see that again, we cannot move from $(y,s)$ to $(x, t)$ with velocity that is a positive multiple of the set of nonresonant (shaded) directions.}
    \label{fig:nsp} 
\end{figure}

We show that this phase $\Phi$ is non-stationary, on the support of the integrand. (This phase is as in \eqref{eq:one-bubble-resonant-phase}.) On the support of the integrand, we have $x^\parallel \leq -c_3(1-\delta_0)t$, $y^\parallel \geq -c_1s$, while from the frequency cutoff $q_{\nr}$ we have $2\xi^\parallel \geq -c_2$. So, 
\begin{equation}
x^\parallel - y^\parallel + 2(s-t) \xi^\parallel \leq -c_3(1-\delta_0)t + c_1 s + (t-s) c_2. 
\end{equation}
From this, as $c_1 < c_2 < c_3$, we have 
$$
|d_{\xi^\parallel}\Phi| \geq C(c_1, c_2, c_3) \Big( |x^\parallel - y^\parallel| + t \Big).
$$
This non-stationarity is illustrated in the right box of Figure~\ref{fig:nsp}.
We can integrate by parts repeatedly as before, showing that this operator has operator norm on $L^2(\R)$ tending to zero faster than any negative power of $t$ as $t \to \infty$. This proves that the first term of \eqref{eq:3d-outer-mid-vanishes-1} tends to zero. The second term is simpler and is treated the same way as the free part of the expression \eqref{eq:Xi-Duhamel}. This completes the proof of the third limiting statement \eqref{eq:3d-outer-mid-vanishes-1}, and hence completes the proof of  Proposition~\ref{prop:one-bubble-directional-scattering}. 

\end{proof}

\begin{remark}\label{rem:half space}
Observe that, in both of the non-stationary phase calculations above, non-stationarity of the phase is valid when $y^\parallel$ is in a half-space; it is not necessary to restrict $y$ to be in the bubble, or even to restrict $y^\parallel$ correspondingly. This observation is useful in the following section. 
\end{remark}

\subsection{Conclusion of the one-bubble argument}
\label{sec:conclusion-onebubble}
We have now shown that $u(t)$ scatters to $e^{-it\Delta} u_+$ on the cone where $\Xi_\pm = 1$ (see Definition~\ref{def:cone scattering}). We next exploit Lemma~\ref{lem:union of cones} to observe that, for any finite number of directions $e_1, \dots, e_N$, we have scattering in the \emph{union} of spacetime cones  $\{ |x_{(j)}^\parallel| \geq c_3 t \}$, where $x_{(j)}^\parallel$ is the longitudinal variable corresponding to direction $e_j$. Recalling that $c_3 < c$, we may choose sufficiently many well-spaced directions $e_j$ such that the union of all these spacetime cones includes $\{ |x| \geq c t \}$. This proves Theorem~\ref{thm:main} for the special case where $C_0 = \{ |x| > c_0 t \}$.

\section{Scattering near the spacetime equator}\label{sec:equator}

In this section our goal is to show, unconditionally, that $u$ scatters in the spacetime cone $\{ |x| > Kt \}$ for sufficiently large $K$; that is, the solution scatters in the far spacetime region $\{|x|\gtrsim Kt\}$ near the spatial infinity of each time slice. In view of the result of Section~\ref{sec:onebubble}, it only remains to show the smallness of $\| u \|_{L^\infty}$ in this spacetime cone. 
So we establish such $L^\infty$-smallness below; in fact we show that the $L^\infty$ norm on $\{ |x| > Kt \}$ tends to $0$ as $K\to\infty$, uniformly in $t\ge1$.

\begin{proposition}\label{prop:equator}
Let $\mc N$ be a nonlinearity in the class of Section~\ref{sec:preliminary-est}, and let $u$ be the solution of \eqref{eq:nls} with $u(0)\in\mc S$; fix $\beta\in(0,1/2)$. Then there are constants $C=C(\mathscr M,\mathscr E)$ and $\theta=\tfrac{2\beta}{2\beta+3}\in(0,1/4)$ such that for $K$ sufficiently large we have 
\begin{equation}\label{eq:equator}
 \lim_{K\to\infty}\|u(t)\|_{L^\infty_{x,t}(\{ |x| \geq Kt \} \cap \{ t \geq 1 \})}=0.
\end{equation}

\end{proposition}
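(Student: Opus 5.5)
Our plan is to combine the uniform H\"older bound of Corollary~\ref{col: holdercontinue} with a decay estimate for the mass of $u(t)$ in the far region $\{|x|\geq Kt\}$. The mechanism is that pointwise largeness of $|u|$ at one point forces, via H\"older continuity, a definite amount of mass nearby; so if we can show that the mass of $u(t)$ in $\{|x|\geq Kt/2\}$ tends to zero uniformly in $t\geq 1$ as $K\to\infty$, the $L^\infty$ smallness follows.

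\textbf{Step 1 (H\"older-to-mass).} Let $[u]_\beta\leq C_H$ be the uniform H\"older bound. If $|u(x_0,t)|=a$, then $|u|\geq a/2$ on $B(x_0,\rho)$ with $\rho=(a/(2C_H))^{1/\beta}$. Hence
\[
\int_{B(x_0,\rho)}|u(t)|^2\,dx\gtrsim a^2\rho^3\sim a^{2+3/\beta}.
\]
Inverting this gives $a\lesssim m^{\theta/2}$ with $\theta=2\beta/(2\beta+3)$, where $m$ is the local mass; this explains the exponent $\theta$ in the statement. If $|x_0|\geq Kt$ and $K\geq 2$, $t\geq 1$, then $B(x_0,\rho)\subset\{|x|\geq Kt/2\}$ once $\rho\leq 1$, which holds since $a$ is bounded by Proposition~\ref{prop:Linfty}. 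Thus
\[
\|u(t)\|_{L^\infty(|x|\geq Kt)}\lesssim \|u(t)\|_{L^2(|x|\geq Kt/2)}^{\theta}.
\]

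\textbf{Step 2 (far-field mass decay).} We need $\sup_{t\geq1}\|u(t)\|_{L^2(|x|\geq Kt/2)}\to0$ as $K\to\infty$. The approach is a virial/propagation estimate with the weight $\phi(x/(Rt))$, or more simply a uniform-in-time localized mass estimate. The derivative of the localized mass is
\[
\frac{d}{dt}\int \psi(x)|u|^2\,dx = 2\,\mathrm{Im}\int \bar u\,\nabla\psi\cdot\nabla u\,dx,
\]
and the nonlinearity drops out since it is real. With $\psi=\phi(|x|/L)$, where $\phi$ vanishes on $[0,1]$ and equals $1$ on $[2,\infty)$, this derivative is bounded by $C\|u\|_{H^1}^2/L$, using the uniform $H^1$ bound of Proposition~\ref{prop:N basic properties}. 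Integrating over $[0,t]$ gives
\[
\int\psi|u(t)|^2\leq \int\psi|u(0)|^2+Ct/L.
\]
Take $L=Kt/4$: the first term is at most the mass of $u(0)$ outside radius $Kt/4\geq K/4$, which is $o(1)$ as $K\to\infty$, uniformly in $t\geq1$; the second term is $O(1/K)$. So the mass in $\{|x|\geq Kt/2\}$ is $\lesssim o_K(1)+K^{-1}$ uniformly in $t\geq1$.

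\textbf{Step 3.} Combining the two steps yields
\[
\|u\|_{L^\infty(\{|x|\geq Kt\}\cap\{t\geq1\})}\lesssim \bigl(o_K(1)+K^{-1}\bigr)^{\theta/2}\to0,
\]
which proves \eqref{eq:equator}. The main obstacle is Step 2, getting the decay uniformly in $t$. The choice $L\propto Kt$ handles it because the flux bound is linear in $t$; the only subtle point is making the $t$-dependence of the cutoff harmless, which is why we use a fixed $L$ for each target time $t$ rather than a moving weight.
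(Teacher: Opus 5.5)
Your proposal is correct and follows essentially the same route as the paper. You obtain a far-field $L^2$ bound, uniform in $t\ge1$, by freezing a cutoff at scale $Kt$ at the target time, bounding the flux by $O(1/(Kt))$ with the uniform $H^1$ bound and integrating over $[0,t]$, and then upgrade to $L^\infty$ by H\"older continuity with the same exponent $\theta$. The paper instead commutes $F_K(|x|/t)$ through the unitary propagator of $\Delta+\mc N(|u|)$, where you differentiate the quadratic localized mass $\int\psi|u|^2$ directly, a cosmetic difference. Your claim $\rho\le1$ needs the H\"older constant taken at least as large as $\sup_t\|u(t)\|_{L^\infty_x}$ (or $K$ taken large), which is a harmless adjustment.
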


\begin{proof}
\emph{Step 1: $L^2$-smallness in the far region.}
Let $U(t,s)$ be the propagator generated by the self-adjoint, time-dependent Schr\"odinger operator $\Delta+V(t)$ with the real, uniformly bounded potential $V(x,t):=\mc N(|u(x,t)|)$ (bounded by Proposition~\ref{prop:Linfty} and the boundedness of $\mc N(|u|)$). By uniqueness $u(t)=U(t,0)u(0)$, and $U(t,s)$ is unitary on $L^2_x$. Let $F$ and $F_K$ be defined as in \eqref{eq:F-convention} and \eqref{eq:Fscaled}. 
Moving $F_{K} = F_K(|x|/t)$ through the propagator gives
\begin{equation}\label{eq:equator-commutator}
  F_{K}(|x|/t) u(t)=U(t,0)\,F_{K}(|x|/t) u(0)+i\int_0^t U(t,s)\,[\Delta,F_{K}(|x|/t)]\,u(s)\,ds,
\end{equation}
which follows from $\tfrac{d}{ds}\big(U(t,s)F_{K}(|x|/t) U(s,0)\big)=iU(t,s)[\Delta,F_{K}(|x|/t)]U(s,0)$, integrated over $s\in[0,t]$, and the fact that multiplication by the real potential $V(s)$ commutes with $F_{K}(|x|/t)$, so $[\Delta+V(s),F_{K}(|x|/t)]=[\Delta,F_{K}(|x|/t)]$.

Now $[\Delta,F_{K}(|x|/t)]=-2\nabla F_{K}(|x|/t)\cdot\nabla+(\Delta F_{K}(|x|/t))$, where $\nabla F_{K}(|x|/t)$ and $\Delta F_{K}(|x|/t)$ are supported in the annulus $\{|x|\sim Kt\}$ and satisfy $|\nabla F_{K}(|x|/t)|\lesssim (Kt)^{-1}$, $|\Delta F_{K}(|x|/t)|\lesssim (Kt)^{-2}$. Using the uniform bounds $\|\nabla u(s)\|_{L^2_x}+\|u(s)\|_{L^2_x}\le C(\mathscr M,\mathscr E)$ from Proposition~\ref{prop:N basic properties},
\begin{equation}\label{eq:equator-comm-bd}
  \|[\Delta, F_{K}(|x|/t)]u(s)\|_{L^2_x}\lesssim \frac{1}{Kt}\|\nabla u(s)\|_{L^2_x}+\frac{1}{(Kt)^2}\|u(s)\|_{L^2_x}\lesssim \frac{C(\mathscr M,\mathscr E)}{Kt},
\end{equation}
where we used $t\ge 1$ and $K\ge 1$. By unitarity of $U(t,s)$ on $L^2_x$, \eqref{eq:equator-commutator} and \eqref{eq:equator-comm-bd} yield
\begin{align}\label{eq:equator-L2}
\begin{split}
  \| F_{K}(|x|/t) u(t)\|_{L^2_x} \le & \| F_{K}(|x|/t) u(0)\|_{L^2_x}+\int_0^t\|[\Delta, F_{K}(|x|/t)]u(s)\|_{L^2_x}\,ds
  \\ \lesssim & \|u(0)\|_{L^2(|x|\ge Kt/2)}+\frac{C(\mathscr M,\mathscr E)}{K}.   
\end{split} 
\end{align}
Since $u(0)\in L^2$, the tail obeys $\|u(0)\|_{L^2(|x|\ge Kt/2)} \to 0$ as $K \to \infty$. 
Hence
\begin{equation}\label{eq:equator-L2-final}
  \delta_{K}:=\|F(\tfrac{|x|}{Kt})u(t)\|_{L^2_x} \to 0
\end{equation}
as $K \to \infty$.

\emph{Step 2: upgrade to $L^\infty$-smallness.}
By Proposition~\ref{prop:gradient-L2to6-bound} and Corollary~\ref{col: holdercontinue}, $u(t)$ is uniformly bounded in $L^\infty_x$ and uniformly H\"older continuous of order $\beta$: there is a constant $C_2>0$ such that
\begin{equation}\label{eq:equator-holder}
  \|u(t)\|_{L^\infty_x}+\sup_{x\ne y}\frac{|u(x,t)-u(y,t)|}{|x-y|^\beta}\le C_2,\qquad t\ge 1.
\end{equation}
Fix $t\ge1$ and take ${K}\ge 1$ large enough so that $\delta_K \leq 1$, set ${K}':=2{K}$, and let $a:=\|u(t)\|_{L^\infty_x(\{ |x| \geq K' t \})}$. Thus we can find $x_0(t)$ with $|u(x_0, t)| \geq a/2$ and $|x_0|\ge  {K}'t \ge {K}t+1$ (using $t\ge1$). We use the H\"older continuity estimate \eqref{eq:equator-holder} to show that $a$ is controlled by a power of $\delta_K$.  If $a=0$ there is nothing to prove; otherwise set $r:=(a/(4C_2))^{1/\beta}$.
Consider two cases.

$\bullet$ If $r>1$, then $a>4C_2$, and by \eqref{eq:equator-holder} we have $|u(y,t)|\ge a/2-C_2>a/4$ for all $y\in B(x_0,1)$. Since $|x_0|\ge {K}t+1$, we have $B(x_0,1)\subset\{|x|\ge {K}t\}=\{F(|x|/({K}t))=1\}$, so
\[
  \delta_{K}^2\ge\int_{B(x_0,1)}|u(y,t)|^2\,dy\gtrsim a^2 ,
\]
giving $a\lesssim\delta_{K}\le\delta_{K}^{\theta}$ (as $\delta_{K}\le1$ and $\theta\le1$), which proves the claim.

$\bullet$ If $r\le1$, then for $y\in B(x_0,r)$, \eqref{eq:equator-holder} gives $|u(y,t)|\ge a/2-C_2 r^\beta=a/4$. Since $r\le1$ and $|x_0|\ge {K}t+1$, again $B(x_0,r)\subset\{F(|x|/({K}t))=1\}$, so
\begin{equation}\label{eq:equator-ball}
  \delta_{K}^2\ge\int_{B(x_0,r)}|u(y,t)|^2\,dy\ge\frac{a^2}{16}|B(x_0,r)|\gtrsim a^2 r^3\gtrsim C_2^{-3/\beta}a^{2+3/\beta},
\end{equation}
whence $a\lesssim_{C_2}\delta_{K}^{2\beta/(2\beta+3)}=\delta_{K}^{\theta}$ with $\theta$ as in \eqref{eq:equator}, proving the claim.

Combining the claim with \eqref{eq:equator-L2-final}, we have
\begin{equation}\label{eq:equator-Linfty-prime}
  \|u(t)\|_{L^\infty_x(\{ |x| \geq 2Kt \})} \to 0
\end{equation}
for all $t \geq 1$ and $K$ sufficiently large. 
Relabelling $2{K}\mapsto {K}$  turns \eqref{eq:equator-Linfty-prime} into \eqref{eq:equator}. 
\end{proof}

\section{The multi-bubble case}\label{sec:multibubble}
In this section, we treat the case of $N \geq 2$ bubbles in three space dimensions. Thus we have $N$ distinct velocities $v_1, \dots v_N$. Each bubble is a conic set 
$$
\{ (x, t) \mid |x - v_j t| \leq \epsilon t \} \cap \{ t\geq T\}, \quad T \gg 1. 
$$
We will take $\epsilon$ sufficiently small so that the constructions below can be carried out. Roughly speaking, we need the bubbles to be `thin' relative to their separation. 

We will show scattering on the exterior of slightly larger cones 
$$
\{ (x, t) \mid |x - v_j t| \leq 2\epsilon t \} \cap \{ t\geq T\}, \quad T \gg 1 .
$$
We proceed analogously to the one-bubble case, but there is a new difficulty: the non-resonant estimate, that is, the multi-bubble analogue of \eqref{eq:3d-outer-mid-vanishes-1}, does not work as in the previous section. We need a sharper spatial cutoff, which breaks the dilation invariance that our previous cutoffs enjoyed. We begin by proving the required propagation estimate, as well as a functional calculus lemma, the first part of which is well-known, but included for completeness. 

\subsection{Propagation estimate}
\label{sssec:3d-propagation}
Next we prove our propagation estimate. 
For $e\in S^2$, $c\in\R$, and $\sigma\in\{1,-1\}$, set
\begin{align}
b=\frac{\sigma(x\cdot e-ct)}{t^\alpha}, \qquad D_c=D\cdot e-\frac c2, \qquad \frac12<\alpha<1. \label{eq:3d-shifted-variables}
\end{align}

\begin{proposition}[Propagation estimate]
\label{prop:3d-propagation}
Suppose that the support of $F'(b)$ is contained in the region where the Strichartz estimate \eqref{eq:input-L2-2} is valid.  Then
\begin{align}
\int_T^\infty\frac1{t^\alpha}\|\sqrt{F'(b)} D_cu(t)\|_{L^2_x}^2\,dt<\infty. 
\label{eq:3d-propagation-integral}
\end{align}
If $w(t)=e^{-i(t-t_0)\Delta}\phi$ and $D_c\phi\in L^2_x$, then
\begin{align}
\int_T^\infty\frac1{t^\alpha}\|\sqrt{F'(b)} D_c w(t)\|_{L^2_x}^2\,dt\lesssim\|\phi\|_{L^2_x}^2+\|\phi\|_{L^2_x}\|D_c\phi\|_{L^2_x}. 
\label{eq:3d-free-propagation-integral}
\end{align}
\end{proposition}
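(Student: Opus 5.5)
The plan is a positive-commutator (Morawetz-type) argument with the time-dependent observable
\[
A(t)=\tfrac12\big(D_c\,F(b)+F(b)\,D_c\big),
\]
with $b$ and $D_c$ as in \eqref{eq:3d-shifted-variables}. The sign $\sigma$ will be chosen so that the leading term in the Heisenberg derivative has a good sign. The equation \eqref{eq:nls} reads $i\partial_t u=(\Delta+\mc N(|u|))u$. So for $\Phi(t):=\langle u(t),A(t)u(t)\rangle$ we have
\[
\Phi'(t)=\big\langle u,\big(\partial_tA+i[\Delta,A]\big)u\big\rangle+\big\langle u,i[\mc N(|u|),A]u\big\rangle .
\]
The first step is the a priori bound $|\Phi(t)|\le\|u\|_{L^2}\|D_cu\|_{L^2}\lesssim 1$, uniformly in $t$, by Proposition~\ref{prop:N basic properties}(i). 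Integrating $\Phi'$ over $[T,T_1]$ then controls the time integral of the positive main term by boundary terms plus the error terms below. The point is to show all error terms are integrable in time.

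\emph{Free part.} Since $D_c$ commutes with $\Delta$ and with $\partial_t$, it suffices to compute the Heisenberg derivative of $F(b)$. Using $\partial_{x}F(b)=\sigma e\,t^{-\alpha}F'(b)$, we get $i[\Delta,F(b)]=\sigma t^{-\alpha}(D^\parallel F'+F'D^\parallel)$. Combining this with $\partial_tF(b)=-\sigma c\,t^{-\alpha}F'(b)-\alpha t^{-1}bF'(b)$ gives
\[
\partial_tF(b)+i[\Delta,F(b)]=\sigma t^{-\alpha}\big(D_cF'(b)+F'(b)D_c\big)-\alpha t^{-1}bF'(b).
\]
Here the shift $-c/2$ in $D_c$ exactly absorbs the $c$-term. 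Symmetrizing with $D_c$ then yields:
\begin{itemize}
\item the main term $2\sigma t^{-\alpha}\,D_cF'(b)D_c$, whose expectation is $2\sigma t^{-\alpha}\|\sqrt{F'(b)}D_cu\|_{L^2}^2$;
\item double commutator terms $[D_c,[D_c,F'(b)]]=O(t^{-2\alpha})$ times $t^{-\alpha}$, which are bounded and integrable since $3\alpha>1$;
\item the term $-\tfrac{\alpha}{2t}\big(D_c\,bF'+bF'D_c\big)$.
\end{itemize}
The last term is the dangerous one, since a crude bound gives only $t^{-1}$ and hence a logarithm. Since $b$ is bounded on $\supp F'$ and $\sqrt{F'}$ is smooth, I would write it, up to a commutator of size $t^{-1-\alpha}$, as $t^{-1}\langle\sqrt{F'}D_cu,\,b\sqrt{F'}u\rangle$. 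Cauchy--Schwarz then gives the bound
\[
\varepsilon\,t^{-\alpha}\|\sqrt{F'}D_cu\|^2+C_\varepsilon t^{\alpha-2}\|u\|_{L^2}^2 ,
\]
and the first piece is absorbed into the main term while $t^{\alpha-2}$ is integrable because $\alpha<1$. This settles \eqref{eq:3d-free-propagation-integral} directly: with $\mc N=0$, the boundary terms are at most $\|\phi\|\|D_c\phi\|$ and the $t^{\alpha-2}$ term contributes $\|\phi\|^2$.

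\emph{Nonlinear part.} This is the step I expect to be the main obstacle. The commutator $[\mc N,A]$ is supported on $\supp F$, a half-space region where no smallness is assumed, rather than on $\supp F'$. Using that $\mc N$ is real and commutes with $F(b)$, we get $i[\mc N,A]=F(b)\,\partial_e\mc N(|u|)$ with $\partial_e=e\cdot\nabla$, so the contribution is $\int F(b)\,\partial_e\big(\mc N(|u|)\big)|u|^2\,dx$. I would write $|u|^2\partial_e\mc N(|u|)=\partial_e P(|u|)$ with $P(a)=\int_0^a s^2\mc N'(s)\,ds$ and integrate by parts in $x$. This moves the derivative onto $F(b)$ and leaves
\[
-\sigma t^{-\alpha}\int F'(b)\,P(|u|)\,dx ,
\]
which is now supported where the Strichartz estimate \eqref{eq:input-L2-2} holds. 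By \eqref{eq:Ncondition 1} and Proposition~\ref{prop:Linfty}, $|P(|u|)|\lesssim|u|^4$. Hence, with a conic cutoff $\chi$ equal to $1$ on $\supp F'(b)$ (as allowed by the hypothesis),
\[
\int F'|P|\lesssim\|\chi|u|^2u\|_{L^{6/5}}\|u\|_{L^6},
\]
and $\|u\|_{L^6}$ is bounded uniformly in time. The cubic quantity $\|\chi|u|^2u\|_{L^{6/5}}$ is in $L^2_t$: this is what the proof of Proposition~\ref{prop:u exterior decay} actually establishes (cf.\ \eqref{eq:one-bubble-exterior-source}), after factoring out the bounded factor as there. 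Meanwhile $t^{-\alpha}\in L^2_t([T,\infty))$ precisely because $\alpha>\tfrac12$, so Cauchy--Schwarz in $t$ makes this term integrable.

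Collecting the boundary terms, the absorbed $\varepsilon$-term and the integrable errors, and letting $T_1\to\infty$, gives \eqref{eq:3d-propagation-integral}.
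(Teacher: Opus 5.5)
Your proposal is correct and follows the paper's proof essentially step for step. It uses the same symmetrized observable $\tfrac12(F(b)D_c+D_cF(b))$; the paper builds the factor $\sigma$ into the observable, which is all your remark about ``choosing'' the sign amounts to, since $\sigma$ is fixed by $b$. The rest also matches: the Heisenberg derivative in which the $-c/2$ shift absorbs the $\partial_t b$ term, the $O(t^{-3\alpha})$ double commutator, the Cauchy--Schwarz absorption of the $\alpha bF'(b)/t$ term leaving an integrable $t^{\alpha-2}\|u\|_{L^2}^2$, and the integration by parts reducing the nonlinear term to $t^{-\alpha}\int F'(b)\,O(|u|^4)\,dx$, controlled by the $L^2_t$ cubic Strichartz bound and $t^{-\alpha}\in L^2_t$ for $\alpha>\tfrac12$.

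Beyond the paper, you also write out the free estimate \eqref{eq:3d-free-propagation-integral} as the case $\mc N=0$, which the paper leaves implicit. The commutator you insert before Cauchy--Schwarz is unnecessary, since $D_c$ is symmetric, but it is harmless.
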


\begin{remark}\label{rem:sharp cutoff}
It might look surprising at first that we take $\alpha < 1$ instead of $\alpha = 1$, which might look more in the spirit of `conic geometry'. Indeed, $t^{-1}$ would decay faster than $t^{-\alpha}$ for $\alpha < 1$ in \eqref{eq:3d-propagation-integral}, so this looks at first sight to be a more promising candidate for such an estimate. 

However, a moment's thought shows that, even if $u = e^{-it \Delta} w$ is a free solution, then \eqref{eq:3d-propagation-integral} would be infinite for $\alpha = 1$. Indeed, the support of $F'$ would then be a cone, and the typical free solution would have asymptotically nonzero mass in this cone, so the decaying factor $t^{-1}$ would not be sufficient to make the integral converge as $t \to \infty$. 

On the other hand, for $\alpha < 1$, the support of $F'$ is a thin sliver of width $O(t^{\alpha - 1})$ in terms of the conic variable $x/t$, so the mass of a free solution could be expected to decay as $t^{\alpha - 1}$. Moreover, this sliver is centred at $x \cdot e/t = c$, which corresponds, using the dispersion relation for the free equation, to frequencies satisfying $\xi \cdot e = c/2$. The symbol of $D_c$ vanishes on such frequencies, and therefore (taking into account the width of the support of $F'$) provides an \emph{additional} $O(t^{\alpha - 1})$ vanishing factor as $t \to \infty$. Thus, in changing from $t^{-1}$ to $t^{-\alpha}$, we lose one factor of $t^{1-\alpha}$ but we gain $t^{\alpha - 1}$ twice, so overall gaining a decaying power of $t$, leading to integrability of \eqref{eq:3d-propagation-integral} as $t \to \infty$.  
\end{remark}

\begin{proof}
For convenience, we introduce the notation 
\begin{equation}
A \odot B = AB+BA.
\end{equation}
This produces a new symmetric operator from two symmetric operators $A$, $B$. 
Define the $L^2$-pairing 
\begin{align}
a_{c,\sigma}(t)=\frac\sigma2\langle u(t),(F(b)\odot D_c) u(t)\rangle \label{eq:3d-propagation-observable}
\end{align}
where $b$ is as in \eqref{eq:3d-shifted-variables}. We will prove \eqref{eq:3d-propagation-integral} via a propagation estimate. First, 
mass conservation and the uniform $H^1_x$ bound give
\begin{align}
\sup_{t\geq T}|a_{c,\sigma}(t)|\lesssim1. \label{eq:3d-observable-bounded}
\end{align}
We will calculate the time derivative of \eqref{eq:3d-propagation-observable} and show that it is the sum of a positive quantity, and an integrable quantity. The positive quantity will be the integrand of \eqref{eq:3d-propagation-integral}, and the uniform boundedness of the pairing shows that this positive term is integrable in time. 

We compute the time derivative of \eqref{eq:3d-propagation-observable}:
\begin{equation}\begin{aligned}
\frac{d}{dt} a_{c,\sigma}(t) &= \frac{\sigma}{2} \bang{u(t), (\frac{d}{dt}F(b)\odot D_c) u(t)} \\
&+  \frac{\sigma}{2} \bang{u(t), i[\Delta, (F(b)\odot D_c)] u(t)} \\
 - \frac{i\sigma}{2} & \Big(\bang{\mc{N}(|u(t)|) u(t), (F(b)\odot D_c) u } 
 - \bang{u(t), (F(b)\odot D_c) \big( \mc{N}(|u(t)|) u(t) \big)} \Big) .
\end{aligned}\label{eq:derivative expanded}
\end{equation}

We have, since $D_c$ is time-independent, 
\begin{equation}\label{eq:F'(b)}
\frac{d}{dt} \big( F(b) \odot D_c \big)  = \Big( F'(b) \big( - \frac{\sigma c}{t^\alpha} - \frac{\alpha b}{t} \big) \odot D_c \Big). 
\end{equation}
Since $b$ is bounded on the support of $F'$, the second term in the first factor on the right is smaller, in a sense, as $t \to \infty$ compared to the first. We shall see that the first term combines with the commutator term to give the positive operator in the integrand of \eqref{eq:3d-propagation-integral}, and the second term can be absorbed by this positive term. 

For notational convenience, we assume that $e = (1, 0, 0)$, thus $D_c = D_{x_1} - c/2$. 
The commutator $i[\Delta, (F(b)\odot D_c)]$ is (noting that $D_c$ commutes with $\Delta$)
\begin{equation}
\Big( D_{x_1} \odot F'(b) \frac{\sigma}{t^\alpha} \Big) \odot D_c.      
\end{equation} 
If we combine this with the \emph{first} term (only) of the right hand side of \eqref{eq:F'(b)}, then we obtain 
\begin{equation}
\frac{\sigma}{t^\alpha} \Big( D_c \odot F'(b) \Big) \odot D_c.      
\end{equation} 
A short calculation shows that 
\begin{equation}
 \Big( D_c \odot F'(b) \Big) \odot D_c = 4 D_c F'(b) D_c + [[F'(b), D_c], D_c] = 4 D_c F'(b) D_c 
 - \frac{1}{t^{2\alpha}} F'''(b) . 
\end{equation}
Thus the first two lines of the right hand side of \eqref{eq:derivative expanded} give 
\begin{equation}\label{eq:first two lines}
\frac2{t^\alpha}\|\sqrt{F'(b)}D_cu(t)\|_{L^2_x}^2-\frac1{2t^{3\alpha}}\langle u,F'''(b)u\rangle 
-\frac{\alpha\sigma}{2t}\langle u,(bF'(b)\odot D_c)u\rangle.
\end{equation}

The third line we can simplify to 
\begin{equation}
-i\sigma \bang{u(t), F(b) D_{x_1}(\mc{N}(|u(t)|) u(t)}   = -i\sigma \bang{F(b), D_{x_1}(\mc{N}(|u(t)|) |u(t)|^2}. 
\end{equation}
The right hand side is the $D_{x_1}$-derivative of $\tilde{\mc{N}}(|u|)$, for suitable $\tilde N(|u|)$ which is $O(|u|^4)$ (using the uniform $L^\infty$ bound from Proposition~\ref{prop:Linfty}), so we can integrate by parts in $x_1$. This allows us to bound the time-integral of this term by 
\begin{align}
\int_T^\infty\frac1{t^\alpha}\int_{\R^3}F'(b)|u|^4\,dx\,dt\lesssim\|t^{-\alpha}\|_{L^2_t(T,\infty)}\|u\|_{L^\infty_tL^6_x}\|\chi_1 |u|^2u\|_{L^2_tL^{6/5}_x}<\infty \label{eq:3d-nonlinear-remainder-integrable}
\end{align}
(where $\chi_1$ is as in \eqref{eq:input-L2-2}) since $F'(b)$ is supported in the region where we have the Strichartz estimate \eqref{eq:input-L2-2}. 

For $\alpha > 1/3$, the second term of \eqref{eq:first two lines} is also integrable in $t$. We have therefore shown that 
\begin{equation}\label{eq:ac3}
\frac{d}{dt} a_{c,\sigma}(t) = \frac2{t^\alpha}\|\sqrt{F'(b)}D_cu(t)\|_{L^2_x}^2
-\frac{\alpha\sigma}{2t}\langle u,(bF'(b)\odot D_c)u\rangle + g(t),
\end{equation}
where $g$ is integrable in time. Next we note that the second term on the right can be absorbed by the first. Noting that $|b| \leq 1$ on the support of $F'(b)$, we apply Cauchy-Schwarz followed by $a_1 a_2 \leq \eta a_1^2 + \eta^{-1} a_2^2$ to deduce 
\begin{align}
\frac1t|\langle u,(bF'(b)\odot D_c)u\rangle|\leq\frac1{t^\alpha}\|\sqrt{F'(b)}D_cu\|_{L^2_x}^2 + \| F' \|_{L^\infty} t^{\alpha-2}\|u\|_{L^2_x}^2. \label{eq:3d-time-cutoff-absorption}
\end{align}
The first term can be absorbed by the positive term in \eqref{eq:ac3}, and the second is integrable in time as $\alpha < 1$, so this can be absorbed into the $g(t)$ term. We deduce that 
\begin{equation} 
\frac{d}{dt} a_{c,\sigma}(t) \geq \frac1{t^\alpha}\|\sqrt{F'(b)}D_cu(t)\|_{L^2_x}^2 + \tilde g(t),
\end{equation}
where $\tilde g$ is integrable. As $a_{c,\sigma}(t)$ is uniformly bounded, this implies the integrability in time of \eqref{eq:3d-propagation-integral}, completing the proof. 
\end{proof}

We will also have need of the following result. The first, \eqref{eq:asymptotic-velocity}, is well-known, while the second adapts this to our non-conic cutoffs. 

\begin{lemma} 
\label{lem:asymptotic-velocity}
If $a\in C^\infty(\R^3) \cap L^\infty(\R^3)$ and $f\in L^2_x$, then
\begin{align}
\|a(x/t)e^{-it\Delta}f-e^{-it\Delta}a(2D)f\|_{L^2_x}\longrightarrow0 \text{ as } t \to \infty. \label{eq:asymptotic-velocity}
\end{align}
Let $e\in S^2$, $c\in\R$, $\sigma\in\{1,-1\}$, and $0<\alpha<1$. Then
\begin{align}
\mathrm{s}\text{-}\lim_{t\to\infty}e^{it\Delta}F(\frac{\sigma(x\cdot e-ct)}{t^\alpha})e^{-it\Delta}=\mathbf 1_{\{\sigma(2D\cdot e-c)>0\}}. \label{eq:subconic-asymptotic-velocity}
\end{align}
\end{lemma}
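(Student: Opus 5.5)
Both statements reduce, by unitarity of $e^{-it\Delta}$ and density, to Schwartz data with compactly supported Fourier transform; the second also needs a separate argument near the boundary hyperplane $\{2\xi\cdot e = c\}$.

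\emph{Step 1: the conic statement \eqref{eq:asymptotic-velocity}.} The operators $a(x/t)e^{-it\Delta}$ and $e^{-it\Delta}a(2D)$ are uniformly bounded on $L^2$, with norm $\le \|a\|_{L^\infty}$. It therefore suffices to prove the limit for $f\in\Schw$ with $\hat f\in C_c^\infty$. For such $f$, the stationary phase expansion \eqref{eq:spe} gives
\[
e^{-it\Delta}f=(4\pi i t)^{-3/2}e^{i|x|^2/4t}\hat f\big(\tfrac{x}{2t}\big)+r(t),\qquad \|r(t)\|_{L^2}\to 0 .
\]
Applying the same expansion to $g=a(2D)f$, whose Fourier transform is $a(2\xi)\hat f(\xi)$ and is still smooth and compactly supported, gives leading term $(4\pi i t)^{-3/2}e^{i|x|^2/4t}a(x/t)\hat f(x/2t)$. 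Multiplying the first expansion by $a(x/t)$ gives exactly the same leading term. The difference is therefore $o_{L^2}(1)$. This proves \eqref{eq:asymptotic-velocity}.

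\emph{Step 2: the subconic statement \eqref{eq:subconic-asymptotic-velocity}.} Write $P_t=e^{it\Delta}F(b)e^{-it\Delta}$, where $b=\sigma(x\cdot e-ct)/t^\alpha$. Since $0\le F\le 1$, we have $\|P_t\|\le1$, so it again suffices to test on a dense set. Take $f$ with $\hat f\in C_c^\infty$ and split $f=f_++f_-+f_0$. Here $\hat f_\pm$ is supported where $\pm\sigma(2\xi\cdot e-c)\ge\mu$ for some $\mu>0$, and $\|f_0\|_{L^2}$ is as small as we like. This is possible because the hyperplane $\{2\xi\cdot e=c\}$ has measure zero, so $\hat f$ restricted to a $\mu$-neighbourhood of it has small $L^2$ norm.

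\emph{Step 3: the pieces $f_\pm$.} By the stationary phase expansion, $e^{-it\Delta}f_+$ is concentrated, up to $o_{L^2}(1)$, on $\{x/(2t)\in\supp\hat f_+\}$. On that set $\sigma(x\cdot e-ct)=\sigma t(2\tfrac{x}{2t}\cdot e-c)\ge\mu t$, so $b\ge \mu t^{1-\alpha}\to\infty$ and $F(b)=1$ for large $t$. Hence $P_tf_+-f_+\to0$. Symmetrically, on the concentration set of $f_-$ we have $b\le-\mu t^{1-\alpha}$, so $F(b)=0$ and $P_tf_-\to0$. Since $\mathbf 1_{\{\sigma(2D\cdot e-c)>0\}}$ fixes $f_+$ and kills $f_-$, both pieces agree with the claimed limit. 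The $f_0$ piece contributes at most $2\|f_0\|_{L^2}$ to the error.

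The main obstacle is making the concentration claim in Step 3 quantitative. The region where $F(b)$ is not locally constant is only a slab of width $t^\alpha$ in $x$, and this must be compared against the $O(t^{-5/2})$ error term in \eqref{eq:spe}. The cleanest route avoids this comparison: on $\{x/(2t)\in\supp\hat f_+\}$ the function $1-F(b)$ vanishes identically for large $t$. The leading stationary-phase term is therefore annihilated exactly by $1-F(b)$, and the remainder already tends to zero in $L^2$ uniformly. Letting $\mu\to0$ then concludes the proof.
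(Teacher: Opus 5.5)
Your proof is correct.

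\textbf{The subconic statement.} For \eqref{eq:subconic-asymptotic-velocity} your argument is essentially the paper's. Both remove a small frequency neighbourhood of $\{2\xi\cdot e=c\}$ and use that, on either side of it, the free evolution concentrates where $|\sigma(x\cdot e-ct)|\gtrsim t\gg t^\alpha$. The paper obtains this concentration from \eqref{eq:asymptotic-velocity}, applied with an auxiliary smooth $a$ equal to $1$ near $2\supp\hat f$. You read it off directly from the support of the stationary-phase leading term. Your resolution of the ``main obstacle'' is sound: for $\mu t^{1-\alpha}\ge 1$, the factor $1-F(b)$ annihilates the leading term exactly. You should split $\hat f$ with smooth cutoffs so that $f_\pm$ remain Schwartz and \eqref{eq:spe} applies, or use the remark below.

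\textbf{The conic statement.} For \eqref{eq:asymptotic-velocity} your route is genuinely different and more economical. The paper approximates $a$ by a polynomial $p$ on $B(0,2R)$. It then uses $e^{it\Delta}xe^{-it\Delta}=x+2tD$ to get $e^{it\Delta}p(x/t)e^{-it\Delta}=p(x/t+2D)$, and still has to invoke \eqref{eq:spe} to confine $x/t$ to the region where $p\approx a$. You skip the polynomial and commutator steps entirely and compare leading terms via the exact identity $\widehat{a(2D)f}(x/2t)=a(x/t)\hat f(x/2t)$.

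This is the Dollard factorization $e^{-it\Delta}f=(4\pi it)^{-3/2}e^{i|x|^2/4t}\,\widehat{(e^{i|\cdot|^2/4t}f)}(x/2t)$ in disguise. Since multiplication by $e^{i|y|^2/4t}$ tends strongly to the identity, your remainders tend to zero for every $f\in L^2$. So for this part the density reduction is unnecessary, and the statement holds for arbitrary $a\in L^\infty$, with no smoothness or even continuity required. The paper's polynomial approximation, by contrast, genuinely uses continuity of $a$ on compact sets.
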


\begin{proof}
To prove \eqref{eq:asymptotic-velocity}, we first observe that it suffices to do for a dense subset of $L^2$, as both operators $a(x/t) e^{-it\Delta}$ and $e^{-it\Delta} a(2D)$ are uniformly bounded by $\| a \|_{L^\infty}$. 
We first prove for $f$ such that $\hat f \in C_c^\infty(\R^3)$. 

Let $f$ be such a function, with Fourier transform supported in $\{ |\xi| \leq R \}$. Let $\epsilon > 0$ be given. We can approximate the function $a$ on $B(0, 2R)$ by a polynomial $p(\xi)$, up to a uniform error $\epsilon$. Starting from the identity 
\begin{equation}\label{eq:xD identity}
e^{it\Delta}xe^{-it\Delta}=x+2tD,
\end{equation}
we divide through by $t$ and iterate to find 
\begin{equation}\label{eq:xD identity 2}
e^{it\Delta}p\big( \frac{x}{t} \big) e^{-it\Delta}= p\big( \frac{x}{t} +2D \big).
\end{equation}
Applying the right hand side to $f$ gives 
\begin{equation}
  p\big( \frac{x}{t} +2D \big)f = p(2D) f + O_{L^2}(1/t),  
\end{equation}
since $f$ is Schwartz, hence remains in $L^2$ after multiplication by any monomial in $x$. 
Applying $e^{-it\Delta}$ on the left we find that 
\begin{equation}
p\big( \frac{x}{t} \big) e^{-it\Delta}f = e^{-it\Delta} p(2D) f + O_{L^2}(1/t)  = e^{-it\Delta} a(2D) f  + O(\epsilon) + O_{L^2}(1/t).
\end{equation}
We then apply the asymptotic \eqref{eq:spe} to the left hand side, and we see that the error term in \eqref{eq:spe} contributes $O(1/t)$ in $L^2$ norm, so we may replace $e^{-it\Delta}f$ by its leading asymptotic in \eqref{eq:spe}. In that case, the polynomial $p(x/t)$ again approximates $a(x/t)$ up to a uniform error $\epsilon$. 
We therefore estimate the norm in \eqref{eq:asymptotic-velocity} by $O(\epsilon) + O(1/t)$, which suffices to prove the limiting statement \eqref{eq:asymptotic-velocity}.


To prove \eqref{eq:subconic-asymptotic-velocity}, first take $f$ with $\hat{f}\in C_c^\infty$ as above, and whose support has positive distance from the hyperplane $2\xi\cdot e=c$. On the part where $\sigma(2\xi\cdot e-c)>0$, choose $a\in C^\infty(\R^3)$ which equals one near $2\supp\widehat f$ and is supported where $\sigma(v\cdot e-c)>0$. By \eqref{eq:asymptotic-velocity}, the free solution is asymptotically supported where $a(x/t)=1$, and on this set $\sigma(x\cdot e-ct)\gtrsim t$. Since $\alpha<1$, the cutoff in \eqref{eq:subconic-asymptotic-velocity} is eventually one. The same argument on the part where $\sigma(2\xi\cdot e-c)<0$ shows that the cutoff is asymptotically zero. The result for general $f\in L^2_x$ follows by first removing a small frequency neighbourhood of $2\xi\cdot e=c$ and then using density, since that hyperplane has measure zero.
\end{proof}

\subsection{Scattering in longitudinal slabs}

The first step is to choose a direction $e \in S^2$ such that each velocity $v_j$ has a distinct component in the $e$ direction; we will call such a direction \emph{suitable}. We will write this component $v_j^\parallel$. Similarly we will write $x^\parallel$, $\xi^\parallel$ for the components of $x$ and $\xi$ in this direction. We will assume that $v_1^\parallel < v_2^\parallel < \dots < v_N^\parallel$. We will also choose constants $\epsilon_j$, $j = 1 \dots 4$, such that $\epsilon = \epsilon_0 < \epsilon_1 < \epsilon_2 < \epsilon_3 < \epsilon_4 < (2/\sqrt{3}) \epsilon$, similar to the previous section, and take $\delta_0$ in \eqref{eq:F-convention} small compared to $\epsilon_{j+1}/\epsilon_j - 1$, $j = 0 \dots 3$.

Similar to the previous section, we choose a conic cutoff $\chi$, equal to $0$ on the cones $\{ |x - v_j t| \leq \epsilon_0 t \}$, and $1$ outside the slightly larger cones $\{ |x - v_j t| \leq \epsilon_1 t \}$. 
Using only the component $x^\parallel$ and $t$, we define conic cutoffs $\Xi_0, \dots, \Xi_N$ supported `between' the bubbles. Here, $\Xi_0$ is supported to the left (in terms of the coordinate $x^\parallel$) of all bubbles, $\Xi_j$ is supported between the $j$th and $(j+1)$st bubble (again in terms of the coordinate $x^\parallel$), and $\Xi_N$ is supported to the right of the $N$th bubble. More precisely, we define 
\begin{equation}\begin{gathered}
\Xi_0 = F \Big( -\frac{x^\parallel - v_1^\parallel t }{\epsilon_4 t} \Big), \\
\Xi_{j} = F \Big( \frac{x^\parallel - v_j^\parallel t}{\epsilon_4 t}  \Big) F \Big( -\frac{x^\parallel - v_{j+1}^\parallel t}{\epsilon_4 t} \Big), \quad j = 1 \dots N-1,  \\
\Xi_N = F \Big( \frac{x^\parallel - v_N^\parallel t }{\epsilon_4 t} \Big).
\end{gathered}\end{equation}

For each cutoff $\Xi_j$, we define a splitting of longitudinal frequency $\xi^\parallel$ into resonant and nonresonant regions. We define 
\begin{equation}\begin{gathered}
Q_{\res, 0} = F \Big( - \frac{2D^\parallel - v_1^\parallel}{\epsilon_2} \Big), \\
Q_{\res, j} = F \Big(  \frac{2D^\parallel - v_j^\parallel}{\epsilon_2} \Big)F \Big( - \frac{2D^\parallel - v_{j+1}^\parallel}{\epsilon_2} \Big), \quad j = 1 \dots N-1, \\
Q_{\res, N} = F \Big(  \frac{2D^\parallel - v_N^\parallel}{\epsilon_2} \Big), \\
Q_{\nr, j} = \Id - Q_{\res, j}. 
\end{gathered}\end{equation}

Then we prove the analogue of Proposition~\ref{prop:one-bubble-directional-scattering}:

\begin{proposition}
\label{prop:multi-bubble-directional-scattering}
We have scattering on each spacetime cone $\{ \Xi_j = 1 \}$:
\begin{align}
\|\Xi_j(t)(u(t)-e^{-it\Delta}u_+)\|_{L^2_x}\longrightarrow0. \label{eq:multi-bubble-directional-scattering}
\end{align}
Moreover, the following statements hold, for $j = 0 \dots N$, which
 collectively imply \eqref{eq:multi-bubble-directional-scattering}. 
\begin{align}
  \|F(\frac{|D^\parallel|}{\sqrt t})u(t)\|_{L^2_x}+\|F(\frac{|D^\parallel|}{\sqrt t})e^{-it\Delta}u_+\|_{L^2_x}&\longrightarrow 0. \label{eq:IIb-vanish-1-multi} \\
 \|\Xi_j(t)Q_{\res, j}\overline{F}(\frac{|D^\parallel|}{\sqrt t})(u(t)-e^{-it\Delta}u_+)\|_{L^2_x}&\longrightarrow0. \label{eq:multi-bubble-resonant-scattering-1} \\
 \|\Xi_j(t)Q_{\nr, j}\overline{F}(\frac{|D^\parallel|}{\sqrt t}) u(t)\|_{L^2_x} + \|\Xi_j(t)Q_{\nr, j}\overline{F}(\frac{|D^\parallel|}{\sqrt t})e^{-it\Delta} u_+ \|_{L^2_x}&\longrightarrow0. \label{eq:3d-outer-mid-vanishes-1-multi}
\end{align}
\end{proposition}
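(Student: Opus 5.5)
The proof follows the three-part scheme of Proposition~\ref{prop:one-bubble-directional-scattering}. First, the three statements imply \eqref{eq:multi-bubble-directional-scattering}: we insert $\Id = F(|D^\parallel|/\sqrt t) + \overline F(|D^\parallel|/\sqrt t)$ and $\Id = Q_{\res,j}+Q_{\nr,j}$, and use that $\Xi_j(t)$ is uniformly bounded. Statement \eqref{eq:IIb-vanish-1-multi} is identical to \eqref{eq:IIb-vanish}. It follows from the uniform $H^1$ bound on $u(t)$ and from $u_+\in H^1$ (Lemma~\ref{lem:weak-limit-u-plus}), which together give an $O(t^{-1/2})$ bound.

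For the resonant statement \eqref{eq:multi-bubble-resonant-scattering-1}, we apply Lemma~\ref{lem:tail estimate} with $A(t)=\Xi_j(t)Q_{\res,j}\overline F(|D^\parallel|/\sqrt t)$. We split the source as $\chi\mc N(|u|)u + (1-\chi)\mc N(|u|)u$. The conic cutoff $\chi$ has $1-\chi$ of compact cross-section, so Proposition~\ref{prop:u exterior decay}, case (ii), applies. The $\chi$-part is then handled by the adjoint endpoint Strichartz estimate exactly as in \eqref{eq:Pplus-ext}. The $(1-\chi)$-part is supported in the union of the bubbles $|y-v_ks|\le\epsilon_1 s$. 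For each bubble $k$ we run the longitudinal non-stationary phase argument with $s\ge t$. On $\supp\Xi_j$ we have $x^\parallel\in[v_j^\parallel t+(1-\delta_0)\epsilon_4 t,\ v_{j+1}^\parallel t-(1-\delta_0)\epsilon_4 t]$, while $2\xi^\parallel$ lies in the analogous window with $\epsilon_2$ in place of $\epsilon_4$, and $y^\parallel\in[v_k^\parallel s-\epsilon_1 s, v_k^\parallel s+\epsilon_1 s]$. The stationary point would require $x^\parallel = y^\parallel + 2(t-s)\xi^\parallel$, that is, a backward trajectory from $(y,s)$ with velocity $2\xi^\parallel$ reaching $(x,t)$. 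Write $x^\parallel = \lambda t$ with $\lambda$ in the slab window, and $y^\parallel=\mu s$. Then $x^\parallel$ is a convex combination of $\mu$ and $2\xi^\parallel$ weighted by $s/t$ and $(t-s)/t$, and both $\mu$ and $2\xi^\parallel$ lie further from the interior of the slab window than $\lambda$ does. More precisely, if $k\le j$ then $\mu\le v_j^\parallel+\epsilon_1$, while $2\xi^\parallel\ge v_j^\parallel + (1-\delta_0)\epsilon_2$. I will check carefully, using the one-sided half-space observation of Remark~\ref{rem:half space}, that $|\partial_{\xi^\parallel}\Phi^\parallel|\gtrsim (t+s)+|x^\parallel-y^\parallel|$. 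Integration by parts then gives rapid decay of the kernel, uniformly in $y_\perp$, and \eqref{eq:N-Linfty-L2} closes the argument. Because only the inequality in one direction matters for each $k$, the multi-bubble geometry causes no trouble in the resonant part.

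The nonresonant statement \eqref{eq:3d-outer-mid-vanishes-1-multi} is where I expect the main obstacle, as the paper announces. We write Duhamel from $0$ to $t$. The free part and the $\chi$-part reduce, via Strichartz, to $e^{-it\Delta}w$ with $w\in L^2$. These are killed by Lemma~\ref{lem:asymptotic-velocity}, since $Q_{\nr,j}$ vanishes on velocities $2\xi^\parallel$ in the slab window where $\Xi_j$ lives. The difficulty is the bubble part with $s<t$. Nonresonant frequencies now include $2\xi^\parallel$ near $v_k^\parallel$ for every $k$, for example frequencies just beyond $v_{j+1}^\parallel$. A trajectory from bubble $k\le j$ moving with velocity $2\xi^\parallel>v_{j+1}^\parallel$ does pass through the slab $\Xi_j$ at intermediate times, so the phase is stationary there. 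To handle this, I would decompose $Q_{\nr,j}$ more finely into pieces localized near each $v_k^\parallel$ (widths $\epsilon_2$ versus $\epsilon_3$), and in addition insert the non-conic cutoffs $F(\sigma(x^\parallel-c\,t)/t^\alpha)$ with $c$ chosen at the slab edges $v_j^\parallel+\epsilon_3$ and $v_{j+1}^\parallel-\epsilon_3$, together with their complements. The sharp cutoffs separate the spatial region from the frequency region up to a $t^{\alpha}$-scale transition layer. The layer is controlled by Proposition~\ref{prop:3d-propagation}: we write the commutator of the Heisenberg derivative through $\sqrt{F'(b)}D_c$, and the time integral of $t^{-\alpha}\|\sqrt{F'(b)}D_cu\|^2$ is finite. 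This yields, via a Cook/Kato-type argument, existence of the strong limit of $e^{it\Delta}F(b)Q u(t)$ and its identification through \eqref{eq:subconic-asymptotic-velocity}. On the complementary pieces, phase non-stationarity holds with one-sided spatial and frequency support, as in the one-bubble case. The delicate step is to verify that every piece of this finer phase-space partition is either non-stationary or covered by the propagation estimate, so that its contribution tends to zero.
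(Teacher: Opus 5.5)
Your reductions, the treatment of \eqref{eq:IIb-vanish-1-multi}, the resonant part, and the free and $\chi$-pieces of the nonresonant part agree with the paper. There is one small slip in the resonant geometry. For $s\ge t$, stationarity makes $\mu$ a convex combination of $\lambda$ and $2\xi^\parallel$ (weights $t/s$ and $(s-t)/s$), not the other way round. For $k\le j$ the contradiction is that $\lambda$ and $2\xi^\parallel$ both exceed $v_j^\parallel+\epsilon_1$ by a fixed margin, while $\mu\le v_j^\parallel+\epsilon_1$.

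The genuine gap is the nonresonant contribution of the bubbles. This is the only new content of the proposition, and you leave it as an unverified ``delicate step''. As you observe, in the Duhamel-from-$0$ representation this term has truly stationary phase: mass leaving bubble $k\le j$ with $2\xi^\parallel>v_{j+1}^\parallel$ really is inside $\supp\Xi_j$ at time $t$. No refinement of the partition of $Q_{\nr,j}$ or of space removes these trajectories. Moreover, Proposition~\ref{prop:3d-propagation} is a statement about $\sqrt{F'(b)}(D^\parallel-c/2)u(s)$, not about the kernel of $\int_0^t e^{-i(t-s)\Delta}(1-\chi)\mc N(|u|)u\,ds$. So there is no way to check ``piece by piece'' that these contributions are covered by it.

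The missing idea is to abandon the backward Duhamel formula for this term.
\begin{itemize}
\item Insert $B_j(t)$, the product of the two $t^\alpha$-scale cutoffs at $v_j^\parallel+\epsilon_3$ and $v_{j+1}^\parallel-\epsilon_3$. It is $\equiv1$ on $\supp\Xi_j$, and $\operatorname{dist}(\supp\Xi_j,\supp(1-B_j))\gtrsim t$. Hence $\Xi_jQ_{\nr,j}\overline F(\frac{|D^\parallel|}{\sqrt t})(1-B_j)$ is negligible by off-diagonal kernel decay, and it suffices to show $\|e^{it\Delta}Q_{\nr,j}B_j(t)u(t)\|_{L^2}\to0$.
\item This family converges weakly to $0$, by \eqref{eq:subconic-asymptotic-velocity} and the disjointness of the limiting interval $I_j$ from $\supp Q_{\nr,j}$.
\item It has a strong limit, by Cook's method run \emph{forward} on $[t,T_1]$. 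The bubbles never appear as sources here: the nonlinear term is $B_j\mc N(|u|)u$, and $B_j$ lies where \eqref{eq:input-L2-2} holds (since $\epsilon_3>\epsilon_1$).
\end{itemize}

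The edge terms in the Cook derivative are $s^{-\alpha}(D^\parallel-c/2)\odot F'(b)$. They are closed by the factorization $Q_{\nr,j}=\tilde Q^\pm_{\nr,j}(D^\parallel-c/2)$ with $\tilde Q^\pm_{\nr,j}$ bounded, which is possible because $\epsilon_3>\epsilon_2$. The paper places $Q_{\nr,j}$ to the left of $B_j$ precisely so that the leading term becomes $s^{-\alpha}(D^\parallel-c/2)F'(b)(D^\parallel-c/2)$. Cauchy--Schwarz then uses \eqref{eq:3d-propagation-integral} for $u$ \emph{and} \eqref{eq:3d-free-propagation-integral} for $e^{-is\Delta}\tilde Q^\pm_{\nr,j}\phi$. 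The $\frac{\alpha}{s}F'(b)(D^\parallel-c/2)$ term is handled the same way with the integrable weight $s^{-2+\alpha}$.

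Your sketch invokes only the $u$-side estimate. It is the factorization together with the free estimate on the test-function side that makes the Cook tail small uniformly in $T_1$ and $\|\phi\|_{L^2}=1$. That uniformity is what gives a strong limit, which must then vanish because the weak limit is $0$.
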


\begin{proof}
The first statement, \eqref{eq:IIb-vanish-1-multi}, is just a restatement of \eqref{eq:IIb-vanish-1}. 

The second statement, with the `resonant' terms, is proved in exactly the same way as was \eqref{eq:one-bubble-resonant-scattering-1}. This is literally true for $j= 0, N$ where the resonant set has the same form (apart from the nonzero velocity of the bubble, which is inconsequential). For, while the support of $1-\chi$ is bigger in this case (it consists of $N$ cones instead of only one), in fact, as noted in Remark~\ref{rem:half space}, the proof of non-stationarity of the phase is valid not only when the $y$ coordinate is in the bubble, but also when it is on the opposite side of the bubble compared to the $x$ coordinate. This observation also shows that the proof adapts immediately to the intermediate cases, $j = 1 \dots N-1$. 

However, for the nonresonant terms in \eqref{eq:3d-outer-mid-vanishes-1-multi}, this is no longer true. Essentially this is because the resonant set is smaller in the intermediate cases ($j = 1 \dots N-1$), compared to the outer cases ($j = 0, N$) or to the one-bubble case, because in the intermediate cases $\xi^\parallel$ has a two-sided bound on the support of $q_{\res, j}$, instead of just a one-sided bound. This makes the resonant case easier, but the non-resonant case more difficult. We need an additional idea. 

The additional idea is to introduce an additional, non-conic cutoff $B_j$ on the right-hand side of the nonresonant cutoff $Q_{\nr, j}$, which cuts off more sharply than does a conic cutoff. It allows us to use Proposition~\ref{prop:3d-propagation}; see Remark~\ref{rem:sharp cutoff} for a heuristic discussion on the benefit of the sharper cutoff. We define
\begin{equation}\begin{gathered}
B_0 = F \Big( -\frac{x^\parallel - (v_1^\parallel - \epsilon_3) t }{t^\alpha} \Big), \\
B_{j} = F \Big( \frac{x^\parallel - (v_j^\parallel + \epsilon_3) t}{t^\alpha}  \Big) F \Big( -\frac{x^\parallel - (v_{j+1}^\parallel - \epsilon_3) t}{t^\alpha} \Big), \quad j = 1 \dots N-1,  \\
B_N = F \Big( \frac{x^\parallel - (v_N^\parallel + \epsilon_3) t }{t^\alpha} \Big).
\end{gathered}\end{equation}
We collect some elementary properties of $B_j$: 

\begin{lemma}\label{lem:B properties}
\hfill
\begin{itemize} 
\item For all $0 \leq j \leq N$, and for all sufficiently large times, $B_j$ is identically $1$ on the support of $\Xi_j$, and the distance between the supports of $\Xi_j$ and $\nabla B_j$ is bounded below by $c t$ as $t \to \infty$. 
\item Each $B_j$ is supported in the region where the Strichartz estimate \eqref{eq:input-L2-2} holds. (This is the reason for offseting the longitudinal velocities $v_j^\parallel$ by $\epsilon_3 > \epsilon_0$ in the definition of $B_j$.) 
\item Assume that $\epsilon_1$ is sufficiently small, and $t$ is sufficiently large. Then for $1 \leq j \leq N-1$, the gradient $\nabla B_j$ is equal to 
\begin{equation}
\nabla \Big( F \Big( \frac{x^\parallel - (v_j^\parallel + \epsilon_3) t}{t^\alpha}  \Big) \Big) + \nabla \Big( F \Big( -\frac{x^\parallel - (v_{j+1}^\parallel - \epsilon_3) t}{t^\alpha} \Big) \Big),
\end{equation}
i.e. each of the two $F$ factors is identically $1$ on the support of the gradient of the other. 
\item Given the dispersion relation $x/t = 2\xi$ for the free Schr\"odinger equation, the function $\nabla B_j$ has disjoint support from $Q_{\nr, j}$; that is, $\nabla B_j$ is resonant in terms of the decomposition $\Id = Q_{\res, j} + Q_{\nr, j}$. 
\end{itemize}
\end{lemma}
We omit the proofs of these properties, which are all easily checked, but mention that the first property holds since $\epsilon_3 < \epsilon_4$, while the second holds since $\epsilon_3 > \epsilon_1$ and the fourth holds since $\epsilon_3 > \epsilon_2$. 

We return to the proof of \eqref{eq:3d-outer-mid-vanishes-1-multi}. We write 
\begin{equation}
\Xi_j(t)Q_{\nr, j}\overline{F}(\frac{|D^\parallel|}{\sqrt t}) u(t) = \Xi_j(t)Q_{\nr, j}\overline{F}(\frac{|D^\parallel|}{\sqrt t})B_j(t) u(t) + \Xi_j(t)Q_{\nr, j}\overline{F}(\frac{|D^\parallel|}{\sqrt t}) (1 - B_j(t)) u(t). 
\end{equation}
It is not difficult to see that the second term on the right hand side tends to zero in $L^2(\R^3)$ as $t \to \infty$. We use the first property in Lemma~\ref{lem:B properties}, to see that the distance between the supports of $\Xi_j(t)$ and $(1 - B_j(t))$ is bounded below by $c t$ as $t \to \infty$. Since the intermediate operator $Q_{\nr, j}\overline{F}(\frac{|D^\parallel|}{\sqrt t})$ is pseudodifferential (in $x^\parallel$), its Schwartz kernel is smooth away from the partial diagonal in $x^\parallel$, and rapidly decreasing as a function of the distance from it. (Notice that although the operator $\overline{F}(\frac{|D^\parallel|}{\sqrt t})$ depends on $t$, it is in the semiclassical calculus with parameter $h = t^{-1/2}$. As such, its Schwartz kernel decays away from the diagonal uniformly --- indeed, more and more rapidly --- as $t \to \infty$. An alternative argument is to remove this factor, which makes at most $O(t^{-1/2})$ difference to the $L^2$ norm thanks to \eqref{eq:IIb-vanish}.) Schur's test shows that the operator $\Xi_j(t)Q_{\nr, j}\overline{F}(\frac{|D^\parallel|}{\sqrt t}) (1 - B_j(t))$ tends to zero rapidly as $t \to \infty$, so we can discard this term and analyze the first term, with the $B_j$ factor. 

Next, to show that the first term on the right hand side tends to zero, it certainly suffices to show that 
\begin{equation*}
    \big\| Q_{\nr, j} B_j(t) u(t) \big\|_{L^2(\R^3)} \to 0
\end{equation*}
(remembering that the two frequency cutoffs $Q_{\nr, j}$ and $\overline{F}(\frac{|D^\parallel|}{\sqrt t})$ commute), so we turn to proving \eqref{eq:wts}. Clearly, it suffices to show that 
\begin{equation}\label{eq:wts}
    \big\| e^{it\Delta} Q_{\nr, j} B_j(t) u(t) \big\|_{L^2(\R^3)} \to 0.
\end{equation}
This will follow from the following Lemma.

\begin{lemma} \label{lem:nonres multi}
(i) The family of functions $e^{it\Delta} Q_{\nr, j} B_j(t) u(t)$ converges to zero weakly in $L^2$, as $t \to \infty$. 

(ii) $e^{it\Delta} Q_{\nr, j} B_j(t) u(t)$ has a strong limit in $L^2$, as $t \to \infty$. 
\end{lemma}

\begin{proof}[Proof of Lemma]
(i) Since $Q_{\nr,j}$ commutes with $e^{is\Delta}$,
\begin{align}
e^{it\Delta}Q_{\nr,j}B_j(t)u(t)=Q_{\nr,j}(e^{it\Delta}B_j(t)e^{-it\Delta})(e^{it\Delta}u(t)). \label{eq:3d-weak-factorisation}
\end{align}
Lemma~\ref{lem:asymptotic-velocity} gives
\begin{align}
\mathrm{s}\text{-}\lim_{t\to\infty}e^{it\Delta}B_j(t)e^{-it\Delta}=\mathbf 1_{\{2D_\parallel \in I_j\}}, \label{eq:3d-outer-conjugated-limit}
\end{align}
where $I_j$ is the interval 
\begin{equation}
    \begin{cases}
  (-\infty, v_1^\parallel - \epsilon_3] \text{ for } j=0, \\
  [v_j^\parallel + \epsilon_3, v_{j+1}^\parallel - \epsilon_3] \text{ for } 1 \leq j \leq N-1, \\
  [v_N^\parallel + \epsilon_3, \infty) \text{ for } j=N. 
    \end{cases}
\end{equation}
On the other hand, $Q_{\nr, j}$ is supported where $2D^\parallel$ is in the set
\begin{equation}
    \begin{cases}
  [v_1^\parallel - \epsilon_2, \infty)      \text{ for } j=0, \\
  (-\infty, v_j^\parallel + \epsilon_2] \cup [v_{j+1}^\parallel - \epsilon_2, \infty) \text{ for } 1 \leq j \leq N-1, \\
  (-\infty, v_N^\parallel + \epsilon_2] \text{ for } j=N.
    \end{cases}
\end{equation}
In every case, the limiting projection is disjoint from the frequency support of $Q_{\nr, j}$. Hence both $Q_{\nr,j} e^{it\Delta}B_j(t)e^{-it\Delta}$ and its adjoint converge strongly to zero. Therefore, for any $\phi \in L^2$, we have
\begin{equation}\begin{gathered}
    \bang{\phi, e^{it\Delta}Q_{\nr,j}B_j(t)u(t)} = 
    \bang{e^{it\Delta} B_j(t) e^{-it\Delta} Q_{\nr, j} \phi, e^{it\Delta} u(t)}
\end{gathered}
\end{equation}
and the left hand side of the inner product tends to zero strongly, while the right hand side is uniformly bounded. This inner product therefore converges to zero for every $\phi$, proving (i). 

(ii) We use Cook's method and show that the integral of the derivative has tail tending to zero in $L^2$. 
For finite $T_1>t$, differentiate $e^{is\Delta}Q_{\nr,j}B_j(s)u(s)$ on $[t,T_1]$. Since $Q_{\nr,j}$ commutes with the free group,
\begin{align}
\begin{split}
\partial_s(e^{is\Delta}Q_{\nr,j}B_j(s)u(s)) = &e^{is\Delta}Q_{\nr,j}(\partial_sB_j(s)+i[\Delta,B_j(s)])u(s)
\\ & -ie^{is\Delta}Q_{\nr,j}B_j(s)\mc{N}(|u(s)|)u(s).     
\end{split}
\label{eq:3d-L-derivative-expansion}
\end{align}
Integrating this identity on $[t,T_1]$ gives
\begin{align*}
\begin{split}
e^{it\Delta}Q_{\nr,j}B_j(t)u(t)=&e^{iT_1\Delta}Q_{\nr,j}B_j(T_1)u(T_1)-\int_t^{T_1}e^{is\Delta}Q_{\nr,j}\Big(\partial_sB_j(s)
+i[\Delta,B_j(s)]\Big)u(s)\,ds \\
&+i\int_t^{T_1}e^{is\Delta}Q_{\nr,j}B_j(s)\mc{N}(|u(s)|)u(s)\,ds.
\end{split} \label{eq:3d-L-terminal-duhamel}
\end{align*}
We start with the last term on the right. Because the support of $B_j$ lies in a conic region in which the Strichartz estimate holds, according to the second item of Lemma~\ref{lem:B properties}, we can bound this term in norm by the $L^2([t, \infty)_s; L^{6/5}(\R^3))$ norm of $B_j(s)\mc{N}(|u(s)|)u(s)$, which tends to zero as $t \to \infty$. 

For the remaining terms, we pair with an arbitrary $\phi \in L^2$.
Then part (i) of the Lemma shows, as $T_1 \to \infty$, we have 
\begin{align*}
   \bang{\phi, e^{iT_1\Delta}Q_{\nr,j}B_j(T_1)u(T_1)} \to 0.
\end{align*}

So it suffices to show, uniformly in $T_1$:
\begin{equation}\label{eq:stronglimit}
\sup_{\| \phi \|_{L^2} = 1} 
\Big|\bang{\phi,\int_t^{T_1}e^{is\Delta}Q_{\nr,j}\Big(\partial_sB_j(s)
+i[\Delta,B_j(s)]\Big)u(s)\,ds}\Big| \to 0.
\end{equation}

In the course of the proof of Proposition~\ref{prop:3d-propagation} we computed the Heisenberg derivative of $B_j$. Using item 3 of Lemma~\ref{lem:B properties} we find that, with 
$$
b_j^{\pm} = \pm \frac{x^\parallel - (v_j^\parallel \pm \epsilon_3) t }{t^\alpha}, 
$$
\begin{equation}\label{eq:Heisenberg1}
\partial_sB_0(s)+i[\Delta,B_0(s)]= -\frac1{s^\alpha}\big(D_\parallel - (v_1^\parallel - \epsilon_3)/2 \big)\odot F'(b_1^-)-\frac\alpha s b_1^-F'(b_1^-),
\end{equation}
while for $1 \leq j \leq N-1$ we have 
\begin{multline}\label{eq:Heisenberg2}
\partial_sB_j(s)+i[\Delta,B_j(s)]=    \frac1{s^\alpha}\big(D_\parallel - (v_j^\parallel + \epsilon_3)/2 \big)\odot F'(b_j^+)-\frac\alpha s b_j^+F'(b_j^+) \\ -\frac1{s^\alpha}\big(D_\parallel - (v_{j+1}^\parallel - \epsilon_3)/2 \big)\odot F'(b_{j+1}^-)-\frac\alpha s b_{j+1}^-F'(b_{j+1}^-),
\end{multline}
and for $j=N$
\begin{equation}\label{eq:Heisenberg3}
 \partial_sB_N(s)+i[\Delta,B_N(s)]=    \frac1{s^\alpha}\big(D_\parallel - (v_N^\parallel + \epsilon_3)/2 \big)\odot F'(b_N^+)-\frac\alpha s b_N^+F'(b_N^+). 
\end{equation}
We further observe that, due to the fourth item in Lemma~\ref{lem:B properties}, we can write 
\begin{equation}\begin{aligned}
Q_{\nr, 0} &= \tilde Q^-_{\nr, 0} \big(D^\parallel - (v_1^\parallel - \epsilon_3)/2 \big); \\
Q_{\nr, j} &= \tilde Q^+_{\nr, j} \big(D^\parallel - (v_j^\parallel + \epsilon_3)/2\big), \quad j = 1 \dots N-1; \\
Q_{\nr, j} &= \tilde Q^-_{\nr, j} \big(D^\parallel - (v_{j+1}^\parallel - \epsilon_3)/2\big), \quad j = 1 \dots N-1; \\
Q_{\nr, N} &= \tilde Q^+_{\nr, N} \big(D^\parallel - (v_N^\parallel + \epsilon_3)/2\big)
\end{aligned}\end{equation}
for bounded Fourier multipliers $\tilde Q_{\nr, j}^\pm$, since the symbols of $Q_{\nr, j}$ vanish at the vanishing set of the corresponding first order differential operator. Combined with \eqref{eq:Heisenberg1} --- \eqref{eq:Heisenberg3} we can express the operator $Q_{\nr,j}(\partial_sB_j(s)
+i[\Delta,B_j(s)])$ as a sum of either one term (for $j = 0$ or $N$) or two terms (for $1 \leq j \leq N-1$) of the form 
\begin{equation}\label{eq:pos term}
\tilde Q^\pm_{\nr, j} \Big( \frac1{s^\alpha} (D^\parallel - c/2) F'(b) (D^\parallel - c/2) + \frac{\alpha}{s} F'(b) (D^\parallel - c/2) + O(s^{-2\alpha}) (D^\parallel - c/2) \Big)
\end{equation}
where $c$ is of the form $v_k^\parallel \pm \epsilon_3$, $k = j$ or $j+1$, and $b$ is the corresponding $b_k^\pm$. The $O(s^{-2\alpha})$ comes from commuting the derivative through the $F'$ factor. We recognize the first term in \eqref{eq:pos term} as the square of the operator that appears in the propagation estimate, \eqref{eq:3d-propagation-integral}. Since $\alpha > 1/2$, the $O(s^{-2\alpha})$ term is integrable in $s$ so contributes a term that goes to zero as $t \to \infty$.   Returning to the integral term in \eqref{eq:stronglimit}, we need to estimate 
\begin{equation}\label{eq:expand}
   \int_t^{T_1} \!\! \bang{e^{-is\Delta} \tilde Q^\pm_{\nr, j}\phi, \Big( s^{-\alpha}(D^\parallel - c/2) F'(b) (D^\parallel - c/2) + \frac{\alpha}{s} F'(b) (D^\parallel - c/2) + O(s^{-2\alpha}) (D^\parallel - c/2) \Big) u(s)}  ds.
\end{equation}
The first, positive term is estimated by moving the square root of the operator to the left hand side and using Cauchy-Schwarz:
\begin{multline}
   \bigg| \int_t^{T_1} \bang{e^{-is\Delta} \tilde Q^\pm_{\nr, j}\phi,  s^{-\alpha}(D^\parallel - c/2) F'(b) (D^\parallel - c/2)   u(s)} \, ds \bigg|  \\ \leq 
  \bigg(  \int_t^{T_1} s^{-\alpha} \| \sqrt{F'} (D^\parallel - c/2) e^{-is\Delta} \tilde Q^\pm_{\nr, j}\phi \|_{L^2}^2 \, ds \bigg)^{1/2} 
  \bigg(  \int_t^{T_1} s^{-\alpha} \| \sqrt{F'} (D^\parallel - c/2) u(s) \|_{L^2}^2 \, ds \bigg)^{1/2} 
\end{multline}
The first bracket on the right is controlled by $C (\| \tilde Q^\pm_{\nr, j}\phi \|_{L^2} + \| D_c \tilde Q^\pm_{\nr, j} \phi \|_{L^2}) $ according to \eqref{eq:3d-free-propagation-integral}. Since both $\tilde Q^\pm_{\nr, j}$ and $D_c \tilde Q^\pm_{\nr, j} = Q_{\nr, j}$ are bounded Fourier multipliers, this is bounded by $C \| \phi \|_{L^2}$. According to \eqref{eq:3d-propagation-integral}, the second bracket is finite and tends to zero as $t \to \infty$. Taking the supremum over all $\phi \in L^2$ with $\|\phi\|_{L^2}=1$, this tends to zero, uniformly in $T_1$. 

We now consider the second term in \eqref{eq:expand}. This is estimated using Cauchy-Schwarz again: 
\begin{multline}
  \bigg| \int_t^{T_1} \bang{e^{-is\Delta} \tilde Q^\pm_{\nr, j}\phi,  \frac{\alpha}{s} F'(b) (D^\parallel - c/2)   u(s)} \, ds \bigg|  \\ \leq 
  \bigg(  \int_t^{T_1} s^{-2 + \alpha} \|  e^{-is\Delta} \tilde Q^\pm_{\nr, j}\phi \|_{L^2}^2 \, ds \bigg)^{1/2} 
  \bigg(  \int_t^{T_1} s^{-\alpha} \| \sqrt{F'} (D^\parallel - c/2) u(s) \|_{L^2}^2 \, ds \bigg)^{1/2}    
\end{multline}
It is clear that the first factor on the RHS is uniformly bounded as the power $s^{-2 + \alpha}$ is integrable in time. The second factor is controlled by \eqref{eq:3d-propagation-integral}. So the RHS goes to zero, uniformly in $T_1$ and uniformly for all $\phi$ with $\| \phi \|_{L^2} = 1$. This completes the proof of part (ii) of the Lemma. 
\end{proof}
The combination of (i) and (ii) of Lemma~\ref{lem:nonres multi} proves \eqref{eq:wts}, since the weak and strong limits coincide. This is the first part of showing \eqref{eq:3d-outer-mid-vanishes-1-multi}. The second part is proved using the same argument applied to the free flow instead of the nonlinear flow \eqref{eq:nls}, and is slightly easier as there is no nonlinear term to estimate; we omit the details. This completes the proof of Proposition~\ref{prop:multi-bubble-directional-scattering}. 
    
\end{proof}

\begin{figure}[htbp]
    \centering 
    \includegraphics[width=0.8\textwidth]{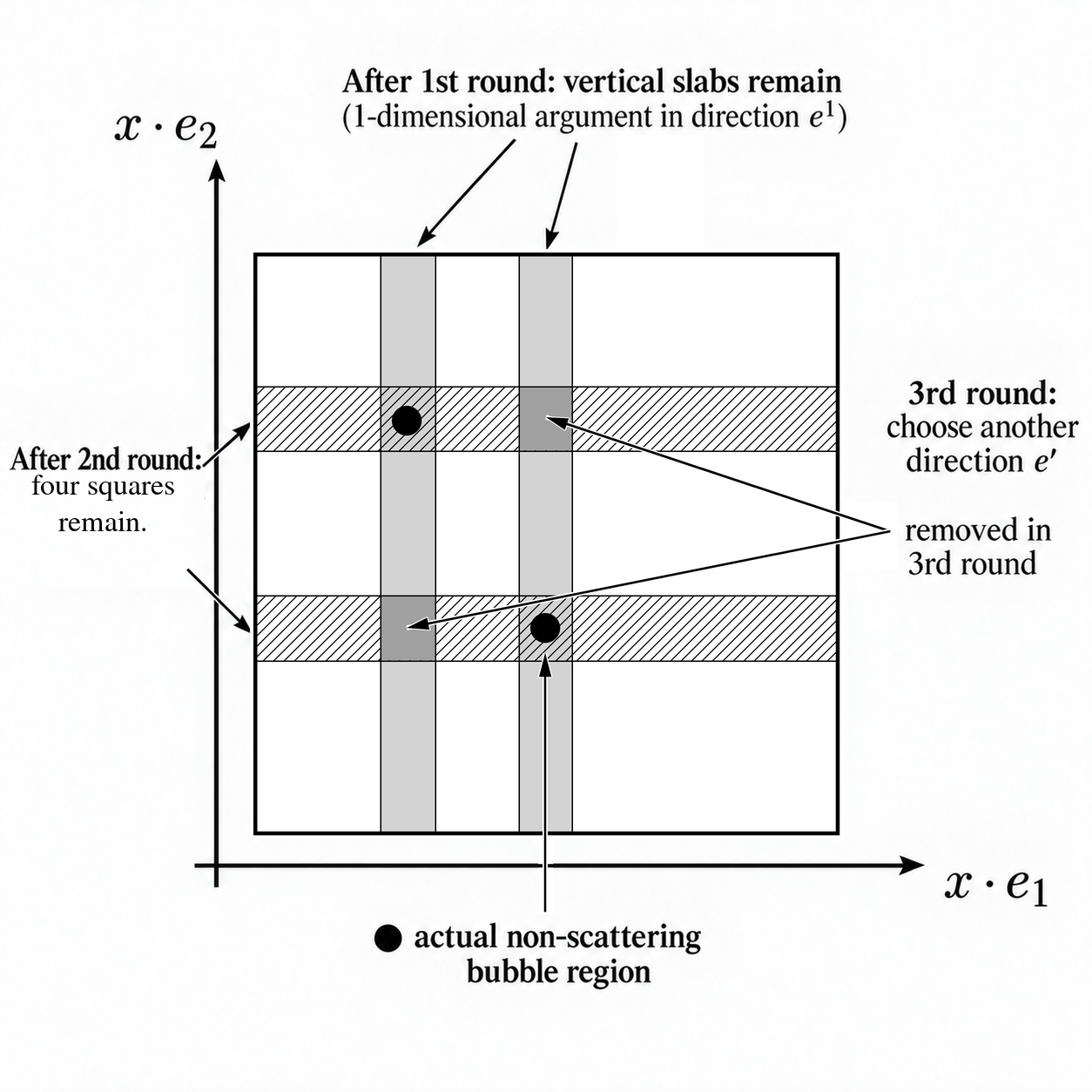} 
    \caption{An illustration of the final step of the proof in two dimensions. After using two orthogonal directions $e^1$ and $e^2$, the non-scattering region is shown to be a subset of 4 squares, of which two contain the velocities of the two bubbles, indicated with a bullet, and two are spurious. We can exclude the two spurious squares by taking a third direction $e^*$ so that the spurious cubes are disjoint from the $e^*$-slabs generated by the two bubbles. For example here we could take $e^* = 1/\sqrt{2} (e^1 + e^2)$.}  
    \label{fig:cubes} 
\end{figure}

\subsection{Proof of Theorem~\ref{thm:main}} Proposition~\ref{prop:multi-bubble-directional-scattering} shows scattering outside $N$ thin slabs located, in the longitudinal direction, near the $N$ bubbles. We call this set, outside of which we have scattering, the $e$-excluded set. By taking other suitable directions and applying Proposition~\ref{prop:multi-bubble-directional-scattering}, we increase the size of the region on which we know we have scattering, thanks to Lemma~\ref{lem:union of cones}. We show that we can show scattering for all velocities $v = x/t$ satisfying $|v - v_j| \geq 2\epsilon$ for all $j = 1 \dots N$.

Let us begin by observing that the set of `unsuitable' directions $\tilde e$, that is, directions which do \emph{not} separate the $N$ velocities $v_j$, in the sense that $v_j \cdot \tilde e$ are not all distinct, consists of at most $N(N-1)/2$ great circles contained in $S^2$.  Indeed, there are $N(N-1)/2$ pairs of velocities, and for each pair, the set of directions that do not separate the pair is the set of directions orthogonal to the difference of the two velocities, which is a great circle. Thus the set of suitable directions, that \emph{do} separate all the velocities in the sense above, is the complement of a codimension 1 analytic set, in particular open and dense. 

It's easy to see that we can find 3 mutually orthogonal suitable directions. After a rotation of coordinates we may, without loss of generality, suppose these are the three standard basis vectors. Let us denote these $e^1, e^2, e^3$. Then applying the argument above to each $e^i$, we have scattering except for those directions $x/t$ which are in the intersection of the $e^j$-excluded sets, for $j = 1 \dots 3$.  This is a union of $N^3$ cubes of side length $2\epsilon_4$. See Figure~\ref{fig:cubes} for an illustration of this setup in two dimensions.  
Of these $N^3$ cubes, $N$ contain one of the velocities $v_j$, and are contained in the ball of radius $\sqrt{3}\epsilon_4 < 2 \epsilon$ around $v_j$, provided we choose $\epsilon_4 < (2/\sqrt{3}) \epsilon$. The rest of the cubes are `spurious' and for each spurious cube $Q$, we look for a direction $e^*$ which shows that scattering holds for $x/t \in Q$. 

Provided that $\epsilon$ is small relative to the difference of velocities, $|v_i - v_j|$, we can always find such a $e^*$. Indeed, consider adding the centre $v_{N+1}$ of $Q$ to the original $N$ velocities. Then we can find a direction $e^*$ such that $v_j \cdot e^*$ are distinct, for $1 \leq j \leq N+1$, by the argument above. Using $e^*$ as the longitudinal direction, the $e^*$-slab meeting $Q$ will be disjoint from the $e^*$-slab meeting any of the velocities $v_1 \dots v_N$, provided $\epsilon$ is small enough. Thus Proposition~\ref{prop:multi-bubble-directional-scattering}, with $e^*$ as the longitudinal direction, shows that scattering occurs on $Q$.

As this argument applies to each spurious cube, this shows scattering occurs outside the $N$ cubes of side length $2\epsilon_4$ around the velocities $v_j$. We have already observed that these cubes are contained in the ball of radius $2\epsilon$ about $v_j$, so the proof is complete.

\bibliographystyle{amsplain}
\bibliography{refs}

\end{document}